\title{A new approach to formal moduli problems}
\author{Brice Le Grignou \quad Victor Roca i Lucio}
\address{Brice Le Grignou,}
\email{\href{mailto:bricelegrignou@gmail.com}{bricelegrignou@gmail.com}}
\address{Victor Roca i Lucio, Ecole Polytechnique Fédérale de Lausanne, EPFL,
CH-1015 Lausanne, Switzerland}
\email{\href{mailto:victor.rocalucio@epfl.ch}{victor.rocalucio@epfl.ch}}
\date{\today}
\subjclass[2020]{18M70, 18N40, 14D15, 14D23}
\keywords{Infinitesimal deformation problems, formal moduli problems, algebraic operads, deformation theory, homotopical algebra.}
\newtheorem{theoremintro}{Theorem}
\begin{document}

\theoremstyle{plain}
\newtheorem{theorem}{Theorem}
\newtheorem*{theorem*}{Theorem}
\newtheorem{lemma}{Lemma}
\newtheorem{proposition}{Proposition}
\newtheorem{assumption}{Assumption}
\newtheorem{corollary}{Corollary}

\theoremstyle{definition}
\newtheorem{definition}{Definition}

\theoremstyle{remark}
\newtheorem{remark}{\sc Remark}
\newtheorem{example}{\sc Example}
\newtheorem*{notation}{\sc Notation}
\newtheorem*{warning}{\sc Warning}

\newcommand{\draftnote}[1]{\marginpar{\raggedright\textsf{\hspace{0pt} \tiny #1}}}
\newcommand{\ac}{{\scriptstyle \text{\rm !`}}}
\newcommand{\Ch}{\categ{Ch}}
\newcommand{\dgmod}{\Ch}

\newcommand{\qi}{\xrightarrow{ \,\smash{\raisebox{-0.65ex}{\ensuremath{\scriptstyle\sim}}}\,}}
\newcommand{\lqi}{\xleftarrow{ \,\smash{\raisebox{-0.65ex}{\ensuremath{\scriptstyle\sim}}}\,}}

\newcommand{\Cate}{\categ{Cat}_{\categ E}}
\newcommand{\Catesmall}{\categ{Cat}_{\categ E, \mathrm{small}}}
\newcommand{\Operade}{\Operad_{\categ E}}
\newcommand{\Operadecprime}{\Operad_{\categ E}}
\newcommand{\Operadesmall}{\Operad_{\categ E, \mathrm{small}}}
\newcommand{\eII}{\mathcal{I}}
\newcommand{\ecateg}[1]{\mathcal{#1}}
\newcommand{\cmonlax}{\categ{CMon}_\lax}
\newcommand{\cmonoplax}{\categ{CMon}_\oplax}
\newcommand{\cmonstrong}{\categ{CMon}_\strong}
\newcommand{\cmonstrict}{\categ{CMon}_\strict}
\newcommand{\cat}{\mathrm{cat}}
\newcommand{\lax}{\mathrm{lax}}
\newcommand{\oplax}{\mathrm{oplax}}
\newcommand{\strong}{\mathrm{strong}}
\newcommand{\strict}{\mathrm{strict}}
\newcommand{\pl}{\mathrm{pl}}
\newcommand{\Comonads}{\mathsf{Comonads}}
\newcommand{\Monads}{\mathsf{Monads}}
\newcommand{\Cats}{\mathsf{Cats}}
\newcommand{\Functors}{\mathsf{Functors}}
\newcommand{\Mor}{\mathsf{Mor}}
\newcommand{\Nat}{\mathsf{Nat}}
\newcommand{\sk}{\mathrm{sk}}
\newcommand{\TwoFun}{\text{2-Fun}}
\newcommand{\Ob}{\mathrm{Ob}}
\newcommand{\tr}{\mathrm{tr}}
\newcommand{\catch}{\mathsf{Ch}}
\newcommand{\categ}[1]{\mathsf{#1}}
\newcommand{\set}[1]{\mathrm{#1}}
\newcommand{\catoperad}[1]{\mathsf{#1}}
\newcommand{\operad}[1]{\mathcal{#1}}
\newcommand{\algebra}[1]{\mathrm{#1}}
\newcommand{\coalgebra}[1]{\mathrm{#1}}
\newcommand{\cooperad}[1]{\mathcal{#1}}
\newcommand{\ocooperad}[1]{\overline{\mathcal{#1}}}
\newcommand{\catofmod}[1]{{#1}\mathrm{-}\mathsf{mod}}
\newcommand{\catofcog}[1]{#1\mathrm{-}\mathsf{cog}}
\newcommand{\catcog}[1]{\categ{dg}\text{-}{#1}\text{-}\categ{cog}}
\newcommand{\catalg}[1]{\categ{dg}\text{-}{#1}\text{-}\categ{alg}}
\newcommand{\catofcolcomod}[1]{\mathsf{Col}\mathrm{-}{#1}\mathrm{-}\mathsf{comod}}
\newcommand{\catofcoalgebra}[1]{{#1}\mathrm{-}\mathsf{cog}}
\newcommand{\catofalg}[1]{\operad{#1}\mathrm{-}\mathsf{alg}}
\newcommand{\catofalgebra}[1]{{#1}\mathrm{-}\mathsf{alg}}
\newcommand{\mbs}{\mathsf{S}}
\newcommand{\catocol}[1]{\mathsf{O}_{\set{#1}}}
\newcommand{\catoftrees}{\mathsf{Trees}}
\newcommand{\cattcol}[1]{\catoftrees_{\set{#1}}}
\newcommand{\catcorcol}[1]{\mathsf{Corol}_{\set{#1}}}
\newcommand{\Einfty}{\mathcal{E}_{\infty}}
\newcommand{\nuEinfty}{\mathcal{nuE}_{\infty}}
\newcommand{\Fun}[3]{\mathrm{Fun}^{#1}\left(#2,#3\right)}
\newcommand{\III}{\operad{I}}
\newcommand{\treeoperad}{\mathbb{T}}
\newcommand{\treemodule}{\mathbb{T}}
\newcommand{\core}{\mathrm{Core}}
\newcommand{\forget}{\mathrm{U}}
\newcommand{\treemonad}{\mathbb{O}}
\newcommand{\cogcomonad}[1]{\mathbb{L}^{#1}}
\newcommand{\cofreecog}[1]{\mathrm{L}^{#1}}

\newcommand{\barfunctor}[1]{\mathrm{B}_{#1}}
\newcommand{\baradjoint}[1]{\mathrm{B}^\dag_{#1}}
\newcommand{\cobarfunctor}[1]{\mathrm{C}_{#1}}
\newcommand{\cobaradjoint}[1]{\mathrm{C}^\dag_{#1}}
\newcommand{\Operad}{\mathsf{Operad}}
\newcommand{\coOperad}{\mathsf{coOperad}}

\newcommand{\Aut}[1]{\mathrm{Aut}(#1)}

\newcommand{\verte}[1]{\mathrm{vert}(#1)}
\newcommand{\edge}[1]{\mathrm{edge}(#1)}
\newcommand{\leaves}[1]{\mathrm{leaves}(#1)}
\newcommand{\inner}[1]{\mathrm{inner}(#1)}
\newcommand{\inp}[1]{\mathrm{input}(#1)}

\newcommand{\field}{\mathbb{K}}
\newcommand{\mbk}{\mathbb{K}}
\newcommand{\mbn}{\mathbb{N}}

\newcommand{\id}{\mathrm{Id}}
\newcommand{\ii}{\mathrm{id}}
\newcommand{\unit}{\mathds{1}}

\newcommand{\Lin}{Lin}

\newcommand{\BijC}{\mathsf{Bij}_{C}}

\newcommand{\kk}{\Bbbk}
\newcommand{\PP}{\mathcal{P}}
\newcommand{\C}{\mathcal{C}}
\newcommand{\Sy}{\mathbb{S}}
\newcommand{\Tree}{\mathsf{Tree}}
\newcommand{\treemod}{\mathbb{T}}
\newcommand{\Dend}{\Omega}
\newcommand{\aDend}{\Omega^{\mathsf{act}}}
\newcommand{\cDend}{\Omega^{\mathsf{core}}}
\newcommand{\cDendpart}{\cDend_{\mathsf{part}}}

\newcommand{\build}{\mathrm{Build}}
\newcommand{\col}{\mathrm{col}}

\newcommand{\HOM}{\mathrm{HOM}}
\newcommand{\Hom}[3]{\mathrm{hom}_{#1}\left(#2 , #3 \right)}
\newcommand{\ov}{\overline}
\newcommand{\otimeshadamard}{\otimes_{\mathbb{H}}}

\newcommand{\Aa}{\mathcal{A}}
\newcommand{\BB}{\mathcal{B}}
\newcommand{\CC}{\mathcal{C}}
\newcommand{\DD}{\mathcal{D}}
\newcommand{\EE}{\mathcal{E}}
\newcommand{\FF}{\mathcal{F}}
\newcommand{\II}{\mathbb{1}}
\newcommand{\RR}{\mathcal{R}}
\newcommand{\UU}{\mathcal{U}}
\newcommand{\VV}{\mathcal{V}}
\newcommand{\WW}{\mathcal{W}}
\newcommand{\AAA}{\mathscr{A}}
\newcommand{\BBB}{\mathscr{B}}
\newcommand{\CCC}{\mathscr{C}}
\newcommand{\DDD}{\mathscr{D}}
\newcommand{\EEE}{\mathscr{E}}
\newcommand{\FFF}{\mathscr{F}}

\newcommand{\PPP}{\mathscr{P}}
\newcommand{\QQQ}{\mathscr{Q}}

\newcommand{\QQ}{\mathcal{Q}}

\newcommand{\KKK}{\mathscr{K}}
\newcommand{\KK}{\mathcal{K}}

\newcommand{\ra}{\rightarrow}

\newcommand{\Ai}{\mathcal{A}_{\infty}}
\newcommand{\uAi}{u\mathcal{A}_{\infty}}
\newcommand{\uEinfty}{u\mathcal{E}_{\infty}}
\newcommand{\uAW}{u\mathcal{AW}}

\newcommand{\uAlg}{\mathsf{Alg}}
\newcommand{\nuAlg}{\mathsf{nuAlg}}
\newcommand{\cAlg}{\mathsf{cAlg}}

\newcommand{\ucAlg}{\mathsf{ucAlg}}
\newcommand{\Cog}{\mathsf{Cog}}
\newcommand{\nuCog}{\mathsf{nuCog}}
\newcommand{\uAWcog}{u\mathcal{AW}-\mathsf{cog}}

\newcommand{\uCog}{\mathsf{uCog}}
\newcommand{\cCog}{\mathsf{cCog}}
\newcommand{\ucCog}{\mathsf{ucCog}}
\newcommand{\cNilCog}{\mathsf{cNilCog}}
\newcommand{\ucNilCog}{\mathsf{ucNilCog}}
\newcommand{\NilCog}{\mathsf{NilCog}}

\newcommand{\Cocom}{\mathsf{Cocom}}
\newcommand{\uCocom}{\mathsf{uCocom}}
\newcommand{\NilCocom}{\mathsf{NilCocom}}
\newcommand{\uNilCocom}{\mathsf{uNilCocom}}
\newcommand{\Liealg}{\mathsf{Lie}-\mathsf{alg}}
\newcommand{\cLiealg}{\mathsf{cLie}-\mathsf{alg}}
\newcommand{\Alg}{\mathsf{Alg}}
\newcommand{\Linfty}{\mathcal{L}_{\infty}}
\newcommand{\CMC}{\mathfrak{CMC}}
\newcommand{\Tfree}{\mathbb{T}}

\newcommand{\Hinich}{\mathsf{Hinich} -\mathsf{cog}}

\newcommand{\Ccomod}{\mathscr C -\mathsf{comod}}
\newcommand{\Pmod}{\mathscr P -\mathsf{mod}}

\newcommand{\cCoop}{\mathsf{cCoop}}

\newcommand{\Set}{\mathsf{Set}}
\newcommand{\sSet}{\mathsf{sSet}}
\newcommand{\dgMod}{\mathsf{dgMod}}
\newcommand{\gMod}{\mathsf{gMod}}
\newcommand{\catOrd}{\mathsf{Ord}}
\newcommand{\catBij}{\mathsf{Bij}}
\newcommand{\catSmod}{\mbs\mathsf{mod}}
\newcommand{\EEtw}{\mathcal{E}\text{-}\mathsf{Tw}}
\newcommand{\OpBim}{\mathsf{Op}\text{-}\mathsf{Bim}}

\newcommand{\Palg}{\mathcal{P}-\mathsf{alg}}
\newcommand{\Qalg}{\mathcal{Q}-\mathsf{alg}}
\newcommand{\Pcog}{\mathcal{P}-\mathsf{cog}}
\newcommand{\Qcog}{\mathcal{Q}-\mathsf{cog}}
\newcommand{\Ccog}{\mathcal{C}-\mathsf{cog}}
\newcommand{\Dcog}{\mathcal{D}-\mathsf{cog}}
\newcommand{\uCoCog}{\mathsf{uCoCog}}

\newcommand{\Artinalg}{\mathsf{Artin}-\mathsf{alg}}

\newcommand{\colim}{\mathrm{colim}}
\newcommand{\Def}{\mathrm{Def}}
\newcommand{\Bij}{\mathrm{Bij}}
\newcommand{\op}{\mathrm{op}}

\newcommand{\undern}{\underline{n}}
\newcommand{\dginterval}{{N{[1]}}}
\newcommand{\dgsimplex}[1]{{N{[#1]}}}

\newcommand{\cofree}{ T^c}
\newcommand{\Tw}{ Tw}
\newcommand{\End}{\mathcal{E}\mathrm{nd}}
\newcommand{\catEnd}{\mathsf{End}}
\newcommand{\coEnd}{\mathrm{co}\End}
\newcommand{\Mult}{\mathrm{Mult}}
\newcommand{\coMult}{\mathrm{coMult}}

\newcommand{\Lie}{\mathcal{L}\mathr{ie}}
\newcommand{\As}{\mathcal{A}\mathrm{s}}
\newcommand{\uAs}{\mathrm{u}\As}
\newcommand{\coAs}{\mathrm{co}\As}
\newcommand{\Com}{\mathcal{C}\mathrm{om}}
\newcommand{\uCom}{\mathrm{u}\Com}
\newcommand{\Perm}{\catoperad{Perm}}
\newcommand{\uBE}{\mathrm{u}\mathcal{BE}}
\newcommand{\uBEs}{{\uBE}^{\mathrm s}}

\newcommand{\PD}{\mathrm{PD}}
\newcommand{\abs}{\mathrm{abs}}

\newcommand{\itemt}{\item[$\triangleright$]}
\newcommand{\gr}{\mathrm{gr}}

\newcommand{\poubelle}[1]{}

\newcommand{\catdgalg}[1]{\mathsf{dg}~{#1}\text{-}\mathsf{alg}}
\newcommand{\catdgcog}[1]{\mathsf{dg}~{#1}\text{-}\mathsf{cog}}
\newcommand{\Map}[3]{\mathrm{Map}_{#1}\left(#2, #3\right)}
\newcommand{\Ab}{\mathrm{Ab}}
\newcommand{\Res}{\mathrm{Res}}
\newcommand{\FMP}{\mathsf{FMP}}
\newcommand{\comp}{\circ}
\newcommand{\complete}{\mathsf{comp}}
\newcommand{\Psh}{\mathsf{Psh}}
\newcommand{\Cell}{\mathrm{Cell}}
\newcommand{\Art}{\mathrm{Art}}
\newcommand{\CoArt}{\mathsf{CoArt}}
\newcommand{\invqis}{{\left[\mathrm{Q.iso}^{-1}\right]}}
\newcommand{\invw}{{\left[W^{-1}\right]}}
\newcommand{\catcellalg}[1]{\Cell~{#1}\text{-}\mathsf{alg}}
\newcommand{\catartalg}[1]{\Art~{#1}\text{-}\mathsf{alg}}
\newcommand{\catcoartcog}[1]{\CoArt~{#1}\text{-}\mathsf{cog}}

\newcommand{\Victor}[1]{\textcolor{blue}{#1}}
\newcommand{\Brice}[1]{\textcolor{red}{#1}}

\maketitle

\begin{abstract}
The main goal of this paper is to introduce a framework for infinitesimal deformation problems, using new methods coming from operadic calculus. We construct an adjunction between infinitesimal deformation problems over some type of algebras and their Koszul dual algebras, in any characteristic. This adjunction is an equivalence if and only if some algebras are equivalent to their completions. We give a concrete homological criterion for it. It gives us a new proof of the celebrated Lurie--Pridham theorem, as well as of many other generalizations of it. Our methods are effective, meaning they directly produce point-set models for the algebras that encode infinitesimal deformation problems. 
\end{abstract}

\setcounter{tocdepth}{1}
\tableofcontents

\section*{Introduction}
The idea that every infinitesimal deformation problem is encoded by a differential graded Lie algebra has a long history. It dates back to the works of K. Kodaira and D. Spencer on the deformation of complex structure on a manifold \cite{KodairaSpencer58}, and slowly became a guiding principle with the insights of P. Deligne, V. Drinfeld and many others. We refer to \cite{Toen,CalaqueGrivaux} for more details. Nevertheless, it was only recently that J. Pridham, and later J. Lurie were able to translate it into a precise mathematical statement.

\begin{theorem*}[\cite{Lurie11,Pridham}]\label{thm: intro}
Let $\kk$ be a field of characteristic zero. There is an equivalence of $\infty$-categories between the category of formal moduli problems and the category of dg Lie algebras over $\kk$.
\end{theorem*}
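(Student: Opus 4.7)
The strategy is to obtain this theorem as a direct specialization of the general Koszul-duality adjunction between infinitesimal deformation problems and their Koszul dual algebras constructed in this paper. Concretely, I would apply that adjunction to the commutative operad $\Com$ and verify the paper's homological criterion in characteristic zero using the well-known Koszul pairing between $\Com$ and the Lie operad.

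First, I would match the classical notion of a formal moduli problem with the paper's notion of an infinitesimal deformation problem over $\Com$-algebras. Recall that a formal moduli problem is a functor $F : \Art_\kk \to \sSet$ sending $\kk$ to a point and sending Schlessinger-type pullback squares to pullback squares. A mild reformulation shows that this is equivalent to the sheaf/limit-preservation axiom used in the paper to define infinitesimal deformation problems, once one identifies the paper's small $\Com$-algebras with the classical Artinian ones. This step should be essentially bookkeeping.

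Second, I would specialize the paper's main adjunction to $\mathcal{P} = \Com$. In characteristic zero, the Koszul dual of $\Com$ is (up to a degree shift) the Lie operad, so that the Koszul dual algebras in the paper's sense are nothing other than dg Lie algebras. The paper's adjunction thus takes the form
\[
\FMP_\kk \;\rightleftarrows\; \catdgalg{\mathsf{Lie}} .
\]
Finally, I would invoke the homological criterion provided by the paper to promote this adjunction to an equivalence. In the case at hand, this criterion reduces to the assertion that every augmented Artinian $\Com$-algebra is equivalent to its bar--cobar completion relative to the Lie cooperad, which is precisely the classical Koszul property of the pair commutative/Lie in characteristic zero.

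The main obstacle will be the careful identification, at the point-set level, of the operadic bar--cobar completion produced by the paper's general machinery with the classical Chevalley--Eilenberg construction on the Lie side. This is what ultimately reduces the abstract homological criterion to the standard Koszul resolution of $\Com$-algebras by Lie coalgebras, and it is the step where the characteristic zero hypothesis becomes essential: it is used to freely average over symmetric group actions when showing that this resolution is a quasi-isomorphism. Once this translation is in place, the Lurie--Pridham theorem follows as an immediate corollary of the paper's main theorem.
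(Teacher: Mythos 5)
Your overall strategy---specializing the paper's general adjunction to $\mathcal{P} = \Com$ and identifying the Koszul dual side with dg Lie algebras---is exactly the paper's route, and your first two steps (matching the two notions of formal moduli problem, and identifying $(\mathrm{B}\Com)^*$-algebras with shifted Lie algebras) are correct bookkeeping. The gap is in your third step: you misidentify the paper's homological criterion. You state it as ``every augmented Artinian $\Com$-algebra is equivalent to its bar--cobar completion relative to the Lie cooperad,'' i.e.\ a condition checked on the Artinian side, essentially the assertion that the unit $A \to \widehat{\mathrm{CE}}\,\mathfrak{D}(A)$ is a quasi-isomorphism. That is precisely the hard computational step of Lurie's original argument, which the paper is explicitly designed to \emph{avoid}. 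The paper's criterion (Theorem \ref{Thm: the canonical adjunction} and Theorem \ref{thm: main theorem}) lives entirely on the Lie side: the adjunction is an equivalence if and only if the functor $\mathbb{L}\mathrm{Ab}$ from cellular $\C^*$-algebras to cellular $\C$-algebras is fully faithful, which reduces (Proposition \ref{propositionzeroreducedcase}) to the natural inclusion
\[
\bigoplus_{n \geq 0} \left(\C(n)^* \otimes V^{\otimes n}\right)^{\mathbb{S}_n} \rightarrowtail \prod_{n \geq 0} \left(\C(n)^* \otimes V^{\otimes n}\right)^{\mathbb{S}_n}
\]
being a quasi-isomorphism for $V$ finite dimensional in non-positive degrees, i.e.\ to free Lie algebras on such $V$ agreeing with their completions. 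This is guaranteed by the \emph{temperedness} of $\C = \mathrm{B}\Com \simeq s\mathcal{L}\mathrm{ie}^*$ (Lemma \ref{prop: sLie tempered}), a one-line degree count: $s\mathcal{L}\mathrm{ie}^*(n)$ sits in a single homological degree growing with $n$, so the sum and the product coincide degreewise.

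Relatedly, your account of where characteristic zero enters is off. It is not used to ``average over symmetric group actions'' in verifying the criterion---the sum-equals-product argument is characteristic-free. Characteristic zero is used only to dispense with the quasi-planarity hypothesis on $\C$ (every conilpotent dg cooperad admits a quasi-planar replacement there), and more fundamentally it is what makes plain dg Lie algebras, rather than divided power $(\mathrm{B}\Com)^*$-algebras (partition Lie algebras), the correct Koszul dual category. As written, your step 3 would force you back into the Chevalley--Eilenberg computation you describe in your final paragraph, i.e.\ into Lurie's proof rather than a corollary of this paper's main theorem; replacing it with the temperedness check on $s\mathcal{L}\mathrm{ie}^*$ closes the gap.
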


Here the notion of a formal moduli problem corresponds to a formalization of the idea of an "infinitesimal deformation problem". A formal moduli problem is a functor from $\kk$-augmented dg Artinian $\kk$-algebras with residue field $\kk$ to spaces, which preserves certain homotopy pullbacks. Informally speaking, these dg Artinian $\kk$-algebras correspond to infinitesimal neighbourhoods of a $\kk$-point of an algebro-geometric object, for instance a moduli space. A formal moduli problem therefore encodes the formal neighbourhood of this $\kk$-point via a "functor of points" perspective.

\medskip

The Lurie--Pridham theorem has been since then generalized in many directions. For example, B. Hennion generalized it to the case where the base field is a connective dg unital commutative $\kk$-algebra $A$ with finite dimensional homology, see \cite{Hennion}. Formal moduli problems then correspond to formal neighbourhoods of a $\kk$-point in a algebro-geometric object over $A$, and these are encoded by dg Lie algebras in dg $A$-modules. More generally, J. Nuiten showed in \cite{Liealgebroids} that formal neighbourhoods of more general geometric objects inside a moduli space are encoded by dg Lie \textit{algebroids}. 

\medskip

D. Calaque, R. Campos and J. Nuiten showed in \cite{CCN19} that the fact that dg Lie algebras are the ones appearing on the other side of the correspondence is a particular instance of Koszul duality for operads. This idea can be traced back to the approach of J. Lurie in \cite[Sections 3 and 4]{Lurie11}. One can define formal moduli problems in a more general setting than just dg Artinian commutative algebras, defining them over Artinian algebras over any augmented operad $\mathcal{P}$. They showed that for certain operads satisfying a technical condition, called \textit{splendid} operads, the $\infty$-category of formal moduli problems over $\mathcal{P}$-algebras is equivalent to the $\infty$-category of dg algebras over a Koszul dual operad.

\medskip

Another direction of generalizations is given when one considers positive characteristic base fields. In this case, the naive analogue of the Lurie--Pridham theorem is false, as dg Lie algebras are notoriously badly behaved in positive characteristic. Some results were still available for infinitesimal deformation problems defined over simplicial Artinian algebras, see \cite[Section 5]{Pridham}. However, L. Brantner and A. Mathew showed in \cite{brantnermathew} that the $\infty$-category of formal moduli problems is equivalent to the $\infty$-category of algebras over a monad (defined $\infty$-categorically). Algebras over this monad are called \textit{partition Lie algebras}. L. Brantner, R. Campos and J. Nuiten later showed in \cite{pdalgebras} that this $\infty$-category can be obtained as the localization of a semi-model category of divided power algebras over an operad. This in particular gives \textit{point-set} models for these partitions Lie algebras. There are two versions of these theorems, depending on whether one considers \textit{spectral} or \textit{derived} approaches; we will only focus on the spectral case from now on. Let us say a few words about the proofs of these theorems.

\medskip

\textbf{Pridham's method.} In Pridham's approach, infinitesimal deformation problems are defined at the point-set level, as left exact functors from dg strictly Artinian algebras to simplicial sets. He then endows them with a model structure generated by small extension. In order to relate them to dg Lie algebras, the idea is to use Koszul duality at the model categorical level. There is an adjunction between dg Lie algebras and conilpotent dg cocommutative coalgebras, given by the Chevalley-Eilenberg complex and its adjoint functor. This was already used by V. Hinich in \cite{Hinich} to transfer the classical model structure on dg Lie algebras, where weak-equivalences are given by quasi-isomorphisms and fibrations by degree-wise surjections, to conilpotent dg cocommutative coalgebras and obtain a Quillen equivalence, which was  interpreted in terms of derived deformation theory. Pridham uses the anti-equivalence, given by linear duality, between dg pro-Artinian algebras and conilpotent dg cocommutative coalgebras to relate dg Lie algebras with left exact functors and obtain a Quillen equivalence. About this topic, see also \cite{lazarev}.

\medskip

\textbf{Lurie's method.} The approach of Lurie relies on a general $\infty$-categorical machinery, which takes as input some version of Koszul duality and gives the equivalence; see \cite[Section 1]{Lurie11}. These arguments have been successively adapted in order to prove many generalizations. Let us illustrate this machinery in the original case.

\medskip

Instead of working with point-set models, Lurie defines the $\infty$-category of Artinian algebras as follows. Let $\kk$ denote the base field and let $S^n$ denote the chain complex with only $\kk$ in degree $n$. The $\infty$-category of Artinian algebras is the smallest sub-$\infty$-category of dg commutative algebras up to quasi-isomorphisms generated by $S^n$ for $n \geq 0$, and stable under pullbacks along maps $0 \longrightarrow S^n$ for $n > 0$. The $\infty$-category of \textit{formal moduli problems} is defined as the $\infty$-category of functors 

\[
\mathsf{FMP} \coloneqq \mathsf{Fun}^\ulcorner (\mathsf{Art}\text{-}\mathsf{alg}, \mathcal{S})~,
\]
\vspace{0.1pc}

which preserve pullbacks along maps $0 \longrightarrow S^n$ for $n > 0$, where $\mathcal{S}$ denotes the $\infty$-category of spaces. The key input now is the the Koszul duality that relates dg Lie algebras and dg commutative algebras, where one considers the linear dual of the Chevalley-Eilenberg complex.  However, here one views this object in the category of dg commutative algebras instead of in the category of dg pro-Artinian algebras. This construction induces a functor 

\[
\begin{tikzcd}
\widehat{\mathrm{CE}}: \mathsf{dg}~\mathsf{Lie}\text{-}\mathsf{alg}~[\mathrm{Q.iso}^{-1}] \arrow[r,"\mathrm{CE}^c"]
&\mathsf{dg}~\mathsf{Com}\text{-}\mathsf{coalg}~[\mathrm{Q.iso}^{-1}] \arrow[r,"(-)^*"]
&\mathsf{dg}~\mathsf{Com}\text{-}\mathsf{alg}^{\mathsf{op}}~[\mathrm{Q.iso}^{-1}]
\end{tikzcd}
\]
\vspace{0.1pc}

between the $\infty$-category of dg Lie algebras and the $\infty$-category of dg commutative algebras. Using that $\widehat{\mathrm{CE}}$ commutes with (homotopy) limits, one can deduce that it must have an $\infty$-categorical left adjoint, which is denoted by $\mathfrak{D}$. This induces a functor 

\[
\begin{tikzcd}[column sep=3.5pc,row sep=0.5pc]
\Psi: \mathsf{dg}~\mathsf{Lie}\text{-}\mathsf{alg}~[\mathrm{Q.iso}^{-1}] \arrow[r,]
&\mathsf{Fun}\left(\mathsf{Art}\text{-}\mathsf{alg},\mathcal{S}\right) \\
\mathfrak{g} \arrow[r,mapsto]
&\left[ A \mapsto \mathrm{Map}(\mathfrak{D}(A),\mathfrak{g}) \right]~.
\end{tikzcd}
\]
Here is where one needs to prove various key assertions:

\medskip

\begin{enumerate}
\item The functor $\mathfrak{D}(-)$ preserves (homotopy) pullback, and therefore the image of $\Psi$ lands in the $\infty$-category of formal moduli problems. 

\medskip

\item The functor $\Psi$ is conservative and preserves sifted (homotopy) colimits. Since it is accessible and preserves all limits, it admits a left adjoint $\Phi$.

\medskip

\item The functor $\Phi$ is fully faithful, that is, the unit $\mathrm{id} \qi \Psi \circ \Phi$ is a weak-equivalence.
\end{enumerate}

\medskip

If these assertions are true, then the adjunction $\Phi \dashv \Psi$ is an equivalence of $\infty$-categories. Indeed, using the fact that the unit of adjunction is a weak-equivalence and that $\Psi$ is conservative, we can deduce that the counit is also a weak-equivalence, and therefore that the adjunction is an equivalence. 

\medskip

The first point is in general non-trivial. The second point is somewhat easier, as it can be directly checked on $\mathrm{T} \circ \Psi$, where $\mathrm{T}$ is the tangent space of a formal moduli problem. The hardest assertion to show is the third one. It can be shown, using hypercoverings, that any formal moduli problem $F$ is weakly-equivalent to a the geometric realization of a simplicial object $X_\bullet$, where $X_n$ is pro-representable. Using the fact that $\Psi$ commutes with sifted (homotopy) colimits, one can then reduce to the pro-representable case. Using the fact that $\Psi$ commutes with filtered (homotopy) colimits, one can further reduce to the representable case. So the third point can be reduced to showing that for any Artinian algebra $A$,

\[
\eta_A: A \longrightarrow \widehat{\mathrm{CE}}~\mathfrak{D}(A)
\]
\vspace{0.1pc}

is a quasi-isomorphism. Even then, there are still quite non-trivial computations to be done.

\medskip

\textbf{Explaining the method.} The main goal of this paper is to introduce a new approach to formal moduli problems and generalize many of the known statements using it. Before stating the general results obtained in this paper, let us illustrate our method in the case of the Lurie--Pridham. There are two main ingredients. 

\medskip

\begin{enumerate}
    \item The first idea is to present the Koszul duality that relates Artinian algebras and cellular Lie algebras at the model categorical level, factoring it as a series of three Quillen adjunctions, where the first two always restrict to equivalences. 

\medskip

    \item The second idea is to leverage the presentation of the $\infty$-category of Lie algebras obtained in \cite{grignou2021algebraic} to reduce the proof to a simple, point-set comparison that involves the third Quillen adjunction above-mentioned.  
\end{enumerate}

\medskip

Recall that the $\infty$-category of formal moduli problems is defined as a sub-$\infty$-category of functors from the $\infty$-category of Artinian algebras to spaces. We present the $\infty$-category of Artinian algebras using $\mathcal{C}_\infty$-algebras, a point-set model for commutative algebras up to homotopy. Combining the operadic tools developped in \cite{linearcoalgebras} and in \cite{absolutealgebras}, we get the following sequence of Quillen adjunctions

\[
\begin{tikzcd}
    \mathcal{C}_\infty \text{-}\mathsf{alg}^{\mathsf{op}} \arrow[r, shift left=1.1ex, "(-)^\circ"{name=F}]           
    &\mathcal{C}_\infty \text{-}\mathsf{coalg} \arrow[l, shift left=.75ex, "(-)^*"{name=U}] \arrow[r, shift left=1.1ex, "\widehat{\Omega}"{name=A}]
    &\mathsf{dg}~\mathsf{abs}~\mathsf{Lie}\text{-}\mathsf{alg} \arrow[l, shift left=.75ex, "\widehat{\mathrm{B}}"{name=B}] \arrow[r, shift left=1.1ex, "\mathrm{Res}"{name=S}]
    &\mathsf{dg}~ \mathsf{Lie}\text{-}\mathsf{alg}~. \arrow[l, shift left=.75ex, "\mathrm{Abs}"{name=D}]  \arrow[phantom, from=F, to=U, , "\dashv" rotate=90]  \arrow[phantom, from=A, to=B, , "\dashv" rotate=-90]  \arrow[phantom, from=S, to=D, , "\dashv" rotate=90]
\end{tikzcd}
\]

Let us give more details.

\medskip

\begin{enumerate}
    \item The category of $\C_\infty$-algebras is the category of algebras over a cofibrant resolution of the commutative operad. It is endowed with a transferred model structure from chain complexes, where weak-equivalences are quasi-isomorphisms. Artinian algebras are the full sub-$\infty$-category of $\C_\infty$-algebras up to quasi-isomorphisms generated by $\{S^n\}_{n \geq 0}$, and stable under pullbacks along maps $0 \longrightarrow S^n$ for $n > 0$. 

    \medskip

    \item The category of $\C_\infty$-coalgebras is the category of coalgebras over a cofibrant resolution of the commutative operad. These are \textit{all, non-necessarily conilpotent} $\C_\infty$-coalgebras; we endow them with a transferred model structure from chain complexes, where weak-equivalences are \textit{quasi-isomorphisms}. These coalgebras and their model structure differ from those considered in \cite{Hinich, Pridham}. We define coArtinian coalgebras as the full sub-$\infty$-category of $\C_\infty$-coalgebras up to quasi-isomorphisms generated by $\{S^n\}_{n \leq 0}$, and stable under pushouts along maps $S^n \longrightarrow 0$ for $n < 0$. 

    \medskip
    
    \item The category of dg \textit{absolute} Lie algebras appears as the Koszul dual of these non-necessarily conilpotent $\mathcal{C}_\infty$-coalgebras. This is a new type of algebraic structure introduced in \cite{linearcoalgebras}, where infinite sums of operations have a well-defined image without assuming an underlying topology. These objects are encoded as algebras over a \textit{cooperad} instead of an operad. We endow them with a transferred model structure along the \textit{complete bar-cobar} adjunction $\widehat{\Omega} \dashv \widehat{\mathrm{B}}$ that relates them with $\C_\infty$-coalgebras, constructed in \cite{linearcoalgebras}. 

    \medskip
    
    \item Finally, consider dg Lie algebras with their transferred model structure from chain complexes, where weak-equivalences are quasi-isomorphisms. We define \textit{cellular Lie algebras} as the full sub-$\infty$-category of dg Lie algebras up to quasi-isomorphisms, generated by free Lie algebras $\{ \mathcal{L}\mathrm{ie} \circ S^n\}_{n \leq -1}$, and stable under pushouts along maps $\mathcal{L}\mathrm{ie} \circ S^n \longrightarrow 0$ for $n < -1$. 
    \medskip
\end{enumerate}

The first Quillen adjunction between $\C_\infty$-algebras and $\C_\infty$-coalgebras is given by the linear dual functor and its adjoint. By \cite{absolutealgebras}, the induced adjunction between $\infty$-categories restricts to an equivalence between the full sub-$\infty$-categories on objects with finite total dimensional homology ; this equivalence of $\infty$-categories restricts to an equivalence between Artinian algebras and coArtinian coalgebras. This allows us to define formal moduli problems purely in terms of $\C_\infty$-coalgebras, which can be viewed as formal derived affine schemes, similar to how non-necessirely conilpotent cocommutative coalgebras can be used to define formal affine schemes \cite{takeushi}. 

\medskip

The complete bar-cobar adjunction between $\C_\infty$-coalgebras and dg absolute Lie algebras is a Quillen equivalence by \cite{linearcoalgebras}. It sends coArtinian coalgebras to \textit{cellular absolute Lie algebras}: the full sub-$\infty$-category generated by free absolute Lie algebras $\{\widehat{\mathcal{L}}\mathrm{ie} \circ S^n\}_{n \leq -1}$, and stable under pushouts along maps $\widehat{\mathcal{L}}\mathrm{ie} \circ S^n \longrightarrow 0$ for $n < -1$. Thus, formal moduli problems are equivalent to the $\infty$-category of contravariant functors 

\[
\mathsf{FMP} \simeq \mathsf{Fun}^{\lrcorner}(\mathsf{Cell}~\mathsf{abs}~\mathsf{Lie}\text{-}\mathsf{alg}^\mathsf{op}, \mathcal{S})~,
\]
\vspace{0.1pc}

from cellular absolute Lie algebras to spaces which preserve pushouts along maps $\widehat{\mathcal{L}}\mathrm{ie} \circ S^n \longrightarrow 0$ for $n < -1$. By the results of \cite{grignou2021algebraic}, there is an equivalence of $\infty$-categories 

\[
\mathsf{dg}~\mathsf{Lie}\text{-}\mathsf{alg}~[\mathrm{Q.iso}^{-1}] 
\simeq \mathsf{Fun}^{\lrcorner}(\mathsf{Cell}~\mathsf{Lie}\text{-}\mathsf{alg}^\mathsf{op}, \mathcal{S})~,
\]
\vspace{0.1pc}

where on the right side we consider functors from cellular Lie algebras to spaces which preserve pushouts along maps $\mathcal{L}\mathrm{ie} \circ S^n \longrightarrow 0$ for $n < -1$. This presentation of the $\infty$-category of dg Lie algebras, called cellular presentation in  \cite{grignou2021algebraic} differs from the one that appears in \cite{Hennion} (called the cartesian presentation in \cite{grignou2021algebraic}).

\medskip

The Quillen adjunction $\mathrm{Ab} \dashv \mathrm{Res}$ induces an adjunction between $\infty$-categories whose left adjoint $\mathbb{L}\mathrm{Ab}$ sends cellular absolute Lie algebras to cellular dg Lie algebras. This yields another adjunction at the $\infty$-categorical level

\[
\begin{tikzcd}[column sep=5pc,row sep=5pc]
\mathsf{FMP} \arrow[r,"\Phi"{name=LDC},shift left=1.1ex ] 
&\mathsf{dg}~\mathsf{Lie}\text{-}\mathsf{alg}~[\mathrm{Q.iso}^{-1}] ~. \arrow[l,"\Psi"{name=TD},shift left=1.1ex] \arrow[phantom, from=TD, to=LDC, , "\dashv" rotate=90] 
\end{tikzcd}
\] 

 Notice that the adjunction goes, \textit{a priori}, in the other way as in Lurie's formalism. This adjunction is an equivalence of $\infty$-categories if and only if the $\infty$-categories of cellular Lie algebras and of cellular \textit{absolute} Lie algebras coincide. This can be checked at the point-set level, and by a simple \textit{dévissage} argument it amounts to showing that the inclusion 

\[
\eta: \bigoplus_{n \geq 0} \mathcal{L}\mathrm{ie}(n) \otimes_{\mathbb{S}_n} V^{\otimes n} \rightarrowtail \prod_{n \geq 0} \mathcal{L}\mathrm{ie}(n) \otimes_{\mathbb{S}_n} V^{\otimes n}~,
\]
\vspace{0.1pc}

of the free dg Lie algebra into the free completed dg Lie algebra is a quasi-isomorphism, where $V$ is any finite dimensional chain complex concentrated in degrees $\leq -1$. This follows from a straightforward observation: since there are only a finite number of non-zero terms in each degree, the direct sum coincides with the product and $\eta$ is an isomorphism. This is the only non-formal step in the proof.

\newpage

\textbf{General results.} Let $\kk$ be a field of any characteristic. On the one hand, we use the generalization of the homotopical operadic calculus to the positive characteristic case developped in \cite{premierpapier}. On the other hand, we use the presentations of $\infty$-category of algebras over a monad in terms of cellular objects obtained in \cite{presentationalgebras}, which are valid over any ring. 

\medskip

We consider formal moduli problems defined over Artinian $\mathcal{P}$-algebras, for $\mathcal{P}$ a dg operad. When working in positive characteristic, some extra hypothesis are required on $\mathcal{P}$ to have a meaningful homotopy theory on the category of $\mathcal{P}$-algebras. Now, if $\mathcal{P}$ is augmented, the augmentation allows us to endow $S^n$ with a trivial $\mathcal{P}$-algebra structure, and thus we can define the $\infty$-categories of Artinian $\mathcal{P}$-algebras and of formal moduli problems of $\mathcal{P}$-algebras. Without any loss of generality, we take a cofibrant replacement of $\mathcal{P}$ of the form $\Omega \C$, where $\C$ is a particularly well-behaved dg cooperad which we call \textit{quasi-planar}, see \cite{premierpapier}. This does not change the $\infty$-categories considered. Following the same steps as explained before, we get the following general result.

\begin{theoremintro}[Theorem \ref{Thm: the canonical adjunction}]
There is a canonical adjunction 
\[
\begin{tikzcd}[column sep=5pc,row sep=5pc]
\mathsf{FMP}_{\Omega\C} \arrow[r,"\Phi"{name=LDC},shift left=1.1ex ] 
&\mathsf{dg}~\C^*\text{-}\mathsf{alg}~[\mathrm{Q.iso}^{-1}]~,\arrow[l,"\Psi"{name=TD},shift left=1.1ex] \arrow[phantom, from=TD, to=LDC, , "\dashv" rotate=90] 
\end{tikzcd}
\] 
between the $\infty$-categories of formal moduli problems over $\Omega \C$-algebras and the $\infty$-category of dg $\C^*$-algebras. Furthermore, this adjunction is an equivalence if and only if the derived functor
\[
\begin{tikzcd}[column sep=4pc,row sep=5pc]
\mathsf{Cell}~\C^*\text{-}\mathsf{alg} \arrow[r,"\mathbb{L}\mathrm{Ab}"{name=TD}]
&\mathsf{Cell}~\C\text{-}\mathsf{alg}~.
\end{tikzcd}
\] 
is fully faithful. 
\end{theoremintro}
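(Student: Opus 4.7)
The plan is to assemble the adjunction $\Phi \dashv \Psi$ by concatenating the three Quillen adjunctions displayed in the introduction, now applied in full generality with $\Omega\C$ playing the role of $\C_\infty$ and $\C^*$ playing the role of Lie. First I would unpack the identifications
\[
\mathsf{FMP}_{\Omega\C} \simeq \mathsf{Fun}^{\lrcorner}\!\bigl(\mathsf{CoArt}~\C\text{-}\mathsf{cog}^{\mathsf{op}}, \mathcal{S}\bigr) \simeq \mathsf{Fun}^{\lrcorner}\!\bigl(\mathsf{Cell}~\mathsf{abs}~\C^*\text{-}\mathsf{alg}^{\mathsf{op}}, \mathcal{S}\bigr),
\]
where the first equivalence uses the equivalence between Artinian $\Omega\C$-algebras and coArtinian $\C$-coalgebras furnished by linear duality on finite-dimensional-homology objects (from \cite{absolutealgebras}), and the second uses the Quillen equivalence between $\C$-coalgebras and absolute $\C^*$-algebras provided by the complete bar--cobar adjunction (from \cite{linearcoalgebras}), which sends coArtinian coalgebras to cellular absolute algebras because both categories are generated by the same cells $S^n$ and stable under the same pushouts. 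In parallel, I would invoke the cellular presentation of algebras over a monad from \cite{presentationalgebras} to write
\[
\mathsf{dg}~\C^*\text{-}\mathsf{alg}~[\mathrm{Q.iso}^{-1}] \simeq \mathsf{Fun}^{\lrcorner}\!\bigl(\mathsf{Cell}~\C^*\text{-}\mathsf{alg}^{\mathsf{op}}, \mathcal{S}\bigr).
\]

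Second, I would produce the adjunction $\Phi \dashv \Psi$ by restriction along $\mathbb{L}\mathrm{Ab}$. The point-set Quillen adjunction $\mathrm{Abs} \dashv \mathrm{Res}$ derives to a functor $\mathbb{L}\mathrm{Ab}: \mathsf{Cell}~\mathsf{abs}~\C^*\text{-}\mathsf{alg} \to \mathsf{Cell}~\C^*\text{-}\mathsf{alg}$; crucially this functor is essentially surjective, since both cellular categories are generated by the same cells $\C^* \circ S^n$ (resp.\ $\widehat{\C^*} \circ S^n$) and stable under the same pushouts, and $\mathrm{Abs}$ sends the former to the latter by construction. Precomposition with $\mathbb{L}\mathrm{Ab}^{\mathsf{op}}$ then defines $\Psi$, whose left adjoint $\Phi$ is the left Kan extension of the Yoneda embedding along $\mathbb{L}\mathrm{Ab}^{\mathsf{op}}$. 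The only thing to verify is that $\Psi$ actually lands in the pushout-preserving sub-$\infty$-category $\mathsf{FMP}_{\Omega\C}$, which is formal: $\mathbb{L}\mathrm{Ab}$ preserves the distinguished pushouts since it is a left adjoint.

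Third, and this is the main substantive point, I would analyze when $\Phi \dashv \Psi$ is an equivalence. Under the functor-categorical descriptions above, $\Psi$ is precomposition with $\mathbb{L}\mathrm{Ab}^{\mathsf{op}}$ and $\Phi$ is the corresponding left Kan extension. A standard $\infty$-categorical argument shows that this adjunction is an equivalence if and only if $\mathbb{L}\mathrm{Ab}$ is an equivalence of $\infty$-categories between the two cellular subcategories. Since essential surjectivity of $\mathbb{L}\mathrm{Ab}$ is automatic by the argument above, the equivalence of the canonical adjunction reduces exactly to fully faithfulness of $\mathbb{L}\mathrm{Ab}$.

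The main obstacle I anticipate is not any single step but rather the bookkeeping needed to ensure that the cellular subcategories on both sides are matched correctly under the bar--cobar Quillen equivalence and that the functor-of-points descriptions carry over in the appropriate direction (note the contravariance, which flips Artinian to coArtinian and pullbacks to pushouts). Once those identifications are in place, the equivalence criterion is a formal consequence of the general fact that an adjunction given by restriction/left-Kan-extension along an essentially surjective functor between small $\infty$-categories is an equivalence precisely when that functor is fully faithful.
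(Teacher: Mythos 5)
Your overall strategy is the same as the paper's: identify $\mathsf{FMP}_{\Omega\C}$ with cocartesian presheaves on cellular $\C$-algebras (via Sweedler duality on finite-dimensional-homology objects followed by the complete bar--cobar Quillen equivalence), identify $\mathsf{dg}~\C^*\text{-}\mathsf{alg}~[\mathrm{Q.iso}^{-1}]$ with cocartesian presheaves on cellular $\C^*$-algebras via the presentation theorem of \cite{presentationalgebras}, obtain the adjunction by restriction and left Kan extension along the comparison functor between the two cellular index categories, and observe that essential surjectivity of that comparison is automatic, so that the equivalence criterion reduces to fully faithfulness. This is exactly the route taken in Theorem \ref{thm: coalgebraic formal moduli problems}, Proposition \ref{prop: FMP simeq Functors from cellular C-algebras}, Theorem \ref{thm: présentation des C^*-algebres} and the proof of Theorem \ref{Thm: the canonical adjunction}.

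There is, however, one genuine error: the direction of the comparison functor. You take $\mathbb{L}\mathrm{Ab}$ to go from cellular \emph{absolute} algebras to cellular $\C^*$-algebras, and justify cellularity preservation by saying that the absolutization functor ``sends the former to the latter by construction''. But $\mathrm{Ab}$ is the \emph{left} adjoint of $\mathrm{Res}$, induced by the monad morphism from $\bigoplus_n \left(\C(n)^* \otimes (-)^{\otimes n}\right)^{\mathbb{S}_n}$ to $\prod_n \mathrm{Hom}_{\mathbb{S}_n}\left(\C(n), (-)^{\otimes n}\right)$; it therefore goes from $\C^*$-algebras to qp-complete $\C$-algebras, carries the free algebra $\C^* \circ S^n$ to $(S^n)^{\C}$, preserves pushouts, and hence restricts to $\mathsf{Cell}~\C^*\text{-}\mathsf{alg} \to \mathsf{Cell}~\C\text{-}\mathsf{alg}$ --- the orientation displayed in the very statement you are proving. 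No functor is available unconditionally in the direction you write: the right adjoint $\mathrm{Res}$ does not send the free absolute algebra $(S^n)^{\C}$ (a product) to the free algebra $\C^* \circ S^n$ (a direct sum), and the discrepancy between these two is precisely what the fully-faithfulness condition measures, so positing such a functor begs the question. The slip propagates: with the correct orientation, restriction along $\mathbb{L}\mathrm{Ab}$ goes from $\mathsf{FMP}_{\Omega\C}$ to algebras and the left Kan extension goes from algebras to $\mathsf{FMP}_{\Omega\C}$, so the functor $\mathsf{dg}~\C^*\text{-}\mathsf{alg} \to \mathsf{FMP}_{\Omega\C}$ is the \emph{left} adjoint (this is the point the paper emphasizes about the adjunction going the opposite way from Lurie's formalism), whereas your construction makes it the right adjoint. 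Once the direction is corrected, your remaining steps --- essential surjectivity from generation by cells and stability under the distinguished pushouts, then reduction of the equivalence criterion to fully faithfulness --- go through and coincide with Lemma \ref{lemma: cellular equiv = quasi-iso somme produit sur les parfaits2} and the paper's proof.
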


Again, notice that the adjunction here goes, in general, in the opposite way as the adjunctions previously constructed in the litterature (as it was only possible to construct them whenever they were an equivalence). Here the functor $\mathbb{L}\mathrm{Ab}$ compares cellular $\C^*$-algebras (divided power algebras over an operad) with cellular $\C$-algebras (over a cooperad). They are the non-absolute and the absolute versions of the same algebraic structure. We show that this functor is fully faithful if and only if the natural inclusion 

\[
\eta: \bigoplus_{n \geq 0} \left(\mathcal{C}(n)^* \otimes V^{\otimes n}\right)^{\mathbb{S}_n} \rightarrowtail \prod_{n \geq 0} \left(\mathcal{C}(n)^* \otimes V^{\otimes n}\right)^{\mathbb{S}_n}~.
\]

is a quasi-isomorphism, where $V$ is a finite dimensional dg module in degrees $\leq 0$. Let us point out that the map $\eta$ being a quasi-isomorphisms is a \textit{homotopy completeness} condition, analogous to the one introduced by J. Harper and K. Hess in \cite{HarperHess}. The condition in Theorem \ref{Thm: the canonical adjunction} can be reformulated as: cellular $\C^*$-algebras are homotopy complete. Over a characteristic zero field, it can also be reformulated as: the Goodwillie tower of the identity functor of dg $\C^*$-algebras converges on cellular objects. 

\medskip

For $\C$ $0$-reduced, meaning $\C(0) = 0$, we give a concrete homological criterion on $\C$ called \textit{temperedness} that implies that $\eta$ is indeed a quasi-isomorphism.

\begin{theoremintro}[Theorem \ref{thm: main theorem}]
Let $\C$ be a \textit{tempered} quasi-planar conilpotent dg cooperad. The canonical adjunction 
\[
\begin{tikzcd}[column sep=5pc,row sep=5pc]
\mathsf{FMP}_{\Omega\C} \arrow[r,"\Phi"{name=LDC},shift left=1.1ex ] 
&\mathsf{dg}~\C^*\text{-}\mathsf{alg}~[\mathrm{Q.iso}^{-1}]~,\arrow[l,"\Psi"{name=TD},shift left=1.1ex] \arrow[phantom, from=TD, to=LDC, , "\dashv" rotate=90] 
\end{tikzcd}
\] 

is an equivalence of $\infty$-categories.
\end{theoremintro}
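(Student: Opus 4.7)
My strategy is to combine the reformulation provided by Theorem A with the concrete homological content of the temperedness hypothesis. The first step is simply to invoke Theorem A: the canonical adjunction $\Phi \dashv \Psi$ is an equivalence of $\infty$-categories precisely when the derived functor
\[
\mathbb{L}\mathrm{Ab}: \mathsf{Cell}~\C^*\text{-}\mathsf{alg} \longrightarrow \mathsf{Cell}~\C\text{-}\mathsf{alg}
\]
is fully faithful. The task therefore reduces to proving full faithfulness of $\mathbb{L}\mathrm{Ab}$ on cellular objects, under the temperedness assumption on $\C$.

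\medskip

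The second step is a dévissage along the inductive construction of cellular algebras. Cellular $\C^*$-algebras are built from free algebras on spheres $S^n$ by iterated pushouts along the boundary maps $\C^*\circ S^n \to 0$, and the analogous description holds on the absolute side; since $\mathbb{L}\mathrm{Ab}$ is a left adjoint, it preserves all such colimits. Checking that the unit of the derived adjunction is a quasi-isomorphism on cellular objects thus reduces, by induction on the cellular filtration, to checking it on the free algebras on a finite dimensional dg module $V$ concentrated in non-positive degrees. Unfolding the free algebra functors on both sides, this is precisely the statement that the natural inclusion
\[
\eta_V: \bigoplus_{n \geq 0} \bigl(\C(n)^* \otimes V^{\otimes n}\bigr)^{\mathbb{S}_n} \rightarrowtail \prod_{n \geq 0} \bigl(\C(n)^* \otimes V^{\otimes n}\bigr)^{\mathbb{S}_n}
\]
is a quasi-isomorphism for every such $V$. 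This reformulation is already explicitly highlighted in the discussion preceding the statement of the theorem.

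\medskip

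The main obstacle, and the place where the temperedness hypothesis enters in an essential way, is the final step: proving that $\eta_V$ is a quasi-isomorphism. I expect the definition of temperedness to encode a precise growth/concentration bound on the homological degrees of the components $\C(n)$ as a function of the arity $n$. The plan is to combine this bound with the assumption that $V$ is finite dimensional and concentrated in non-positive degrees in order to show that, in each fixed total homological degree, only finitely many arities $n$ contribute to the product. Once this degree-wise finiteness is established, the inclusion of the direct sum into the product becomes either an isomorphism of graded objects outright or a quasi-isomorphism via a convergent arity filtration, which settles the theorem.

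\medskip

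The subtle part of this last step, compared with the elementary characteristic-zero argument recalled in the introduction (where finite dimensionality of $V$ in the right range already forced only finitely many non-zero summands to appear in each degree) is that here one must deal with $\mathbb{S}_n$-invariants rather than free $\mathbb{S}_n$-modules; in positive characteristic, invariants do not preserve quasi-isomorphisms in general, and therefore temperedness must be strong enough to control not just the homology of the plethysms $\C(n)^*\otimes V^{\otimes n}$ but also that of their $\mathbb{S}_n$-invariants. Handling this interaction between temperedness, the Schur functor, and the difference between direct sum and product is where I expect most of the real work to lie.
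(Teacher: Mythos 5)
Your overall strategy coincides with the paper's: reduce via Theorem A to full faithfulness of $\mathbb{L}\mathrm{Ab}$ on cellular objects, then to free $\C^*$-algebras on finite dimensional dg modules in non-positive degrees, and finally use temperedness to show that only finitely many arities contribute in each homological degree. But two of these steps are left with genuine gaps. The more serious one is the d\'evissage from cellular to free algebras: it cannot be justified by observing that $\mathbb{L}\mathrm{Ab}$ preserves pushouts, because the map you must control is the derived unit $\mathfrak{g} \to \mathrm{Res}\,\mathbb{L}\mathrm{Ab}(\mathfrak{g})$, and $\mathrm{Res}$ is a right adjoint with no reason to preserve the pushouts along $\C^* \circ S^n \to 0$ that define the cellular filtration --- whether it does so on cellular objects is essentially equivalent to the statement being proven. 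The paper's Proposition \ref{propositionzeroreducedcase} instead works with a strictly cellular point-set model $(\C^* \circ V, d_{\C^* \circ V} + d_2)$, filters both sides of the unit by arity (this is where $0$-reducedness of $\C$ enters, to make the filtration stable under $d_2$), and compares the resulting spectral sequences; even there, convergence on the direct-sum side is not automatic and is bootstrapped from the free case through an intermediate reduction to $d_2$ concentrated in arity $\leq 1$, using a second, finite filtration coming from the cell structure of $V$.

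The second gap is the key homological assertion itself, which you correctly isolate --- temperedness must force the homology of $\mathrm{Hom}(\C(n), V^{\otimes n})^{\mathbb{S}_n}$ into increasingly negative degrees as $n$ grows, despite invariants not being exact in positive characteristic --- but whose proof you explicitly defer. This is Lemma \ref{lemma: key lemma} in the paper, and it is exactly where quasi-planarity is used: since $\C(n)$ is $\mathbb{S}_n$-projective, one replaces the intelligent truncation $\tau_{\kappa+1}\C(n)$ by an $\mathbb{S}_n$-projective resolution $P$ concentrated in degrees $> \kappa$, obtains a quasi-isomorphism $\mathrm{Hom}(\C(n), V^{\otimes n}) \qi \mathrm{Hom}(P, V^{\otimes n})$ of $\mathbb{S}_n$-injective dg modules, and uses that invariants of $\mathbb{S}_n$-injective modules compute homotopy invariants to conclude vanishing in degrees $\geq -\kappa$. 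Without this argument (or a substitute), the degree-wise finiteness at the heart of your final step remains unproven, so the proposal as written does not yet constitute a proof.
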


Let us point out that the condition of $\C$ being tempered is equivalent over a zero characteristic field to that of $\Omega\C$ being \textit{splendid} in the sense of \cite{CCN19}. This gives a new interpretation of the splendid condition, which is not just a technical condition, but in fact comes from the comparison between non-absolute and absolute cellular algebras. 

\medskip

We also explain how to recover the formal moduli problem associated to a \textit{homotopy complete} dg $\C^*$-algebras via an explicit functor with values in simplicial sets, using the theory of mapping of coalgebras of \cite{grignou2022mapping} and the  integration functor for absolute partition $\mathcal{L}_\infty$-algebras constructed in \cite{lietheoryp}. 

\medskip

Finally, we give examples in Section \ref{Section: Examples}. In characteristic zero, our results recover those of Calaque--Campos--Nuiten in \cite{CCN19}, and in particular the Lurie--Pridham theorem, as explained before. Over a positive characteristic field, we obtain \textit{directly}, via our methods, point-set models that encode formal moduli problems of $\mathbb{E}_\infty$-algebras, which recovers the results of \cite{brantnermathew} and of \cite{pdalgebras}. We also recover the fact that formal moduli problems over $\mathbb{E}_k$-algebras are encoded by $\mathbb{E}_k$-algebras, where $\mathbb{E}_k$ is a model for the chains of the little $k$-disks operad. This was shown by Lurie in \cite[Section 3 and 4]{Lurie11}. Another example is given by permutative deformation problems in positive characteristic: we show that these are encoded by partition pre-Lie algebras. These are divided power algebras over the pre-Lie operad tensored with the linear dual of the surjections cooperad of \cite{pdalgebras}. This indicates that operadic deformation complexes possess a richer structure than just the pre-Lie product when working over a positive characteristic field. 

\medskip

Finally, let us point out that all the results of this paper extend \textit{mutatis mutandis} to coloured operads, but for simplicity's sake, we write them in the non-colored case. The presentation results of \cite{grignou2021algebraic} are valid over any ring, therefore the only obstruction to further generalizing these methods in mixed characteristic, or even over the integers is given by the need of a more general homotopical operadic calculus as the one developped in \cite{premierpapier}. 

\subsection*{Acknowledgements}
We would like to thank Lukas Brantner, Damien Calaque, Ricardo Campos, Geoffroy Horel, Joost Nuiten, Jon Pridham and Jérôme Scherer for stimulating discussions and helpful comments. We also thank Damien Calaque and Christina Kapatsori for helpful suggestions on the previous versions.

\subsection*{Conventions}
The base category we consider is the category of differential graded modules over a field $\kk$ of any characteristic, choosing the \textit{homological convention}. It forms a symmetric monoidal category, where one can consider operads and cooperads. We refer to \cite[Section 1]{premierpapier} for more precise recollections on this setup.

\medskip

Let $\mathcal{C}$ be a category and let $\mathrm{W}$ be a class of arrows in $\mathcal{C}$. We will denote $\mathcal{C}[\mathrm{W}^{-1}]$ the $\infty$-category obtained by localizing $\mathcal{C}$ at $\mathrm{W}$. When working at the $\infty$-categorical level, limits and colimits should be understood as meaning homotopy limits and colimits. We will avoid the notation "$[\mathrm{W}^{-1}]$" when considering $\infty$-categories defined by universal $\infty$-categorical properties; e.g.: Artinian algebras, coArtinian algebras, cellular algebras, etc.

\medskip

\section{Equivalent definitions of formal moduli problems of algebras over an operad}

\medskip

\subsection{First definitions}
For the rest of this section, let $\operad P$ be an augmented $\mathbb S$-projective dg operad $\PP$. An augmentation is the data of a morphism of dg operads $\eta: \PP \longrightarrow \III$. And by $\mathbb S$-projective we mean that its underlying dg $\mathbb S$-module is cofibrant for the projective model structure. This last hypothesis is necessary in order to make sense of the homotopy theory of dg $\mathcal{P}$-algebras since we are working over a field $\kk$ of any characteristic. 

\begin{notation}
Let $n$ be in $\mathbb{N}$. We denote by $S^n$ the dg module given by 
\[
(S^n)_m = 
\begin{cases}
\kk \text{ if } \text{ and } n=m;
\\
0 \text{ otherwise.}
\end{cases}
\]

Likewise, we will denote $D^n$ the dg module which is given by $\kk$ is degrees $n$ and $n-1$, where the differential is given by the identity map. 
\end{notation}

The augmentation morphism $\eta: \PP \longrightarrow \III$ induces a Quillen adjunction
\[
\begin{tikzcd}[column sep=7pc,row sep=3pc]
\mathsf{dg}~\PP\text{-}\mathsf{alg} \arrow[r, shift left=1.1ex, "\mathrm{indec}"{name=F}] 
&\mathsf{dg}~\mathsf{mod} ~, \arrow[l, shift left=.75ex, "\mathrm{triv}"{name=U}] \arrow[phantom, from=F, to=U, , "\dashv" rotate=-90]
\end{tikzcd}
\]

where the functor $\mathrm{indec}(-)$ is given by indecomposable elements of a dg $\PP$-algebra and the functor $\mathrm{triv}(-)$ endows any dg module with the structure of a trivial $\PP$-algebra. In particular, the dg modules $S^n, D^n$ can be endowed with a trivial dg $\PP$-algebra structure for all $n$ in $\mathbb{Z}$.

\medskip

Since $\mathcal{P}$ is a $\mathbb{S}$-projective dg operad, the category of dg $\mathcal{P}$-algebras can be endowed with a \textit{semi-model structure}, where weak-equivalences are given by quasi-isomorphisms and where fibrations are given by degree-wise epimorphisms. See \cite{Spitzweck} for more details.

\medskip

This semi-model structure presents the $\infty$-category of dg $\mathcal{P}$-algebras localized at the class of quasi-isomorphisms.

\begin{definition}[Artinian $\PP$-algebras]
The $\infty$-category of \textit{Artinian} $\PP$\textit{-algebras} is the smallest full subcategory of the $\infty$-category of dg $\PP$-algebras such that  

\medskip

\begin{enumerate}
\item it contains $S^n$ for any $n \geq 0$,

\medskip

\item for any Artinian $\PP$-algebra $A$ and any morphism $A \longrightarrow S^n$, where $n \geq 1$, the pullback $A \times_{S^n} 0$ is also Artinian.
\end{enumerate}
\end{definition}

\begin{remark}
Notice that the zero algebra is the terminal object in the category of dg $\mathcal{P}$-algebras, while the initial object is $\mathcal P(0)$.
\end{remark}

\begin{definition}[Formal moduli problems over $\PP$-algebras]
The $\infty$-category of \textit{formal moduli problem over} $\PP$-algebras is defined as the $\infty$-category of functors 
\[
F: \mathsf{Art}~\PP\text{-}\mathsf{alg}\longrightarrow \mathcal{S}
\]

from the $\infty$-category of Artinian $\PP$-algebras to the $\infty$-category of spaces satisfying the following conditions:

\begin{enumerate}
\item $F(0) \simeq \{*\}$, where $0$ is the zero $\PP$-algebra.

\medskip

\item $F$ preserves pullback diagrams of the form
\[
\begin{tikzcd}
A' \arrow[r] \arrow[d] \arrow[dr, phantom, "\lrcorner", very near start]
&0 \arrow[d] \\
A \arrow[r]
&S^n~,
\end{tikzcd}
\]

where $n \geq 1$.
\end{enumerate}

We denote this $\infty$-category by $\mathsf{FMP}_{\mathcal{P}}$.
\end{definition}

\subsection{Cofibrant replacements using quasi-planar cooperads} 
The definition of the $\infty$-category of formal moduli problems does not depend on a specific choice of $\mathbb{S}$-projective dg operad.

\begin{lemma}
Let $f: \mathcal{Q} \qi \mathcal{P}$ be a quasi-isomorphism between two $\mathbb{S}$-projective dg operads. It induces an equivalence of $\infty$-categories 
\[
\mathsf{FMP}_{\mathcal{Q}} \simeq \mathsf{FMP}_{\mathcal{P}}~.
\]

\end{lemma}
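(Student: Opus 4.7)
The plan is to deduce this from the standard fact that a quasi-isomorphism between $\mathbb{S}$-projective dg operads induces a Quillen equivalence on the corresponding categories of algebras, and then to check that this equivalence restricts to the Artinian sub-$\infty$-categories, from which the equivalence of formal moduli problems follows by precomposition.

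First, I would invoke the classical result (see for instance Hinich or Fresse) that the morphism $f: \mathcal{Q} \qi \mathcal{P}$ induces a Quillen adjunction $f_! \dashv f^*$ between the semi-model categories $\mathsf{dg}~\mathcal{Q}\text{-}\mathsf{alg}$ and $\mathsf{dg}~\mathcal{P}\text{-}\mathsf{alg}$, where $f^*$ denotes restriction along $f$. Since both operads are $\mathbb{S}$-projective and $f$ is a quasi-isomorphism, this Quillen adjunction is a Quillen equivalence. Localizing at quasi-isomorphisms, we obtain an equivalence of $\infty$-categories
\[
\Phi: \mathsf{dg}~\mathcal{Q}\text{-}\mathsf{alg}\,[\mathrm{Q.iso}^{-1}] \simeq \mathsf{dg}~\mathcal{P}\text{-}\mathsf{alg}\,[\mathrm{Q.iso}^{-1}]~.
\]

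Second, I would check that this equivalence restricts to Artinian algebras. The inverse is given by the derived restriction functor $\mathbb{R}f^* = f^*$, since $f^*$ preserves all quasi-isomorphisms at the point-set level. As the augmentation of $\mathcal{Q}$ is the composite $\mathcal{Q} \xrightarrow{f} \mathcal{P} \to \mathcal{I}$, the functor $f^*$ sends the trivial $\mathcal{P}$-algebra $S^n$ to the trivial $\mathcal{Q}$-algebra $S^n$, for every $n \geq 0$. Moreover, being a right adjoint between combinatorial (semi)model categories, $f^*$ preserves all homotopy limits, in particular the pullbacks of the form $A\times_{S^n} 0$ that generate Artinian algebras. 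A straightforward induction on the generating process of $\mathsf{Art}~\mathcal{P}\text{-}\mathsf{alg}$ then shows that $f^*$ restricts to a functor $\mathsf{Art}~\mathcal{P}\text{-}\mathsf{alg} \to \mathsf{Art}~\mathcal{Q}\text{-}\mathsf{alg}$. Running the same argument with the inverse direction (equivalently, using that $\Phi$ is an equivalence and hence sends a generating set to a set with the same closure properties on the other side), we conclude that $\Phi$ restricts to an equivalence
\[
\Phi_{\mathrm{Art}}: \mathsf{Art}~\mathcal{Q}\text{-}\mathsf{alg} \simeq \mathsf{Art}~\mathcal{P}\text{-}\mathsf{alg}~.
\]

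Finally, precomposition with $\Phi_{\mathrm{Art}}$ induces an equivalence of functor $\infty$-categories with values in $\mathcal{S}$. Since $\Phi_{\mathrm{Art}}$ identifies the zero objects and the generating pullback diagrams on both sides, it preserves the two conditions defining formal moduli problems, giving the desired equivalence $\mathsf{FMP}_{\mathcal{Q}} \simeq \mathsf{FMP}_{\mathcal{P}}$.

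The main obstacle is the inductive step showing that $\Phi$ and its inverse both restrict to Artinian algebras. One has to track the cell-like structure of Artinian algebras through the derived equivalence and, in particular, verify that the generators $S^n$ are matched on both sides. This is where the compatibility of augmentations and the fact that restriction preserves trivial algebras and all limits are crucial; the rest of the argument is formal.
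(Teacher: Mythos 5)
Your proposal is correct and follows essentially the same route as the paper: invoke the Quillen equivalence of semi-model categories induced by a quasi-isomorphism of $\mathbb{S}$-projective operads (Fresse), observe that the derived restriction functor preserves the generators $S^n$ and the relevant pullbacks so that the equivalence restricts to Artinian algebras, and conclude by precomposition. You simply spell out the details (augmentation compatibility, preservation of limits, the induction over the generating process) that the paper compresses into a single sentence.
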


\begin{proof}
The quasi-isomorphism $\mathcal{Q} \qi \mathcal{P}$ induces a Quillen equivalence of semi-model structures 
\[
\begin{tikzcd}[column sep=7pc,row sep=3pc]
\mathsf{dg}~\PP\text{-}\mathsf{alg}  \arrow[r, shift left=1.1ex, "f^*"{name=F}] 
&\mathsf{dg}~\mathcal{Q}\text{-}\mathsf{alg}   ~, \arrow[l, shift left=.75ex, "f_!"{name=U}] \arrow[phantom, from=F, to=U, , "\dashv" rotate=90]
\end{tikzcd}
\]

since both dg operads are $\mathbb{S}$-projective (see \cite[Theorem 16.A]{Fresse}). Moreover, the $\infty$-functor $\mathbb R f^*$ derived from $f^*$ clearly preserves the objects $S^n$ for any $n \geq 0$; thus it induces an equivalence between the $\infty$-categories of Artinian $\mathcal{P}$-algebras and Artinian $\mathcal{Q}$-algebras.
\end{proof}

Therefore, up to quasi-isomorphisms of dg operads, one can choose the operad $\mathcal{P}$ to be cofibrant in the category of dg operads endowed with the semi-model structure of \cite[Chapter 12]{Fresse}. Furthermore, in order to apply the homotopical operadic calculus developed in \cite{premierpapier}, we can choose $\mathcal{P}$ to be of the form $\Omega \C$, where $\C$ is a \textit{quasi-planar} conilpotent dg cooperad. Let us recall the definition of a quasi-planar dg cooperads.

\begin{definition}[Quasi-planar dg cooperad, \cite{premierpapier}]
A \textit{pseudo-planar} conilpotent dg cooperad $\C$ is the data $(\C,\C_\pl,\varphi_\C)$ of a conilpotent dg cooperad $\C$ and graded planar conilpotent cooperad $\C_\pl$, together with a isomorphism $\varphi_\C$ of graded conilpotent cooperads
    \[
    \varphi_\C: \C_\pl \otimes \mathbb S \cong \C.
    \]
    It is called \textit{quasi-planar} if it is the colimit of a cocontinuous directed diagram of pseudo-planar dg conilpotent cooperads
    \[
    \operad I = \C^{(0)} \longrightarrow \C^{(1)}
    \longrightarrow \cdots \C^{(i)} \longrightarrow \cdots
    \]
    indexed by an ordinal $\alpha$, where for every $i, i+1 \in \alpha$, the map $\operad C^{(i)} \longrightarrow \operad C^{(i+1)}$ is an arity-wise degree-wise injection and such that
\begin{enumerate}
\item the decomposition map
\[
\begin{tikzcd}
\operad C^{(i+1)} \arrow[r]
&\treemod \operad C^{(i+1)} \arrow[r]
&\overline \treemod \operad C^{(i+1)}
\end{tikzcd}
\]
factors through $\overline \treemod \operad C^{(i)}$;
\item the restriction of the coderivation of $\operad C^{(i+1)}$ to $\operad C_\pl^{(i+1)} \otimes 1$ factors through
        
        \[
        \operad C_\pl^{(i+1)} \otimes 1 \longrightarrow \operad C_\pl^{(i+1)} \otimes 1 + \operad C^{(i)} \hookrightarrow \operad C^{(i+1)}.
        \]
        \vspace{0.1pc} 
In other words, the differential of $\gr_{i+1} ~\operad C = \left(\operad C_\pl^{(i+1)}/ \operad C_\pl^{(i)}\right) \otimes \mathbb S$ has the form $d_\pl \otimes \id_{\mathbb S}$.
\end{enumerate}
\end{definition}

In full generality, the operadic cobar construction $\Omega$ sends a conilpotent \textit{curved} cooperad $\C$ to a dg operad $\Omega \C$ whose underlying graded operad is the free operad $\treemod (s^{-1} \overline{\C})$ generated by the desuspension of coaugmentation ideal of $\C$. The differential is constructed using the coderivation, the decomposition maps and the curvature of $\C$. It admits a right adjoint, the curved bar functor $\mathrm{B}_{\mathrm{crv}}$, as shown in \cite{unitalalgebras}. See also \cite{premierpapier}. 

\medskip

If we restrict to conilpotent \textit{dg} cooperads $\C$, then $\Omega \C$ becomes an augmented dg operad. Furthermore, the restricted cobar $\Omega$ has a right adjoint, given by the (dg) bar construction $\mathrm{B}$, which is the one we will use in this paper. For any augmented dg operad $\operad P$, its bar construction is a conilpotent dg cooperad whose underlying graded cooperad is given by $\treemod (s\overline{\operad P})$, the cofree conilpotent cooperad generated by the suspension of the augmentation ideal of $\operad P$. Its differential is constructed using the derivation and the composition maps of $\operad P$. See for instance \cite[Chapter 6]{LodayVallette} for more details.

\begin{notation}
We will denote by $\operad E$ the Barratt-Eccles dg operad introduced in \cite{BergerFresse}. Recall that it is a particular choice of a $\mathbb{S}$-projective resolution of the commutative operad $\mathcal{C}\mathrm{om}$. For any dg operad $\operad P$, there is a quasi-isomorphism of dg operads $\operad E \otimes \operad P \qi \operad P$.
\end{notation}

\begin{proposition}[{\cite[Proposition 11, Proposition 12, Proposition 17]{premierpapier}}]\label{prop: cofibrant replacement for operads}
Let $\operad P$ be an augmented dg operad. There is a canonical structure of quasi-planar cooperad on the conilpotent dg cooperad $\mathrm{B}(\operad E \otimes \operad P)$. Its cobar construction $\Omega \mathrm{B}(\operad E \otimes \operad P)$ is cofibrant in the semi-model structure of dg operads. Finally, the canonical map
    \[
    \epsilon_{\operad P}: \Omega \mathrm{B}(\operad E \otimes \operad P) \qi \operad P
    \]
is a quasi-isomorphism of dg operads, and provides us with a cofibrant replacement for $\operad P$.
\end{proposition}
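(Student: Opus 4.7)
The plan is to prove the three assertions in sequence, exploiting the fact that the Barratt--Eccles operad is $\mathbb{S}$-free in each arity.

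For the existence of the quasi-planar structure, I would first observe that $\operad E(n)$, being the normalized chains on a contractible free $\mathbb{S}_n$-simplicial set, splits as $E_\pl(n) \otimes \kk[\mathbb{S}_n]$ for a graded planar submodule $E_\pl(n)$. Taking the arity-wise tensor product with $\operad P$, the diagonal $\mathbb{S}_n$-action on $\operad E(n) \otimes \operad P(n)$ can be absorbed into the free factor, yielding a canonical identification $(\operad E \otimes \operad P)_\pl \otimes \mathbb{S} \cong \operad E \otimes \operad P$. The bar construction $\mathrm{B}(\operad E \otimes \operad P)$ is, as a graded cooperad, the cofree conilpotent cooperad $\treemod^c(s\overline{\operad E \otimes \operad P})$, and therefore inherits a pseudo-planar structure from the planar generators. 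I would then filter $\mathrm{B}(\operad E \otimes \operad P)$ by the coradical filtration (equivalently, by the number of internal vertices of trees): the first compatibility condition is automatic because cooperadic decomposition strictly decreases tree size, while the second requires checking that the bar differential, built from infinitesimal compositions in $\operad E \otimes \operad P$, respects the planar decomposition modulo the previous filtration stage. This last check is the combinatorial heart of the construction.

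For the cofibrancy of $\Omega \mathrm{B}(\operad E \otimes \operad P)$, I would invoke the general principle, developed in \cite{premierpapier}, that $\Omega$ sends quasi-planar dg cooperads to cofibrant dg operads in the Hinich--Spitzweck semi-model structure. Concretely, the filtration $\operad C^{(i)}$ transfers to a cellular presentation of $\Omega \operad C$ as a transfinite composition of pushouts along generating cofibrations of the form $\treemod(s^{-1}\operad C_\pl^{(i)} \otimes \mathbb{S}) \hookrightarrow \treemod(s^{-1}\operad C_\pl^{(i+1)} \otimes \mathbb{S})$; the planar structure ensures these are genuinely $\mathbb{S}$-free cells, which is what is needed to obtain cofibrancy in positive characteristic.

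For the final assertion, I would factor the counit as
\[
\Omega \mathrm{B}(\operad E \otimes \operad P) \qi \operad E \otimes \operad P \qi \operad P.
\]
The second map is a quasi-isomorphism by the remark in the notation box (a direct consequence of $\operad E \qi \Com$ and the Künneth formula over a field). The first map is the bar-cobar counit for $\operad E \otimes \operad P$; since this operad is $\mathbb{S}$-free (hence $\mathbb{S}$-projective), the standard bar filtration argument produces a contracting homotopy showing that bar-cobar resolves it in any characteristic.

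I expect the main obstacle to be the verification of the second compatibility axiom for quasi-planarity, namely that the bar differential preserves the planar part up to lower filtration. All other ingredients either follow formally from the construction or are instances of results already established in \cite{premierpapier}.
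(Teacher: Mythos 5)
The paper does not prove this proposition itself --- it imports it wholesale from \cite{premierpapier} --- so your proposal can only be measured against the strategy that reference (and the hints elsewhere in this paper) indicate. Your overall architecture is the right one: split $\operad E(n)$ as a free graded $\kk[\mathbb S_n]$-module to get a pseudo-planar structure on $\operad E \otimes \operad P$ and hence on the cofree cooperad underlying $\mathrm{B}(\operad E \otimes \operad P)$; deduce cofibrancy of the cobar construction from the general ``$\Omega$ of quasi-planar is cofibrant'' principle; and factor the counit as $\Omega\mathrm{B}(\operad E \otimes \operad P) \qi \operad E \otimes \operad P \qi \operad P$, using $\mathbb S$-freeness of $\operad E \otimes \operad P$ for the first arrow and the K\"unneth formula for the second. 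Those two latter steps are fine.

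The genuine gap is in your choice of filtration for the quasi-planar structure. You propose the coradical filtration by the number of internal vertices, and you correctly isolate condition (2) of the definition as the point to check --- but with that filtration the check \emph{fails}. The bar differential is a sum $d_1 + d_2$, where $d_2$ contracts edges (and hence strictly drops the vertex count, so it is harmless) but $d_1$ is induced by the internal differential of $\operad E \otimes \operad P$ and \emph{preserves} the number of vertices. Condition (2) would therefore force $d_1$ to preserve the planar part $\operad C_\pl^{(i+1)} \otimes 1$ on the nose, and it does not: already in arity $2$, with the planar basis of $\operad E(2)$ given by simplices $(\sigma_0,\dots,\sigma_{k-1},e)$, the face that deletes the last entry produces a simplex whose last entry is no longer the identity, i.e.\ a genuinely non-planar term in the \emph{same} coradical stage. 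The fix, which is the one the authors themselves use when they exhibit the quasi-planar structure on $\mathrm{B}\mathcal{E}^{\mathrm{nu}}_k$ in Lemma \ref{lemma: E_k tempered and quasi-planar}, is to refine the filtration by combining the weight of trees with the homological degree of $\operad E$: the offending terms of $d_1$ lower the homological degree of the $\operad E$-factor and so land in the previous filtration stage, as condition (2) requires, while condition (1) survives the refinement since decomposition preserves homological degree and strictly decreases weight on the reduced part. You flagged this verification as ``the combinatorial heart'' but deferred it; with the filtration as you stated it, carrying it out would show the axiom is violated rather than satisfied.
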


\begin{remark}
In \cite{premierpapier}, we considered $\mathrm{B}_{\mathrm{crv}}(\operad E \otimes \operad P)$, where $\mathrm{B}_{\mathrm{crv}}$ is the curved bar construction, instead of $\mathrm{B}(\operad E \otimes \operad P)$, in order to work in full generality. Nevertheless, the same arguments, \textit{mutatis mutandis}, also work for $\mathrm{B}(\operad E \otimes \operad P)$.
\end{remark}

Hence, by Proposition \ref{prop: cofibrant replacement for operads}, we can restrict to dg operads of the form $\Omega\C$ for any quasi-planar conilpotent dg cooperad. Notice that we can also assume $\C$ to be $0$-reduced since we assumed $\PP$ to be $0$-reduced. 

\begin{remark}
Over a field of characteristic zero, $\C$ can be assumed to be any conilpotent dg cooperad, as any dg operad admits a cofibrant resolution of the form $\Omega\C$, where $\C$ is a conilpotent dg cooperad.
\end{remark}


\subsection{Strictly Artinian algebras}
From now on, let us fix a quasi-planar conilpotent dg cooperad $\C$. Our goal will be to describe the $\infty$-category of formal moduli problems over $\Omega \C$. Any dg operad of the form $\Omega \C$ is admissible, thus the semi-model category structure on dg $\Omega \C$-algebras actually forms a model category structure. Moreover, there is a bar-cobar adjunction relative to the universal twisting morphism $\iota: \C \longrightarrow \Omega \C$ 
\[
\begin{tikzcd}[column sep=5pc,row sep=3pc]
          \mathsf{dg}~\C\text{-}\mathsf{coalg} \arrow[r, shift left=1.1ex, "\Omega_{\iota}"{name=F}] & \mathsf{dg}~\Omega \C\text{-}\mathsf{alg}, \arrow[l, shift left=.75ex, "\mathrm{B}_{\iota}"{name=U}]
            \arrow[phantom, from=F, to=U, , "\dashv" rotate=-90]
\end{tikzcd}
\]

which becomes a Quillen equivalence when one considers the transferred model category structure along this adjunction on the category of dg $\C$-coalgebras. See \cite[Section 5]{premierpapier}.

\begin{definition}[Square-zero extension]
Let $f: B \longrightarrow A$ be a morphism of dg $\Omega \C$-algebras. It is a \textit{square-zero extension} if $f$ fits in a homotopy pullback 
\[
\begin{tikzcd}
S^n \arrow[r] \arrow[d]  \arrow[dr, phantom, "\lrcorner", very near start]
&B \arrow[d,"f"] \\
0\arrow[r]
&A~,
\end{tikzcd}
\]

for some $n \geq 0$. 
\end{definition}

\begin{definition}[Strictly Artinian algebra]
Let $A$ be a dg $\Omega \C$-algebra. It is \textit{strictly Artinian} if the terminal morphism $A \longrightarrow 0$ admits a factorization 
\[
A = A^{(n)} \longrightarrow \cdots \longrightarrow A^{(i)} \longrightarrow A^{(i-1)} \longrightarrow \cdots \longrightarrow A^{(0)} = 0~,
\]

where each morphism $A^{(i)} \longrightarrow A^{(i-1)}$ is a \textit{square-zero extension}.
\end{definition}

\begin{proposition}\label{Prop: Artinian = Strictly Artinian}
There is an equivalence of $\infty$-categories between the $\infty$-category of Artinian $\Omega \C$-algebras and the $\infty$-category of strictly Artinian $\Omega \C$-algebras. 
\end{proposition}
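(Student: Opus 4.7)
The plan is to show that each of the two $\infty$-categories sits inside the other as a full $\infty$-subcategory of dg $\Omega\C$-algebras.

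For the inclusion of strictly Artinian into Artinian algebras, I would argue by induction on the length $m$ of a filtration $A = A^{(m)} \to \cdots \to A^{(0)} = 0$ witnessing strict Artinianity. The base case $m = 0$ requires $0$ to be Artinian, which holds because $0$ is the terminal object of the $\infty$-category and thus lies in any full subcategory closed under the prescribed pullbacks (as the empty homotopy limit). For the inductive step, each square-zero extension $A^{(i)} \to A^{(i-1)}$, having homotopy fiber $S^k$ for some $k \geq 0$, is classified by a map $A^{(i-1)} \to S^{k+1}$ realising $A^{(i)} \simeq A^{(i-1)} \times_{S^{k+1}} 0$; since $k + 1 \geq 1$, the closure axiom in the definition of Artinian algebras yields that $A^{(i)}$ is Artinian.

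For the converse inclusion, I would verify that strictly Artinian algebras form a full $\infty$-subcategory satisfying both defining closure conditions of Artinian algebras; by minimality of the latter this forces the inclusion. Each generator $S^n$ for $n \geq 0$ is strictly Artinian via the length-one filtration $S^n \to 0$, which is tautologically a square-zero extension with fiber $S^n$. For the pullback closure, given $A$ strictly Artinian together with a morphism $A \to S^n$ for $n \geq 1$, set $B \coloneqq A \times_{S^n} 0$. The pasting lemma for pullbacks identifies the fiber of $B \to A$ with the fiber of $0 \to S^n$, namely $\Omega S^n$. The adjunction $\mathrm{indec} \dashv \mathrm{triv}$ of the previous subsection, together with the fact that the forgetful functor to chain complexes preserves limits, implies that $\Omega S^n$ is trivial with underlying complex $S^{n-1}$, so that $\Omega S^n \simeq S^{n-1}$. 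Hence $B \to A$ is itself a square-zero extension by $S^{n-1}$ (with $n - 1 \geq 0$), and prepending it to any filtration of $A$ produces one exhibiting $B$ as strictly Artinian.

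The main obstacle is the classifying-map identification used in the first direction: given only that $B \to A$ has homotopy fiber $S^k$, one must produce a map $A \to S^{k+1}$ realising $B$ as $A \times_{S^{k+1}} 0$. This is an instance of the deformation-theoretic principle that square-zero extensions of an $\Omega\C$-algebra by the trivial module $S^k$ are classified, as a space, by maps into the shift $S^{k+1}$. In the present cofibrant setting, this can be handled using the operadic bar-cobar adjunction $\Omega_\iota \dashv \mathrm{B}_\iota$ recalled above, or by standard obstruction theory for algebras over operads. Once this identification is in hand, all remaining steps are formal manipulations of homotopy pullback squares.
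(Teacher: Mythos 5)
Your proposal is correct in outline and establishes the same two inclusions as the paper, but the two halves are handled differently. For the closure of strictly Artinian algebras under the generating pullbacks, you compute directly that the homotopy fiber of $A \times^h_{S^n} 0 \to A$ is the loop object $0 \times^h_{S^n} 0$ of the trivial algebra $S^n$, identified with the trivial algebra $S^{n-1}$ because $\mathrm{triv}$ is a right Quillen functor; the paper instead transports the pullback through the bar--cobar adjunction $\Omega_\iota \dashv \mathrm{B}_\iota$ and identifies the result with $\mathrm{B}_\iota B$ for an explicit square-zero extension $B \to A$. Your route is more economical, and since fibrations and weak equivalences of $\Omega\C$-algebras are detected on underlying complexes one can even read off that the strict kernel of $A \times_{S^n} D^n \to A$ is the trivial algebra $S^{n-1}$; what the paper's detour buys is a coalgebra-side model of the argument that it reuses verbatim in Lemma \ref{Prop: coArtinian have finite dimensional homology}. (Your computation also corrects a slip in the paper, which asserts the kernel of this extension to be $S^n$ rather than $S^{n-1}$.) For the other inclusion, you and the paper rely on the same non-formal input: that a map of connective $\Omega\C$-algebras whose homotopy fiber is the trivial algebra $S^k$ is a base change of $0 \to S^{k+1}$ along some algebra map $A^{(i-1)} \to S^{k+1}$. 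You correctly isolate this as the crux and gesture at bar--cobar or obstruction-theoretic arguments; the paper simply cites \cite[Example 2.3]{CCN19} (the operadic analogue of Lurie's Theorem 7.4.1.26), so neither text proves it --- but be aware that this classification is a genuine theorem using connectivity, not a formal manipulation of pullback squares, so your induction is only as complete as that reference. One small repair: the zero algebra is not Artinian ``as the empty homotopy limit,'' since the closure conditions only involve the specified pullbacks; rather, $0 \simeq S^1 \times^h_{S^1} 0$ along the identity of the generator $S^1$ exhibits it as Artinian.
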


\begin{proof}
It can easily shown that any strictly Artinian $\Omega \C$-algebra is Artinian, see \cite[Example 2.3]{CCN19}. In order to show that any Artinian $\Omega \C$-algebra is quasi-isomorphic to a strictly Artinian $\Omega \C$-algebra, we follow the same steps as in \cite[Lemma 5.12]{CCN19}. It suffices to show that strictly Artinian $\Omega \C$-algebras are also closed under homotopy pullbacks of the form $0 \longrightarrow S^n$ for $n \geq 1$. Let $A$ be a strictly Artinian $\Omega \C$-algebra, we consider the following (homotopy) pullback 
\[
\begin{tikzcd}
Y \arrow[r] \arrow[d]  \arrow[dr, phantom, "\lrcorner", very near start]
&D^n \arrow[d] \\
\Omega_\iota \mathrm{B}_{\iota} A\arrow[r]
&S^n~.
\end{tikzcd}
\]

We want to show that the map $Y \longrightarrow \Omega_\iota \mathrm{B}_{\iota} A$ is naturally quasi-isomorphic to a square-zero extension $B \longrightarrow A$ with kernel $S^n$. Using the bar-cobar adjunction relative to $\iota$, we can replace the above pullback by the following pullback  
\[
\begin{tikzcd}
X \arrow[r] \arrow[d]  \arrow[dr, phantom, "\lrcorner", very near start]
&\C \circ (D^n) \arrow[d] \\
\mathrm{B}_{\iota} A \arrow[r]
&\C \circ (S^n)~,
\end{tikzcd}
\]

in the category of dg $\C$-coalgebras. We conclude by the fact that the map $X \longrightarrow \mathrm{B}_{\iota} A$ is naturally isomorphic to a map $\mathrm{B}_{\iota} B \longrightarrow \mathrm{B}_{\iota} A$ where $B$ is the desired square-zero extension.
\end{proof}

\begin{remark}
The original statement concerning Artinian $\mathbb{E}_k$-algebras can be found in \cite[Theorem 7.4.1.26]{HigherAlgebra}.
\end{remark}

\begin{corollary}\label{Prop: Artinians have finite dimensional homology}
Let $A$ be an Artinian $\Omega \C$-algebra. The homology of $A$ is degree-wise finite dimensional and concentrated in finitely many non-negative homological degrees.
\end{corollary}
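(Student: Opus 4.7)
The plan is to reduce to the strictly Artinian case via Proposition \ref{Prop: Artinian = Strictly Artinian}, and then proceed by induction on the length of the tower of square-zero extensions. Since the homological invariants in the conclusion (degree-wise finite dimension, concentration in finitely many non-negative degrees) are invariant under quasi-isomorphism, there is no loss of generality in assuming that $A$ is itself strictly Artinian. Thus $A$ fits in a finite tower
\[
A = A^{(k)} \longrightarrow A^{(k-1)} \longrightarrow \cdots \longrightarrow A^{(0)} = 0,
\]
where each $A^{(i)} \longrightarrow A^{(i-1)}$ is a square-zero extension with fiber $S^{n_i}$ for some $n_i \geq 0$.

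I would argue by induction on $k$. The base case $k = 0$ is trivial since $H_\ast(0) = 0$. For the inductive step, observe that the forgetful functor from dg $\Omega \C$-algebras to chain complexes is a right Quillen functor, since the semi-model structure is transferred from $\mathsf{dg}\text{-}\mathsf{mod}$. In particular, it preserves homotopy pullbacks, so the defining homotopy pullback of the square-zero extension produces a homotopy fiber sequence
\[
S^{n_i} \longrightarrow A^{(i)} \longrightarrow A^{(i-1)}
\]
of underlying chain complexes, hence a long exact sequence in homology
\[
\cdots \longrightarrow H_{m+1}\bigl(A^{(i-1)}\bigr) \longrightarrow H_m\bigl(S^{n_i}\bigr) \longrightarrow H_m\bigl(A^{(i)}\bigr) \longrightarrow H_m\bigl(A^{(i-1)}\bigr) \longrightarrow \cdots
\]

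Now $H_\ast(S^{n_i})$ is one-dimensional in degree $n_i \geq 0$ and vanishes elsewhere, while by induction $H_\ast(A^{(i-1)})$ is degree-wise finite dimensional and concentrated in finitely many non-negative degrees. Combining these two facts via the long exact sequence, $H_\ast(A^{(i)})$ inherits the same property, which closes the induction. The only point that merits a line of justification is the preservation of homotopy pullbacks by the forgetful functor, but this is automatic from the transferred model structure; beyond that, the argument is a routine dévissage with no real obstacle.
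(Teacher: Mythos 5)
Your proposal is correct and follows the same route as the paper: reduce to the strictly Artinian case via Proposition \ref{Prop: Artinian = Strictly Artinian} and then read off the homological properties from the finite tower of square-zero extensions. The paper simply asserts this last step in one line, whereas you spell out the dévissage with the long exact sequences; this is a faithful expansion of the intended argument, not a different one.
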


\begin{proof}
It follows from Proposition \ref{Prop: Artinian = Strictly Artinian}, since the homology of a strictly Artinian algebra is necessarily a perfect dg module concentrated in non-positive degrees.
\end{proof}

\begin{remark}
The $\infty$-category of Artinian $\Omega \C$-algebras is a full sub-$\infty$-category of the $\infty$-category of dg $\Omega \C$-algebras with degree-wise finite dimensional homology.
\end{remark}

\subsection{The coalgebraic point of view of formal moduli problems} The goal of this section is to define formal moduli problems exclusively in terms of coalgebras. When one considers formal moduli problems over $\mathbb{E}_\infty$-algebras, this point of view is very close to the one expressed in \cite{Hinich}: indeed, it shows that $\mathbb{E}_\infty$-coalgebras can be thought as \textit{derived formal affine stacks}. However, here we use \textit{all} coalgebras, which are not necessarily conilpotent; it agrees with the definitions in \cite{takeushi} in the non-derived case. From this point of view, formal moduli problems are defined as functors that assign to some "elementary" derived formal stacks a space, together with a "gluing condition". 

\medskip

The dg operad $\Omega\C$ is cofibrant, thus it is coadmissible. This means that the category of dg $\Omega\C$-coalgebras admits a \textit{left-transferred model structure} from dg modules, where weak-equivalences are given by quasi-isomorphisms and where cofibrations are given by degree-wise monomorphisms. 

\medskip

Let $\eta: \Omega\C \longrightarrow \III$ be the augmentation morphisms. It induces a Quillen adjunction  
\[
\begin{tikzcd}[column sep=7pc,row sep=3pc]
\mathsf{dg}~\Omega\C\text{-}\mathsf{coalg} \arrow[r, shift left=1.1ex, "\mathrm{prim}"{name=F}] 
&\mathsf{dg}~\mathsf{mod} ~, \arrow[l, shift left=.75ex, "\mathrm{triv}"{name=U}] \arrow[phantom, from=F, to=U, , "\dashv" rotate=90]
\end{tikzcd}
\]

where the functor $\mathrm{prim}(-)$ is given by primitive elements of a dg $\Omega\C$-coalgebra and the functor $\mathrm{triv}(-)$ endows any dg module with the structure of a trivial $\Omega\C$-coalgebra structure. In particular, the dg modules $S^n,D^n$ can be endowed with a trivial $\Omega\C$-coalgebra structure for all $n$ in $\mathbb{Z}$.

\begin{definition}[coArtinian $\Omega \C$-coalgebras]
The $\infty$-category of \textit{coArtinian} $\Omega\C$\textit{-coalgebras} is the smallest full subcategory of the $\infty$-category of dg $\Omega\C$-coalgebras such that  

\medskip

\begin{enumerate}
\item it contains $S^n$ for any $n \leq 0$,

\medskip

\item for any coArtinian $\Omega\C$-coalgebra $C$ and any morphism $S^n \longrightarrow C$, where $n \leq -1$, the pushout $C \amalg_{S^n} 0$ is also coArtinian.
\end{enumerate}
\end{definition}

\begin{remark}
The $\infty$-category of coArtinian coalgebras is obtained by iteratively gluing spheres (viewed as a formal derived space) endowed with the trivial coalgebra structure.
\end{remark}

\begin{definition}[Formal moduli problems over $\Omega \C$-coalgebras]
The $\infty$-category of \textit{formal moduli problem over} $\Omega\C$-coalgebras is defined as the $\infty$-category of functors 
\[
F: \mathsf{coArt}~\Omega\C\text{-}\mathsf{coalg} \longrightarrow \mathcal{S}
\]

from the $\infty$-category of coArtinian $\Omega\C$-coalgebras to the $\infty$-category of spaces satisfying the following conditions:

\begin{enumerate}
\item $F(0) \simeq \{*\}$, where $0$ is the zero $\Omega\C$-coalgebra.

\medskip

\item $F$ sends pushouts diagrams of the form
\[
\begin{tikzcd}
S^n \arrow[r] \arrow[d] \arrow[dr, phantom, "\ulcorner", very near end]
&0 \arrow[d] \\
C \arrow[r]
&C' ~,
\end{tikzcd}
\]

to pullbacks in spaces, where $n \leq -1$.
\end{enumerate}

We denote this $\infty$-category by $\mathsf{FMP}_{\Omega\C}^c$.
\end{definition}

Our goal is to show that the above $\infty$-category is equivalent the $\infty$-category of formal moduli problems over $\Omega\C$-algebras.

\begin{lemma}\label{Prop: coArtinian have finite dimensional homology}
Let $C$ be an coArtinian $\Omega\C$-coalgebra. The homology of $C$ is degree-wise finite dimensional and concentrated in finitely many non-positive homological degrees.
\end{lemma}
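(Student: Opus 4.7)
The plan is to proceed by induction along the inductive definition of the $\infty$-category of coArtinian coalgebras. By construction, every coArtinian $\Omega\C$-coalgebra $C$ is obtained by finitely many iterated pushouts of the form $C^{(i-1)} \leftarrow S^{n_i} \to 0$ with $n_i \leq -1$, starting from a generator $S^m$ with $m \leq 0$. It therefore suffices to check a base case and an inductive step. This is the clean dévissage argument dual to Corollary~\ref{Prop: Artinians have finite dimensional homology}, carried out directly on the cellular presentation of $C$ rather than by first passing to a "strictly coArtinian" model.

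The base case is immediate: for $C = S^m$ with $m \leq 0$, the homology is one-dimensional in the single non-positive degree $m$, so the claim holds.

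For the inductive step, suppose $C$ is coArtinian and satisfies the claim, and let $C' \simeq C \amalg^h_{S^n} 0$ with $n \leq -1$. The key point is that the forgetful functor
\[
U: \mathsf{dg}~\Omega\C\text{-}\mathsf{coalg} \longrightarrow \mathsf{dg}~\mathsf{mod}
\]
is left Quillen: it is left adjoint to the cofree $\Omega\C$-coalgebra functor, and by the definition of the left-transferred model structure it preserves (and creates) cofibrations and weak equivalences. Hence $U$ preserves homotopy pushouts, and the underlying chain complex of $C'$ is the homotopy cofiber of the underlying map $S^n \to UC$, giving the long exact sequence
\[
\cdots \longrightarrow H_{k+1}(C') \longrightarrow H_k(S^n) \longrightarrow H_k(C) \longrightarrow H_k(C') \longrightarrow H_{k-1}(S^n) \longrightarrow \cdots
\]
Since $H_*(S^n) \cong \kk$ is concentrated in the single degree $n \leq -1$, and $H_*(C)$ is by hypothesis degree-wise finite-dimensional and supported in finitely many non-positive degrees, the same follows for $H_*(C')$. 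This closes the induction.

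No real obstacle is expected; the only delicate point worth verifying is the interaction between pushouts in dg $\Omega\C$-coalgebras and cofiber sequences of their underlying complexes, which is handled by the left Quillen property of $U$ inherited from the left-transferred model structure.
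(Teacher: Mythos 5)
Your proof is correct, but it takes a genuinely different route from the paper's. The paper first strictifies: it shows (by running the complete bar--cobar adjunction $\widehat{\Omega}_\iota \dashv \widehat{\mathrm{B}}_\iota$ through the same argument as in Proposition \ref{Prop: Artinian = Strictly Artinian}) that every coArtinian $\Omega\C$-coalgebra is quasi-isomorphic to a \emph{strictly} coArtinian one, built from genuine square-zero-type cofiber sequences with cofibers $S^n$, and then reads off the homology statement from that strict model. You instead argue directly on the defining closure property: the full subcategory of coalgebras whose homology is degree-wise finite dimensional and concentrated in finitely many non-positive degrees contains the generators $S^m$, $m \leq 0$, and is closed under the pushouts $C \amalg^h_{S^n} 0$, $n \leq -1$, because the forgetful functor to chain complexes is a left adjoint preserving cofibrations and weak equivalences in the left-transferred model structure, hence preserves homotopy pushouts and yields the long exact sequence you write down; so this subcategory contains the smallest one, i.e.\ all coArtinian coalgebras. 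Your argument is more elementary and self-contained --- it needs neither the bar--cobar Quillen equivalence nor the notion of a strictly coArtinian coalgebra --- while the paper's detour buys the stronger structural statement (existence of strict models), which is the exact dual of Proposition \ref{Prop: Artinian = Strictly Artinian} and of independent use. The one point you rightly flag, that $U$ preserves homotopy pushouts, is indeed the only thing to check, and it holds for exactly the reason you give.
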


\begin{proof}
Any coArtinian $\Omega\C$-coalgebra is quasi-isomorphic to a \textit{strictly coArtinian} $\Omega\C$-coalgebra, that is, to a $\Omega\C$-coalgebra $C$ whose initial morphism $0 \longrightarrow C$ can be factored as 
\[
0 = C^{(n)} \longrightarrow \cdots \longrightarrow C^{(i)} \longrightarrow C^{(i-1)} \longrightarrow \cdots \longrightarrow C^{(0)} = C~,
\]

where each morphism $C^{(i)} \longrightarrow C^{(i-1)}$ fits into a homotopy pushout of the form 
\[
\begin{tikzcd}
C^{(i)} \arrow[r] \arrow[d] \arrow[dr, phantom, "\ulcorner", very near end]
&C^{(i-1)} \arrow[d] \\
0\arrow[r]
&S^n ~,
\end{tikzcd}
\]

for some $n \leq 0$. This can be shown using the same arguments as in Proposition \ref{Prop: Artinian = Strictly Artinian}, where one considers this time the complete bar-cobar adjunction relative to $\iota$
\[
\begin{tikzcd}[column sep=5pc,row sep=3pc]
          \mathsf{dg}~\Omega \C\text{-}\mathsf{coalg} \arrow[r, shift left=1.1ex, "\widehat{\Omega}_{\iota}"{name=F}] & \mathsf{dg}~\C\text{-}\mathsf{alg}, \arrow[l, shift left=.75ex, "\widehat{\mathrm{B}}_{\iota}"{name=U}]
            \arrow[phantom, from=F, to=U, , "\dashv" rotate=-90]
\end{tikzcd}
\]

which is again a Quillen equivalence when one considers the transferred structure along this adjunction on dg $\C$-algebras. Finally, it is straightforward to see that any strictly coArtinian $\Omega\C$-coalgebra satisfies the desired property. 
\end{proof}

\begin{theorem}\label{thm: coalgebraic formal moduli problems}
There is a canonical equivalence of $\infty$-categories 
\[
\mathsf{FMP}_{\Omega\C} \simeq \mathsf{FMP}_{\Omega\C}^c
\]

between the $\infty$-category of formal moduli problems over $\Omega\C$-algebras and the $\infty$-category of formal moduli problems over $\Omega\C$-coalgebras.
\end{theorem}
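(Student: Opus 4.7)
The plan is to establish a contravariant equivalence of $\infty$-categories between Artinian $\Omega\C$-algebras and coArtinian $\Omega\C$-coalgebras, and then transfer the definition of formal moduli problems along this equivalence.

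First, I would invoke the linear duality Quillen adjunction between dg $\Omega\C$-algebras and dg $\Omega\C$-coalgebras coming from \cite{absolutealgebras}, as discussed in the introduction. By the main results of that work, this Quillen adjunction descends to an adjunction between the underlying $\infty$-categories which restricts to a (contravariant) equivalence between the full sub-$\infty$-categories of objects with bounded, degree-wise finite dimensional homology. Corollary \ref{Prop: Artinians have finite dimensional homology} and Lemma \ref{Prop: coArtinian have finite dimensional homology} ensure that both Artinian $\Omega\C$-algebras and coArtinian $\Omega\C$-coalgebras lie inside these finite dimensional sub-$\infty$-categories, so linear duality is well-defined and fully faithful when restricted to them.

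Second, I would verify that this contravariant equivalence identifies Artinian algebras with coArtinian coalgebras. On the level of generators, the linear dual of the algebra $S^n$ (concentrated in degree $n \geq 0$) is the coalgebra $S^{-n}$ (concentrated in degree $-n \leq 0$), which matches the two generating families up to the sign change $n \mapsto -n$. Since a contravariant equivalence exchanges homotopy pullbacks with homotopy pushouts, the closure condition defining Artinian algebras—stability under pullbacks $A \times_{S^n} 0$ for $n \geq 1$—translates exactly into the closure condition defining coArtinian coalgebras—stability under pushouts $C \amalg_{S^{-n}} 0$ for $-n \leq -1$. A straightforward induction on the generating procedure on each side then yields a contravariant equivalence of $\infty$-categories
\[
\mathsf{Art}~\Omega\C\text{-}\mathsf{alg}^{\op} \simeq \mathsf{coArt}~\Omega\C\text{-}\mathsf{coalg}~.
\]

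Third, I would apply the $\infty$-functor $\mathsf{Fun}(-,\mathcal{S})$ to the equivalence of Step 2. This directly transports the conditions defining $\mathsf{FMP}_{\Omega\C}$ to those defining $\mathsf{FMP}_{\Omega\C}^c$: the unit condition $F(0) \simeq \{*\}$ is preserved since the zero algebra (terminal on the algebra side) corresponds to the zero coalgebra (initial on the coalgebra side) under linear duality, and the preservation of the generating pullbacks along $0 \longrightarrow S^n$ on the algebra side corresponds exactly to sending the generating pushouts along $S^{-n} \longrightarrow 0$ to pullbacks in $\mathcal{S}$ on the coalgebra side. This gives the desired equivalence $\mathsf{FMP}_{\Omega\C} \simeq \mathsf{FMP}_{\Omega\C}^c$.

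The main obstacle will be Step 2, namely verifying that linear duality really identifies Artinian algebras with coArtinian coalgebras rather than producing a larger or smaller sub-$\infty$-category. While the correspondence on generators and the bijection of finite-dimensional objects are essentially given by \cite{absolutealgebras}, checking that the iterative construction of Artinian algebras via pullbacks corresponds term-by-term to the iterative construction of coArtinian coalgebras via pushouts requires confirming that linear duality exchanges the relevant homotopy (co)limits in a controlled way; this is the only genuinely non-formal ingredient in the argument.
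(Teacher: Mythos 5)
Your proposal follows essentially the same route as the paper's own proof: invoke the derived linear/Sweedler duality adjunction (the paper cites \cite[Section 8]{premierpapier} for the $\Omega\C$-(co)algebra version rather than \cite{absolutealgebras}), restrict it to an equivalence between Artinian $\Omega\C$-algebras and coArtinian $\Omega\C$-coalgebras using Corollary \ref{Prop: Artinians have finite dimensional homology} and Lemma \ref{Prop: coArtinian have finite dimensional homology}, and then identify the defining conditions on the two functor categories. The identification of generators $S^n \leftrightarrow S^{-n}$ and the exchange of the pullback/pushout closure conditions is exactly the ``straightforward'' verification the paper leaves implicit, so the argument is correct and matches.
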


\begin{proof}
We showed in \cite[Section 8]{premierpapier} that the derived Sweedler dual-linear dual adjunction induces an equivalence of $\infty$-categories
\[
\begin{tikzcd}[column sep=5pc,row sep=2.5pc]
\left(\mathsf{dg}~\Omega\C \text{-}\mathsf{alg}^{\mathbf{f.d}}_{-}~[\mathrm{Q.iso}^{-1}]\right)^{\mathsf{op}} \arrow[r, shift left=1.5ex, "\mathbb{R}(-)^\circ"{name=A}]
&\mathsf{dg}~\Omega\C\text{-}\mathsf{coalg}^{\mathbf{f.d}}_{+}~[\mathrm{Q.iso}^{-1}]~. \arrow[l, shift left=.75ex, "(-)^*"{name=C}] \arrow[phantom, from=A, to=C, , "\dashv" rotate=-90]
\end{tikzcd}
\]

between the $\infty$-category of dg $\Omega\C$-algebras with bounded-above degree-wise finite dimensional homology and the $\infty$-category of $\Omega\C$-coalgebras with bounded-below degree-wise finite dimensional homology.

\medskip

It is straightforward to show that the above equivalence restricts to an equivalence of $\infty$-categories between Artinian $\Omega\C$-algebras and coArtinian $\Omega\C$-coalgebras, using Corollary \ref{Prop: Artinians have finite dimensional homology} and Lemma \ref{Prop: coArtinian have finite dimensional homology}. The equivalence between the two types of formal moduli problems follows by identifying the conditions imposed on these functor categories via the aforementioned equivalence.
\end{proof}

\subsection{Description in terms of cellular algebras over a cooperad} We identify via Koszul duality coArtinian $\Omega\C$-coalgebras with cellular $\C$-algebras, giving another description of formal moduli problems over $\Omega\C$-algebras. 

\medskip

Recall that the twisting morphism $\iota: \C \longrightarrow \Omega \C$ induces a complete bar-cobar adjunction
\[
\begin{tikzcd}[column sep=7pc,row sep=3pc]
\mathsf{dg}~\Omega\C\text{-}\mathsf{coalg} \arrow[r, shift left=1.1ex, "\widehat{\Omega}_\iota"{name=F}] 
&\mathsf{dg}~\C\text{-}\mathsf{alg}^{\mathsf{qp}\text{-}\mathsf{comp}}  ~, \arrow[l, shift left=.75ex, "\widehat{\mathrm{B}}_\iota"{name=U}] \arrow[phantom, from=F, to=U, , "\dashv" rotate=-90]
\end{tikzcd}
\]

which is a Quillen equivalence when one considers the transferred model category structure along this adjunction on the category of qp-complete dg $\C$-algebras. Over a positive characteristic field, we need to consider qp-complete algebras, which is a finer notion of completeness, in order to promote the complete bar-cobar adjunction into an equivalence. See \cite[Section 3.6]{premierpapier} for more details. Let us make this transferred model category more explicit:

\begin{enumerate}
\item weak-equivalences are given by morphisms $f$ such that $\widehat{\mathrm{B}}_\iota(f)$ is a quasi-isomorphisms,
\item fibrations are given by degree-wise epimorphisms,
\item cofibrations are determined by the left-lifting property.
\end{enumerate}

The $\infty$-category of $\C$-algebras that we consider is given by localizing this model category at the aforementioned class of weak-equivalences, which is a sub-class of quasi-isomorphisms.

\begin{proposition}\label{prop: W.eq are quasi-isos}
The class of weak-equivalences of dg $\C$-algebras is strictly included in the class of quasi-isomorphisms of dg $\C$-algebras.
\end{proposition}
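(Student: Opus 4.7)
The plan is to prove the statement in two parts: the containment, and its strictness.

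\textbf{For the containment}, given a weak-equivalence $f \colon A \to A'$, by definition $\widehat{\mathrm B}_\iota(f)$ is a quasi-isomorphism of dg $\Omega\C$-coalgebras and hence of underlying dg modules (the model structure on dg $\Omega\C$-coalgebras being left-transferred from dg modules). To deduce that $f$ itself is a quasi-isomorphism of underlying complexes, I would exploit the fact that $A$ sits naturally as a subcomplex of $\widehat{\mathrm B}_\iota(A)$: since $\C$ is quasi-planar conilpotent and without loss of generality $0$-reduced (so $\C(0)=0$ and $\C(1) \cong \kk$), the arity-one component of the underlying dg module of $\widehat{\mathrm B}_\iota(A)$ is canonically identified with $A$, and it forms an honest subcomplex because the twisting differential strictly lowers arity and would land in arity zero. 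This extends to the arity filtration $F_p \subseteq \widehat{\mathrm B}_\iota(A)$ by components of arity at most $p$, with $F_1 = A$, whose associated gradeds $F_p / F_{p-1} \cong (\C(p) \otimes A^{\otimes p})^{\mathbb S_p}$ carry only the purely internal differential.

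\textbf{The main technical obstacle} is that this filtration is not exhaustive: $\widehat{\mathrm B}_\iota(A) = \prod_n (\C(n) \otimes A^{\otimes n})^{\mathbb S_n}$ is a product over arities rather than a direct sum, and $\bigcup_p F_p$ recovers only the non-completed bar $\mathrm B_\iota(A)$. To circumvent this, I would work with the inverse tower of truncated quotients $\widehat{\mathrm B}_\iota(A)/F_p$, identify $\widehat{\mathrm B}_\iota(A)$ with its inverse limit as a dg module, and exploit the automatic Mittag--Leffler property (the transition maps are surjections) via a Milnor short exact sequence. An induction on $p$, using naturality in $A$ and a five-lemma argument on the associated short exact sequences, then transfers the quasi-isomorphism on $\widehat{\mathrm B}_\iota(f)$ down to $F_1 = A$, yielding that $f$ is a quasi-isomorphism.

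\textbf{For the strictness of the inclusion}, I would produce an explicit quasi-isomorphism of dg $\C$-algebras which fails to be a weak-equivalence. The source of such counter-examples is that the complete bar construction, being built from an infinite product over arities together with a twisting differential, is strictly more sensitive than a quasi-isomorphism of underlying complexes: the combination of the product structure and the twist can detect homological information invisible to the underlying dg module. Concretely, a morphism between suitably chosen non-qp-complete dg $\C$-algebras which is a quasi-isomorphism of underlying complexes can nonetheless fail to induce a quasi-isomorphism on complete bars, reflecting non-trivial $\varprojlim^1$-contributions arising from the interaction of the completion with the algebra structure; such a morphism is the desired witness of strictness.
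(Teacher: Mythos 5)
The paper gives no internal argument here (it defers entirely to \cite[Section 7]{premierpapier}), so your attempt has to stand on its own, and it does not: the containment argument has a fatal directional error. From the short exact sequences $0 \to F_{p-1} \to F_p \to F_p/F_{p-1} \to 0$, the five lemma propagates ``induces an isomorphism in homology'' from two of the three terms to the third; to descend from $F_p$ to $F_{p-1}$ you would therefore need to already know that $f$ induces a quasi-isomorphism on $F_p/F_{p-1} \cong \left(\C(p)\otimes A^{\otimes p}\right)^{\mathbb S_p}$. Since $\C(p)$ is $\mathbb S_p$-projective and we work over a field, that is equivalent (via K\"unneth) to $f$ itself being a quasi-isomorphism --- precisely the conclusion you are after. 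The induction is circular, and no Mittag--Leffler or Milnor-sequence bookkeeping repairs it: a quasi-isomorphism of filtered complexes simply does not restrict to one on a filtration stage (consider $0 \to D^n$ with the two-step filtration $S^{n-1}\subset D^n$). Filtration arguments only run the other way: quasi-isomorphism on associated graded pieces plus convergence implies quasi-isomorphism on the total object. The route actually available is the opposite one: show that the counit $\widehat\Omega_\iota\widehat{\mathrm B}_\iota A \to A$ is a natural quasi-isomorphism on qp-complete algebras and that $\widehat\Omega_\iota$ preserves quasi-isomorphisms between the relevant objects, then conclude by two-out-of-three; both inputs are exactly the non-formal content of \cite{premierpapier}, and the second is where quasi-planarity and qp-completeness genuinely enter, since the completed cobar is an infinite product. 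A secondary concern: you take the underlying complex of $\widehat{\mathrm B}_\iota(A)$ to be $\prod_n\left(\C(n)\otimes A^{\otimes n}\right)^{\mathbb S_n}$, but $\widehat{\mathrm B}_\iota$ is the right adjoint of $\widehat\Omega_\iota$ with values in \emph{all}, not necessarily conilpotent, $\Omega\C$-coalgebras, and such right adjoints are not computed by a completed cofree formula of this shape; so even the object you filter is not obviously the correct one.

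The strictness part is not a proof at all: you describe where a counterexample ought to come from, but the sentence ``a morphism between suitably chosen non-qp-complete dg $\C$-algebras \dots can nonetheless fail to induce a quasi-isomorphism on complete bars'' merely restates what must be exhibited. An explicit witness is required --- for instance an acyclic dg $\C$-algebra that is not weakly trivial, in the spirit of the classical examples showing that quasi-isomorphisms are strictly coarser than bar-detected equivalences in the Hinich setting --- and none is produced.
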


\begin{proof}
Follows from \cite[Section 7]{premierpapier}.
\end{proof}

\begin{definition}[Cellular $\C$-algebras]
The $\infty$-category of \textit{cellular} $\C$-algebras is the smallest full sub-category of the $\infty$-category of $\C$-algebras 

\medskip

\begin{enumerate}
\item it contains $(S^n)^{\C}$ for any $n \leq 0$,

\medskip

\item for any cellular $\C$-algebra $\mathfrak{g}$ and any morphism $(S^n)^{\C} \longrightarrow \mathfrak{g}$, where $n \leq -1$, the pushout $\mathfrak{g} \amalg_{(S^n)^{\C}} 0^\C$ is also cellular.
\end{enumerate}
\end{definition}

\begin{remark}
    When $\C$ is $0$-reduced, meaning $\C(0) = 0$, the initial $\C$-algebra $0^\C$ is simply $0$.
\end{remark}

\begin{remark}
Following the terminology of \cite{presentationalgebras}, these objects should be referred to as $0$-cellular objects.
\end{remark}

\begin{definition}[Cocartesian presheaves]
A \textit{cocartesian presheaf} $F$ on the $\infty$-category of cellular $\C$-algebras amounts to the data of a functor
    \[
    F: \mathsf{Cell}~\C\text{-}\mathsf{alg}^{\mathsf{op}} \longrightarrow \mathcal{S},
    \]
    satisfying the following conditions:

\begin{enumerate}
\item $F(0^\C) \simeq \{*\}$, where $0^\C$ is the initial $\C$-algebra.

\medskip

\item $F$ sends pushouts diagrams of the form
\[
\begin{tikzcd}
(S^n)^{\C} \arrow[r] \arrow[d] \arrow[dr, phantom, "\ulcorner", very near end]
&0^\C \arrow[d] \\
\mathfrak{g} \arrow[r]
&\mathfrak{g}'~,
\end{tikzcd}
\]

to pullbacks in spaces, where $n \leq -1$.
\end{enumerate}

\medskip

We denote $\Psh^\lrcorner (\mathsf{Cell}~\C\text{-}\mathsf{alg})$ the full reflective sub $\infty$-category of presheaves on cellular $\C$-algebras.
\end{definition}

\begin{proposition}\label{prop: FMP simeq Functors from cellular C-algebras}
The $\infty$-category of formal moduli problem over $\Omega\C$-algebras is canonically equivalent to the $\infty$-category of cocartesian presheaves on cellular $\C$-algebras.
\end{proposition}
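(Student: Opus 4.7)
The plan is to combine Theorem~\ref{thm: coalgebraic formal moduli problems} with the complete bar--cobar Quillen equivalence, restricted to the appropriate sub-$\infty$-categories generated by spheres. By Theorem~\ref{thm: coalgebraic formal moduli problems}, I may replace $\mathsf{FMP}_{\Omega\C}$ by $\mathsf{FMP}_{\Omega\C}^c$, so it suffices to exhibit a canonical equivalence
\[
\mathsf{FMP}_{\Omega\C}^c \simeq \Psh^{\lrcorner}(\mathsf{Cell}~\C\text{-}\mathsf{alg})~.
\]
The strategy is to restrict the $\infty$-categorical equivalence induced by the complete bar--cobar Quillen equivalence $\widehat{\Omega}_\iota \dashv \widehat{\mathrm{B}}_\iota$ to the full sub-$\infty$-categories of coArtinian $\Omega\C$-coalgebras and cellular $\C$-algebras, and then transfer the equivalence to the associated presheaf $\infty$-categories.

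The key computational input is the identification
\[
\mathbb{L}\widehat{\Omega}_\iota(S^n) \simeq (S^n)^{\C}
\]
for the trivial sphere $\Omega\C$-coalgebra $S^n$ with $n \leq 0$: because the decomposition map on $S^n$ vanishes, the twisted differential on the cobar construction reduces to the internal differential alone, so the output is precisely the free $\C$-algebra on $S^n$. Likewise, $\mathbb{L}\widehat{\Omega}_\iota(0) \simeq 0^{\C}$. Since $\mathbb{L}\widehat{\Omega}_\iota$ is a left adjoint, it preserves homotopy pushouts. A transfinite induction on the cell filtration then shows that $\mathbb{L}\widehat{\Omega}_\iota$ maps coArtinian $\Omega\C$-coalgebras into cellular $\C$-algebras and is essentially surjective; fully faithfulness on this restriction is inherited from the global bar--cobar equivalence of $\infty$-categories. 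This yields the desired equivalence
\[
\mathbb{L}\widehat{\Omega}_\iota \colon \mathsf{coArt}~\Omega\C\text{-}\mathsf{coalg} \xrightarrow{\;\simeq\;} \mathsf{Cell}~\C\text{-}\mathsf{alg}~.
\]

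Finally, this equivalence of sub-$\infty$-categories induces, by composition, an equivalence of the corresponding presheaf $\infty$-categories. Under it, the sheaf-like conditions in the definitions of $\mathsf{FMP}_{\Omega\C}^c$ and $\Psh^{\lrcorner}(\mathsf{Cell}~\C\text{-}\mathsf{alg})$ correspond term-by-term: the defining pushouts $S^n \to 0$ on the coalgebra side correspond to those of the form $(S^n)^{\C} \to 0^{\C}$, and the condition $F(0) \simeq *$ matches the one on $0^{\C}$. The main obstacle is the concrete identification $\widehat{\Omega}_\iota(S^n) \simeq (S^n)^{\C}$, which requires unfolding the cobar construction on a trivial coalgebra; this is essentially the only non-formal step, and everything else follows by standard manipulations with Quillen equivalences and the inductive generation of cellular and coArtinian objects.
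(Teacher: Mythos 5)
Your proposal is correct and follows essentially the same route as the paper: reduce via Theorem \ref{thm: coalgebraic formal moduli problems} to the coalgebraic side, then use that the complete bar--cobar adjunction relative to $\iota$ is a Quillen equivalence together with the identification $\widehat{\Omega}_\iota(S^n) \simeq (S^n)^{\C}$. The paper's proof is just a terser version of the same argument; your spelled-out induction on cells and the matching of the cocartesian conditions are exactly the implicit content of ``follows directly''.
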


\begin{proof}
Follows directly from Theorem \ref{thm: coalgebraic formal moduli problems}, using the fact that the complete bar-cobar adjunction with respect to $\iota$ is a Quillen equivalence, simply by noticing that the image of $S^n$ under the complete cobar functor is given by $(S^n)^{\C}$.
\end{proof}

\begin{remark}
One can not apply the methods of \cite{presentationalgebras} in order to conclude that formal moduli problems over $\Omega \C$-algebras are always encoded by qp-complete dg $\C$-algebras. The obstruction is that the monad 
\[
(-)^{\C} \coloneqq \displaystyle \prod_{n \geq 0} \mathrm{Hom}_{\mathbb{S}_n}(\C(n), (-)^{\otimes n})
\]

clearly does not preserve sifted homotopy colimits. Therefore one can not conclude that the category of qp-complete dg $\C$-algebras admits a presentation in terms of cellular $\C$-algebras. This fact is what forces us to consider instead the category of dg $\C^*$-algebras, which admits such a presentation.
\end{remark}

\section{Divided power algebras}
Our goal in this section is to define the $\infty$-category of $\C^*$-algebras, which, under certain hypothesis, will encode formal moduli problems over $\Omega\C$-algebras. Since we are working over a field $\kk$ of any characteristic, we first give a series of recollections on tame model structures and semi-model structures on $\mathbb{S}$-tame operads, which mostly come from \cite{pdalgebras}. 

\subsection{The tame model structure}
Let $R$ be a dg unital associative algebra. It this section we briefly recall the \textit{tame} or \textit{contraderived} model structure on the category of dg $R$-modules. Let us define quasi-free dg $R$-modules to be those whose underlying graded $R$-module is free and let us define graded-projective dg $R$-modules to be retracts of quasi-free dg $R$-modules. The tame model structure is given by the following classes of maps:

\begin{enumerate}
\item the class of fibrations is given by degree-wise epimorphisms,

\item the class of cofibrations is given by monomorphisms whose cokernel is a graded-projective dg $R$-module,

\item and the class of weak-equivalences is given by morphisms $f: A \longrightarrow B$ which induce quasi-isomorphisms 
\[
f_*: \mathrm{hom}(P,A) \longrightarrow \mathrm{hom}(P,B)~,
\]

for all graded-projective dg $R$-modules $P$, that is, for all dg $R$-modules such that $P_k$ is a projective $R$-module for all $k$ in $\mathbb{Z}$.
\end{enumerate}

See \cite{TwoKinds} where contraderived categories are introduced. The projective model structure can be obtained as a right Bousfield localization of the tame model structure. Both model structures actually agree on bounded below dg $R$-modules, meaning that any quasi-isomorphism of bounded below dg $R$-modules is a tame equivalence.

\begin{example}\label{Ex: qi = tame eq}
When $R$ is a field $\kk$, the tame model structure on dg $\kk$-modules coincides with the projective model structure.
\end{example}

\begin{remark}
When $R$ is a \textit{commutative} dga which \textit{homologically bounded} and \textit{coherent}, the $\infty$-category presented by this model structure can be identified with the $\infty$-category of \textit{pro-coherent} $R$-modules. See \cite[Section 8]{Liealgebroids}. 
\end{remark}

In particular, one can consider the tame model structure on dg $\kk[\mathbb{S}_n]$-modules for any $n \geq 0$. This in turn induces a tame model structure on the category of dg $\mathbb{S}$-modules, where weak-equivalences are tame equivalences arity-wise. 

\begin{definition}[$\mathbb{S}$-tame]
A dg $\mathbb{S}$-module $M$ is said to be $\mathbb{S}$-\textit{tame} if $M_n$ is a cofibrant object in the tame model structure of dg $\mathbb{S}_n$-modules for all $n$. In other words, for every natural integer $n$, $M(n)$ is a retract of a quasi-free right $\mathbb S_n$-modules.
\end{definition}

\begin{example}
A quasi-free dg $\mathbb{S}$-module, although it might not be $\mathbb{S}$-projective nor $\mathbb{S}$-injective, it is always $\mathbb{S}$-tame.
\end{example}

\begin{proposition}[{\cite[Proposition 4.31]{pdalgebras}}]\label{prop: tame model structures on P algebras}
Let $\mathcal{P}$ be a dg operad whose underlying dg $\mathbb{S}$-module is $\mathbb{S}$-tame. The category of dg $\mathcal{P}$-algebras admits a \textit{cofibrantly generated semi-model category structure} given by 

\begin{enumerate}
\item weak-equivalences are given by quasi-isomorphisms,
\item fibrations are given by degree-wise epimorphisms,
\item cofibrations are determined by the left-lifting property.
\end{enumerate}
\end{proposition}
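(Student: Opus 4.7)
The plan is to obtain the semi-model structure by transfer along the free-forgetful adjunction
\[
\begin{tikzcd}[column sep=5pc]
\mathsf{dg}~\mathsf{mod} \arrow[r, shift left=1ex, "\mathcal{P}(-)"] & \mathsf{dg}~\mathcal{P}\text{-}\mathsf{alg}, \arrow[l, shift left=1ex, "U"]
\end{tikzcd}
\]
starting from the projective model structure on dg $\kk$-modules. The candidate generating cofibrations are $\{\mathcal{P}(S^{n-1}) \hookrightarrow \mathcal{P}(D^n)\}_{n \in \mathbb{Z}}$ and the candidate generating trivial cofibrations are $\{0 = \mathcal{P}(0) \hookrightarrow \mathcal{P}(D^n)\}_{n \in \mathbb{Z}}$. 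The defining classes (degree-wise epimorphisms as fibrations, quasi-isomorphisms as weak equivalences) are clearly closed under the usual stability properties and accessibility conditions, so the small object argument produces the required factorizations; and a map between dg $\mathcal{P}$-algebras is a fibration (respectively a trivial fibration) if and only if it has the right lifting property against the proposed generators, as these pass to $U$ through adjunction.

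The content of the statement, and the step where $\mathbb{S}$-tameness enters, is verifying the semi-model axiom: pushouts of generating trivial cofibrations along maps from cofibrant objects must be quasi-isomorphisms. First I would reduce to analyzing a pushout
\[
\begin{tikzcd}
\mathcal{P}(V) \arrow[r] \arrow[d] \arrow[dr, phantom, "\ulcorner", very near end] & A \arrow[d] \\
\mathcal{P}(W) \arrow[r] & A \amalg_{\mathcal{P}(V)} \mathcal{P}(W)
\end{tikzcd}
\]
where $V \hookrightarrow W$ is a trivial cofibration between cofibrant dg modules and $A$ is cofibrant. Standard operadic calculus filters such a pushout by the "number of new generators inserted", and identifies the associated graded as a coproduct, over $n \geq 0$, of terms of the form $\mathcal{P}(A, \ldots, A, W/V, \ldots, W/V)_{\mathbb{S}_n}$, i.e. coinvariants of $\mathcal{P}(n+k)$ tensored with several copies of $A$ and $W/V$. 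For such an associated graded to compute the homotopy pushout it suffices that each of these coinvariants preserves quasi-isomorphisms in the $W/V$-slot; and this is exactly guaranteed by $\mathbb{S}$-tameness of $\mathcal{P}$, since being a retract of a quasi-free dg $\mathbb{S}_n$-module is precisely the cofibrancy condition for which $(-)\otimes_{\mathbb{S}_n}(-)$ is homotopically well-behaved on the corresponding factor.

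The hard part is precisely this filtration-and-spectral-sequence computation: controlling the associated graded in the tame (contraderived) setting, rather than under the more restrictive $\mathbb{S}$-projective or characteristic-zero hypotheses that appear elsewhere. Once this step is secured, the remaining axioms of a cofibrantly generated semi-model category in the sense of Fresse and Spitzweck follow by routine verification: the model structure is cofibrantly generated by construction; cofibrations with cofibrant source are retracts of relative cell complexes built from the generating cofibrations; and weak equivalences satisfy 2-out-of-3 because they are detected after applying $U$, where they coincide with quasi-isomorphisms in $\mathsf{dg}~\mathsf{mod}$. In short, the proof reduces to the pushout analysis made possible by $\mathbb{S}$-tameness, and then invokes the standard transfer machinery for semi-model structures, as carried out in detail in the cited reference \cite{pdalgebras}.
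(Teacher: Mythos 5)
The paper offers no proof of this proposition: it is imported verbatim from \cite[Proposition 4.31]{pdalgebras}, so there is no in-text argument to compare yours against. Your sketch follows the standard transfer strategy that the cited reference itself carries out --- generating (trivial) cofibrations obtained by applying the free functor to those of dg modules, the small object argument, and a filtration of the pushout $A \amalg_{\mathcal{P}(V)}\mathcal{P}(W)$ whose associated graded is built from enveloping-operad terms $\mathcal{P}_A(k)\otimes_{\mathbb{S}_k}(W/V)^{\otimes k}$ --- so the architecture is right, and you correctly isolate the pushout analysis as the step where $\mathbb{S}$-tameness is used. Two caveats. First, your phrase that $\mathbb{S}_n$-tameness is ``precisely the cofibrancy condition for which $(-)\otimes_{\mathbb{S}_n}(-)$ is homotopically well-behaved'' glosses over the key subtlety: a retract of a quasi-free dg $\mathbb{S}_k$-module is cofibrant for the \emph{tame} (contraderived) model structure, so $\mathcal{P}(k)\otimes_{\mathbb{S}_k}(-)$ preserves tame equivalences rather than arbitrary quasi-isomorphisms; over a general coherent base this forces the weak equivalences of the transferred structure to be tame equivalences, and the statement can be phrased with quasi-isomorphisms only because the base is a field, where the two notions coincide on dg modules (Example \ref{Ex: qi = tame eq}, and the discussion immediately following the proposition in the paper). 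That reduction is where the substance of \cite[Proposition 4.31]{pdalgebras} lies, and you explicitly defer it to the reference, which is consistent with how the paper itself treats the result. Second, a minor slip: the source of your generating trivial cofibrations is the initial algebra $\mathcal{P}(0)$ (the free algebra on the zero module), which equals $0$ only when $\mathcal{P}$ is $0$-reduced --- an assumption not made in this proposition.
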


\begin{remark}
The $\infty$-category presented by this model structure does not depend on the choice of the dg operad $\mathcal{P}$ in the following sense: any tame equivalence of dg operads induces a Quillen equivalence between the corresponding semi-model category structures.
\end{remark}

L. Brantner, R. Campos and J. Nuiten work in a more general setting where $\kk$ is any coherent ring, therefore the semi-model structure on dg $\mathcal{P}$-algebras is also given by tame equivalences. The same phenomenon as in Example \ref{Ex: qi = tame eq} reduces tame equivalences of dg $\mathcal{P}$-algebras to quasi-isomorphisms when the base category is that of dg modules over a field. We refer to \cite[Section 4]{pdalgebras} for more details. 

\subsection{The linear dual operad}
For the rest of this section, we fix a quasi-planar conilpotent dg cooperad $\C$.

\begin{lemma}
The dg $\mathbb{S}_n$-module $\C(n)^*$ is $\mathbb{S}_n$-injective module.
\end{lemma}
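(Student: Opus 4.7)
The plan is to show that the underlying graded right $\mathbb{S}_n$-module of $\C(n)$ is free, and then to deduce injectivity of its linear dual from the Frobenius property of $\kk[\mathbb{S}_n]$. For a pseudo-planar $\C$, the defining isomorphism $\varphi_\C : \C_\pl \otimes \mathbb{S} \cong \C$ of graded cooperads gives directly that $\C(n) \cong \C_\pl(n) \otimes \kk[\mathbb{S}_n]$ is a free right $\mathbb{S}_n$-module in each homological degree. For the general quasi-planar case, I would write $\C = \colim_i \C^{(i)}$ along arity-wise degree-wise injections of pseudo-planar pieces; since the description of the graded quotients $\gr_{i+1}\C \cong (\C^{(i+1)}_\pl / \C^{(i)}_\pl) \otimes \mathbb{S}$ forces the compatibility $\C^{(i)}_\pl \hookrightarrow \C^{(i+1)}_\pl$, taking colimits in graded vector spaces and using that tensoring with $\mathbb{S}$ commutes with filtered colimits yields $\C \cong \C_\pl \otimes \mathbb{S}$, where $\C_\pl := \colim_i \C^{(i)}_\pl$. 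So again $\C(n)$ is a free graded right $\mathbb{S}_n$-module.

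Now, $\kk[\mathbb{S}_n]$ is a finite-dimensional group algebra, hence a Frobenius algebra; consequently it is self-injective as a right module over itself, and there is an isomorphism $\kk[\mathbb{S}_n]^* \cong \kk[\mathbb{S}_n]$ of right $\mathbb{S}_n$-modules. Dualizing arity-wise then yields
\[
\C(n)^* \;\cong\; \mathrm{Hom}_\kk\bigl(\C_\pl(n), \kk[\mathbb{S}_n]\bigr),
\]
which, as a graded right $\mathbb{S}_n$-module, is a product in each homological degree of copies of the injective module $\kk[\mathbb{S}_n]$ indexed by a homogeneous basis of $\C_\pl(n)$. Since arbitrary products of injective modules are always injective, $\C(n)^*$ is an injective graded right $\mathbb{S}_n$-module, which is what had to be shown.

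There is no substantial obstacle here: the only nontrivial input is the Frobenius property of $\kk[\mathbb{S}_n]$, which supplies at once the self-duality and the self-injectivity; everything else is a formal consequence of the combinatorial structure built into the definition of a quasi-planar cooperad, namely that the underlying graded $\mathbb{S}$-module factors as a tensor product with the regular representation.
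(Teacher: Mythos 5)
Your proof is correct and follows essentially the same route as the paper, which simply cites (from the companion paper) that $\C(n)$ is $\mathbb{S}_n$-projective because $\C$ is quasi-planar and that the linear dual of an $\mathbb{S}_n$-projective module is $\mathbb{S}_n$-injective; you have merely unpacked both citations, the first via the decomposition $\C \cong \C_\pl \otimes \mathbb{S}$ inherited by the colimit, and the second via the Frobenius property of $\kk[\mathbb{S}_n]$ (one could even bypass Frobenius, since $\mathrm{Hom}_\kk(\kk[\mathbb{S}_n],\kk)$ is injective over any finite-dimensional algebra by adjunction). No gaps.
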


\begin{proof}
The dg $\mathbb{S}_n$-module $\C(n)$ is projective since $\C$ is quasi-planar, see \cite[Section 2]{premierpapier}. The linear dual of a $\mathbb{S}_n$-projective module is a $\mathbb{S}_n$-injective module, see \cite[Section 1]{premierpapier} for more details.
\end{proof}

\begin{lemma}
There is a canonical dg operad structure on the dg $\mathbb{S}$-module $\{\C(n)^*\}$.
\end{lemma}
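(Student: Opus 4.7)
The plan is to obtain the operad structure on $\{\C(n)^*\}$ by dualizing the cooperad structure on $\C$, using only the lax monoidal structure of the linear dual functor $(-)^*: \dgMod^{\op} \longrightarrow \dgMod$.

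First, I would define the symmetric group actions. Each $\C(n)$ carries a right $\mathbb{S}_n$-action, and linear duality turns this into a right $\mathbb{S}_n$-action on $\C(n)^*$ (using the involution $\sigma \mapsto \sigma^{-1}$ in the usual way), giving a dg $\mathbb{S}$-module structure on $\{\C(n)^*\}$. Next, I would define partial compositions. For every $n \geq 1$, $1 \leq i \leq n$, and $k \geq 0$, the cooperad $\C$ comes equipped with a partial decomposition
\[
\Delta_i: \C(n-k+1) \otimes \C(k) \longleftarrow \C(n).
\]
Dualizing and precomposing with the canonical lax monoidal map $\C(n-k+1)^* \otimes \C(k)^* \longrightarrow (\C(n-k+1) \otimes \C(k))^*$ produces
\[
\circ_i: \C(n-k+1)^* \otimes \C(k)^* \longrightarrow (\C(n-k+1) \otimes \C(k))^* \xrightarrow{\Delta_i^*} \C(n)^*~.
\]
The counit of $\C$ dualizes to a unit $\kk \longrightarrow \C(1)^*$.

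The key step is then to verify the operad axioms. The sequential/parallel associativity axioms for the partial compositions $\circ_i$ follow by direct dualization from the corresponding coassociativity identities satisfied by the partial decompositions $\Delta_i$ of a cooperad, together with the naturality of the lax monoidal comparison morphism $(-)^* \otimes (-)^* \Rightarrow (- \otimes -)^*$ with respect to symmetric monoidal coherences and symmetric group actions. The same argument applied to the counit gives the unit axioms. Finally, the differential on $\C(n)^*$ is induced from the (co)differential on $\C(n)$ via $\partial(f) = -(-1)^{|f|} f \circ d_\C$; a standard sign computation shows that it is a derivation with respect to the partial compositions $\circ_i$ exactly because $d_\C$ is a coderivation with respect to the $\Delta_i$.

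The main obstacle is a bookkeeping one rather than a conceptual one: the lax monoidal comparison $V^* \otimes W^* \to (V \otimes W)^*$ is not an isomorphism in general, so one must check that the operad structure maps $\circ_i$ are well-defined as maps with source $\C(n-k+1)^* \otimes \C(k)^*$ (and not merely as maps out of the larger space $(\C(n-k+1) \otimes \C(k))^*$). This is automatic from the above composite, and no finite-dimensionality assumption on the arities of $\C$ is required; the only role of quasi-planarity (already used in the previous lemma to see $\C(n)^*$ is $\mathbb{S}_n$-injective) is to ensure that $\C^*$ is a well-behaved object from the viewpoint of the homotopy theory that will be developed in the sequel.
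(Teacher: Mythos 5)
Your argument is correct and rests on the same underlying observation as the paper's: the structure on $\C$ that dualizes without loss is the \emph{partial} decomposition structure, since the maps $\Delta_i$ involve only finite tensor products, so precomposing their duals with the lax monoidal comparison $\C(n-k+1)^* \otimes \C(k)^* \to (\C(n-k+1)\otimes\C(k))^*$ yields honest partial compositions on $\C^*$, with no finite-dimensionality hypothesis. The difference is one of packaging. You extract the $\Delta_i$ directly and propose to check the operad axioms (equivariance, associativity, unitality, Leibniz rule) by hand; the paper instead first uses the norm map from coinvariants to invariants, which makes the identity an oplax monoidal functor from $(\mathbb{S}\text{-}\mathsf{mod},\circ)$ to $(\mathbb{S}\text{-}\mathsf{mod},\tilde{\circ})$, to upgrade $\C$ to a comonoid for $\tilde{\circ}$, i.e.\ a coalgebra over the coloured operad encoding operads; the linear dual of such a coalgebra is then automatically an algebra over that coloured operad, so all coherences come for free. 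The one step you take for granted --- that a cooperad in this paper's sense, a comonoid for the coinvariant composition product $\circ$, has partial decompositions landing in genuine tensor products $\C(n-k+1)\otimes\C(k)$ rather than in coinvariants --- is exactly what the norm map supplies; it is harmless here because quasi-planarity makes the symmetric group actions on $\C$ degree-wise free, so the norm map is even an isomorphism, but it is worth naming since it is where the only real content of the lemma sits. Your by-hand route buys explicitness; the paper's route buys the avoidance of the sign and coherence bookkeeping you defer to ``a standard computation.''
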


\begin{proof}
The norm map from coinvariants to invariants of symmetric groups yields a morphism 
\[
\mathrm{norm}: M \comp N \longrightarrow M~ \tilde{\comp}~ N
\]
of $\mathbb{S}$-modules. It makes the the identity of symmetric sequences a oplax monoidal functor; see \cite{FresseDP}. Thus $\C$ is in particular a comonoid with respect to the composition product $\tilde{\circ}$. This structure endows $\C$ with a unital partial cooperad structure, that is, a coalgebra over the coloured operad that encodes operads, see \cite[Section 1.2]{lucio2022curved}. Hence, its linear dual is endowed with an operad structure.
\end{proof}

\begin{remark}
The norm map 
\[
\mathrm{norm}: \C \comp \C \cong \C~ \tilde{\comp}~ \C
\]
is in fact a isomorphism in this case, as the action of the symmetric groups on the $\mathbb{S}$-module $\C$ is degree-wise free. 
\end{remark}

\subsection{Divided power algebras} The dg operad $\C^*$ induces a $1$-categorical monad on the category of dg module via its Schur functor: 
\[
\displaystyle \bigoplus_{n \geq 0} \C^*(n) \otimes_{\mathbb{S}_n} (-)^{\otimes n}: \mathsf{dg}~\mathsf{mod} \longrightarrow \mathsf{dg}~\mathsf{mod}~.
\]

\begin{lemma}
The canonical natural transformation 
\[
\displaystyle \C^* \circ (-) = \bigoplus_{n \geq 0} \C^*(n) \otimes_{\mathbb{S}_n} (-)^{\otimes n} \cong \bigoplus_{n \geq 0} \left(\C^*(n) \otimes (-)^{\otimes n}\right)^{\mathbb{S}_n} = \C^* ~\tilde{\circ}~(-)
\]
induced by the norm map with respect to the actions of the symmetric groups is an isomorphism.
\end{lemma}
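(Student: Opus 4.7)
The statement is that for each $n \geq 0$ and each dg module $V$, the canonical norm map
\[
\C^*(n) \otimes_{\mathbb{S}_n} V^{\otimes n} \longrightarrow \left( \C^*(n) \otimes V^{\otimes n} \right)^{\mathbb{S}_n}
\]
is an isomorphism. The plan is to mirror, in the dual setting, the observation made in the preceding remark that $\C \circ \C \cong \C\, \tilde{\circ}\, \C$ holds because the action of the symmetric groups on $\C$ is degree-wise free.

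The starting point is the classical fact that for any free right $\kk[\mathbb{S}_n]$-module $F$ and any dg module $W$ with $\mathbb{S}_n$-action, the norm map $(F \otimes W)_{\mathbb{S}_n} \to (F \otimes W)^{\mathbb{S}_n}$ (for the diagonal action) is an isomorphism. Since both invariants and coinvariants under a finite group commute with direct sums, this reduces to the case $F = \kk[\mathbb{S}_n]$, which is handled by the twisting isomorphism $g \otimes w \mapsto g \otimes g^{-1}w$: it exhibits $\kk[\mathbb{S}_n] \otimes W$ as a free $\mathbb{S}_n$-module on the underlying $\kk$-module of $W$ with the trivial action, on which the norm iso is immediate.

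Next, by the quasi-planar hypothesis, one has an isomorphism $\C(n) \cong F_n \otimes \kk[\mathbb{S}_n]$ of graded right $\mathbb{S}_n$-modules, where $F_n$ is a graded $\kk$-module without $\mathbb{S}_n$-action, constructed as the transfinite colimit of the planar pieces $\C^{(i)}_\pl(n)$. Taking the linear dual and invoking the Frobenius isomorphism $\kk[\mathbb{S}_n]^* \cong \kk[\mathbb{S}_n]$ of $\mathbb{S}_n$-bimodules, one then obtains a concrete description of $\C^*(n)$ as a (possibly infinite) product of copies of $\kk[\mathbb{S}_n]$. Since $\mathbb{S}_n$ is finite and acts componentwise on such a product, a direct inspection shows that both invariants and coinvariants commute with this product. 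The norm iso can thus be checked factor by factor, reducing to the first step and concluding the proof.

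The main expected obstacle is precisely the passage from direct sums to direct products under linear duality when $F_n$ is infinite-dimensional: one must carefully verify that the submodule of coinvariant relations inside the product decomposes componentwise. This relies crucially on $\mathbb{S}_n$ being a finite group, so that the sum defining the coinvariant relations is itself finite and therefore commutes with products.
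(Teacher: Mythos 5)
Your overall strategy coincides with the paper's: the printed proof is a single line, ``the dg $\mathbb{S}_n$-module $\C^*(n)$ is quasi-free, thus the norm map is an isomorphism,'' and your first two steps (the untwisting isomorphism for $\kk[\mathbb{S}_n]\otimes W$ with the diagonal action, and the identification $\C(n) \cong \C_\pl(n) \otimes \kk[\mathbb{S}_n]$ coming from quasi-planarity together with the Frobenius self-duality of $\kk[\mathbb{S}_n]$) are exactly what that line is compressing.

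However, your final reduction does not work as written. You describe $\C^*(n)$ as a possibly infinite product of copies of $\kk[\mathbb{S}_n]$ and propose to check the norm isomorphism ``factor by factor,'' after observing that invariants and coinvariants commute with this product. That observation is fine, and the worry you flag about the coinvariant relations decomposing componentwise is resolved exactly as you say, by finiteness of $\mathbb{S}_n$. The step that actually fails is earlier: the object whose (co)invariants you must compute is $\C^*(n) \otimes V^{\otimes n}$, and the functor $- \otimes V^{\otimes n}$ does not commute with infinite products, so $\bigl(\prod_I \kk[\mathbb{S}_n]\bigr) \otimes V^{\otimes n}$ need not agree with $\prod_I \bigl(\kk[\mathbb{S}_n] \otimes V^{\otimes n}\bigr)$ when $I$ is infinite and $V$ is infinite-dimensional; since the lemma concerns the monad on all dg modules, $V$ is arbitrary, so this case cannot be excluded. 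The repair is immediate and stays inside your own first step: because $\kk[\mathbb{S}_n]$ is finite-dimensional, dualizing $\C(n) \cong \C_\pl(n) \otimes \kk[\mathbb{S}_n]$ gives $\C^*(n) \cong \C_\pl(n)^* \otimes \kk[\mathbb{S}_n]$ with the $\mathbb{S}_n$-action concentrated on the group-algebra factor, i.e.\ $\C^*(n)$ is itself a free graded $\kk[\mathbb{S}_n]$-module. Your first step then applies directly with $F = \C^*(n)$ and $W = V^{\otimes n}$, with no passage through the product decomposition; this is precisely the paper's ``quasi-free'' assertion.
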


\begin{proof}
The dg $\mathbb{S}_n$-module $\C^*(n)$ is quasi-free, thus the norm map is an isomorphism.
\end{proof}

The linear dual $\C^*$ has both the structure of a dg operad and of a \textit{divided power} dg operad, that is, a monoid for the composition product $\tilde{\circ}$. Hence one can consider divided power $\C^*$-algebras, that is, chain  complexes equipped with the structure of an algebra over the monad $\C^* ~\tilde{\comp}~(-)$. However, since the action of the symmetric groups on $\C^*$ is degree-wise free, the categories of dg $\C^*$-algebras and of divided power $\C^*$-algebras are isomorphic, so no divided power operations can appear at the $1$-categorical level.

\medskip

At the $\infty$-categorical level, this equivalence between usual $\C^*$-algebras and divided power $\C^*$-algebras no more holds. Since $\C^*(n)$ is \textit{not} $\mathbb{S}_n$-projective, we \textit{don't} have a natural quasi-isomorphism of dg modules 
\[
\displaystyle \bigoplus_{n \geq 0} \C^*(n) \otimes_{\mathbb{S}_n} (-)^{\otimes n} \not\simeq \bigoplus_{n \geq 0} \left(\C^*(n) \otimes (-)^{\otimes n}\right)_{h\mathbb{S}_n}~,
\]

where on the right we consider homotopy coinvariants. This illustrates the fact that although divided power operations do not appear $1$-categorically on a dg $\C^*$-algebra, the $\infty$-category of $\C^*$-algebras will still behaves like an $\infty$-category of divided power algebras and not like an $\infty$-category of usual algebras. 

\medskip

The $\infty$-category of divided power $\C^*$-algebras can be presented by a semi-model structure.

\begin{proposition}
The category of dg $\C^*$-algebras admits a \textit{cofibrantly generated semi-model category structure} given by 

\begin{enumerate}
\item weak-equivalences are given by quasi-isomorphisms,
\item fibrations are given by degree-wise epimorphisms,
\item cofibrations are determined by the left-lifting property.
\end{enumerate}
\end{proposition}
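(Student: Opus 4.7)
The plan is to reduce to Proposition~\ref{prop: tame model structures on P algebras}, which provides, for any $\mathbb{S}$-tame dg operad $\mathcal{P}$, a cofibrantly generated semi-model structure on the category of dg $\mathcal{P}$-algebras with quasi-isomorphisms as weak-equivalences and degree-wise epimorphisms as fibrations. Applied to $\mathcal{P} = \C^*$, that result would produce exactly the semi-model structure claimed here, since the $1$-category of dg $\C^*$-algebras coincides with the category of algebras over the monad $\C^* \, \tilde{\circ}\, (-)$ via the norm map. So the whole task reduces to verifying that the dg operad $\C^*$ is $\mathbb{S}$-tame.

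To check this, the plan is to exploit the quasi-planarity of $\C$: its underlying graded $\mathbb{S}$-module splits as $\C_\pl \otimes \mathbb{S}$, so each $\C(n)$ is degree-wise a free right $\kk[\mathbb{S}_n]$-module. Dualizing arity-wise, one gets $\C^*(n) \cong \C_\pl(n)^* \otimes \kk[\mathbb{S}_n]^*$; since $\kk[\mathbb{S}_n]$ is a Frobenius algebra, $\kk[\mathbb{S}_n]^* \cong \kk[\mathbb{S}_n]$ as $\mathbb{S}_n$-modules, and hence $\C^*(n)$ is a quasi-free dg $\mathbb{S}_n$-module. This is already the fact used in the proof that the norm map $\C^* \circ (-) \to \C^* \, \tilde{\circ}\, (-)$ is an isomorphism, so no extra work is needed here. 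As stated in the example following the definition of $\mathbb{S}$-tameness, any quasi-free dg $\mathbb{S}$-module is $\mathbb{S}$-tame; consequently, $\C^*$ is an $\mathbb{S}$-tame dg operad.

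With $\mathbb{S}$-tameness in hand, Proposition~\ref{prop: tame model structures on P algebras} applies verbatim to $\C^*$ and gives the announced semi-model structure, with quasi-isomorphisms as weak-equivalences, degree-wise epimorphisms as fibrations and cofibrations determined by the left-lifting property. There is no genuine obstacle to overcome: the heart of the argument is packaged inside the general machinery of \cite{pdalgebras}, and the only nontrivial input — the $\mathbb{S}$-tameness of $\C^*$ — is an immediate consequence of the quasi-planarity of $\C$ combined with the self-duality of the group ring $\kk[\mathbb{S}_n]$.
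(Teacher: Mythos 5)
Your proof is correct and follows essentially the same route as the paper: reduce to the general existence result for $\mathbb{S}$-tame operads and verify that $\C^*$ is $\mathbb{S}$-tame because it is quasi-free. The paper simply asserts quasi-freeness of $\C^*$ (it is already used there to show the norm map is an isomorphism), whereas you additionally spell out why it holds via $\C \cong \C_\pl \otimes \mathbb{S}$ and the self-duality of $\kk[\mathbb{S}_n]$; this is a harmless elaboration, not a different argument.
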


\begin{proof}
The underlying dg $\mathbb S$-module of $\C^*$ is quasi-free and therefore $\mathbb{S}$-tame. So, the result follows from Proposition \ref{prop: tame model structures on P algebras}.
\end{proof}

Since this semi-model category structure is transferred from the category of dg modules, the free-forget adjunction induces a monadic adjunction at the level of $\infty$-categories.

\begin{proposition}[{\cite[Section 4]{pdalgebras}}]\label{prop: monadicity of the dervied adjunction}
The adjunction induced at the level of $\infty$-categories by the free-forgetful Quillen adjunction
\[
\begin{tikzcd}[column sep=7pc,row sep=3pc]
\mathsf{dg}~\mathsf{mod}~[\mathrm{Q.iso}^{-1}] \arrow[r, shift left=1.1ex, "\C^* \circ ~ (-)"{name=F}] 
&\mathsf{dg}~\C^*\text{-}\mathsf{alg}~[\mathrm{Q.iso}^{-1}] ~, \arrow[l, shift left=.75ex, "\mathrm{U}"{name=U}] \arrow[phantom, from=F, to=U, , "\dashv" rotate=-90]
\end{tikzcd}
\]
is monadic and the forgetful functor preserves sifted colimits. 
\end{proposition}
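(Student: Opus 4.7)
The plan is to invoke Lurie's $\infty$-categorical monadicity theorem: a right adjoint between presentable $\infty$-categories is monadic provided it is conservative and preserves sifted (homotopy) colimits. Establishing these two properties for the derived forgetful functor $\mathrm{U}$ yields both assertions of the proposition at once. Conservativity is immediate from the definition of the semi-model structure given just above: weak equivalences of dg $\C^*$-algebras are by construction the maps whose underlying morphism of dg modules is a quasi-isomorphism, so after localization $\mathrm{U}$ detects equivalences.

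To prove that $\mathrm{U}$ preserves sifted homotopy colimits, I would reduce the question to the same property for the monad itself. First, sifted homotopy colimits of dg $\C^*$-algebras can be computed as $1$-categorical sifted colimits of pointwise cofibrant diagrams; this is the standard statement for transferred semi-model structures on algebras over $\mathbb{S}$-tame operads, worked out in \cite[Section 4]{pdalgebras}. At the $1$-categorical level, the monad $T = \C^* \comp (-)$ visibly preserves sifted colimits of dg modules (direct sums, coinvariants and tensoring with $\C^*(n)$ are themselves colimits, and sifted colimits commute with $V \mapsto V^{\otimes n}$), so the strict forgetful functor creates sifted colimits. Thus matters reduce to checking that the derived monad
\[
\mathbb{L} T(V) \simeq \bigoplus_{n \geq 0} \bigl(\C^*(n) \otimes V^{\otimes n}\bigr)_{h\mathbb{S}_n}
\]
preserves sifted colimits of dg modules.

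The latter is checked factor by factor. Direct sums and derived coinvariants are themselves $\infty$-colimits, hence preserve every $\infty$-colimit; the $n$-fold derived tensor power $V \mapsto V^{\otimes n}$ preserves sifted colimits because the diagonal $I \longrightarrow I^n$ is cofinal for any sifted $I$ and the derived tensor product is colimit-preserving in each variable. Moreover, since $\C^*(n)$ is quasi-free as a dg $\mathbb{S}_n$-module, $\C^*(n) \otimes V^{\otimes n}$ is $\mathbb{S}_n$-tame whenever $V$ is cofibrant, so the strict coinvariants compute the derived ones and $\mathbb{L} T$ agrees with the $1$-categorical free functor on cofibrant inputs. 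The main obstacle, which is the non-formal input imported from \cite[Section 4]{pdalgebras}, is precisely the identification of sifted homotopy colimits in the semi-model category of dg $\C^*$-algebras with sifted $1$-colimits of cofibrant replacements; once that reduction is granted, the monadicity of the derived adjunction is a direct application of Barr--Beck--Lurie.
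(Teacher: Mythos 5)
Your strategy is the same as the paper's: check that the derived forgetful functor is conservative (immediate, since weak equivalences of dg $\C^*$-algebras are quasi-isomorphisms of underlying dg modules) and preserves sifted colimits, then apply the Barr--Beck--Lurie monadicity theorem; the paper likewise imports the sifted-colimit statement from \cite[Proposition 4.32, Lemma 4.26, Remark 4.30]{pdalgebras}, which is exactly the non-formal reduction you isolate at the end. One point needs correcting, however: your displayed identification $\mathbb{L}T(V) \simeq \bigoplus_{n \geq 0} \bigl(\C^*(n) \otimes V^{\otimes n}\bigr)_{h\mathbb{S}_n}$ is false in positive characteristic, and the paper explicitly warns against it -- since $\C^*(n)$ is quasi-free but \emph{not} $\mathbb{S}_n$-projective, the strict coinvariants do not agree with the homotopy coinvariants, and this failure is precisely why the $\infty$-category of $\C^*$-algebras behaves like divided power algebras rather than ordinary algebras. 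The correct statement is the one you give in your next sentence: quasi-freeness makes $\C^*(n) \otimes V^{\otimes n}$ cofibrant for the \emph{tame} model structure on dg $\mathbb{S}_n$-modules, so the strict coinvariants already compute the relevant derived functor on cofibrant $V$ and $\mathbb{L}T$ agrees with the $1$-categorical free functor there, with no passage through $(-)_{h\mathbb{S}_n}$. Since that tame-cofibrancy observation, not the homotopy-coinvariants formula, is what your factor-by-factor check actually uses, the argument goes through once the erroneous display is deleted.
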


\begin{proof}
The right adjoint $\infty$-functor preserves sifted colimits by \cite[Proposition 4.32]{pdalgebras} combined with \cite[Lemma 4.26, Remark 4.30]{pdalgebras}. Moreover, it is clearly conservative. Thus, it is also monadic by the monadicity theorem for $\infty$-categories \cite[Theorem 4.7.0.3]{HigherAlgebra}.
\end{proof}

\section{Cellular algebras and formal moduli problems}
The goal of this section is to construct the $\infty$-category of cellular $\C^*$-algebras, and show that the $\infty$-category of $\C^*$-algebras admits a presentation in terms of cocartesian presheaves on cellular $\C^*$-algebras. This will allow to reduce any formal moduli problem type of statement to a comparison result between cellular $\C$-algebras and cellular $\C^*$-algebras. That is, between the \textit{absolute} and the \textit{non-absolute} versions of the same algebraic structure. For the rest of this section, $\C$ is a quasi-planar conilpotent dg cooperad.

\subsection{A presentation of algebras over a monad}
We use the main theorem of \cite{presentationalgebras} in order to give a presentation of the $\infty$-category of $\C^*$-algebras purely in terms of cocartesian presheaves on cellular $\C^*$-algebras. Let
\[
\begin{tikzcd}[column sep=7pc,row sep=3pc]
\mathsf{dg}~\mathsf{mod}~[\mathrm{Q.iso}^{-1}] \arrow[r, shift left=1.1ex, "\mathrm{T}(-)"{name=F}] 
&\mathrm{T}\text{-}\mathsf{alg} ~, \arrow[l, shift left=.75ex, "\mathrm{U}"{name=U}] \arrow[phantom, from=F, to=U, , "\dashv" rotate=-90]
\end{tikzcd}
\]
be a monadic adjunction of $\infty$-categories.

\begin{definition}[$m$-cellular $\mathrm{T}$-algebras]
Let $m$ be in $\mathbb{Z} \cup \{\infty\}$. The $\infty$-category of $m$-\textit{cellular} $\mathrm{T}$-algebras is the smallest full sub-category of $\mathrm{T}$-algebras
 
\begin{enumerate}
\item it contains $\mathrm{T}(S^n)$ for any $n \leq m$,

\medskip

\item for any $m$-cellular $\mathrm{T}$-algebra $A$ and any morphism $\mathrm{T}(S^n) \longrightarrow A$, where $n \leq m-1$, the pushout $A \amalg_{\mathrm{T}(S^n)} \mathrm{T}(0)$ is also cellular.
\end{enumerate}
\end{definition}

\begin{definition}[Cocartesian presheaves]
A \textit{cocartesian presheaf} $F$ on the $\infty$-category on $m$-cellular $\mathrm{T}$-algebras amounts to the data of a functor
    \[
    F: \mathsf{Cell}^{\leq m}~\mathrm{T}\text{-}\mathsf{alg}^{\mathsf{op}} \longrightarrow \mathcal{S},
    \]
    satisfying the following conditions:

\begin{enumerate}
\item $F(\mathrm{T}(0)) \simeq \{*\}$, where $\mathrm{T}(0)$ is the initial $\mathrm{T}$-algebra,

\medskip

\item $F$ sends pushouts diagrams of the form
\[
\begin{tikzcd}
\mathrm{T}(S^n) \arrow[r] \arrow[d] \arrow[dr, phantom, "\ulcorner", very near end]
&\mathrm{T}(0) \arrow[d] \\
A \arrow[r]
&A'~,
\end{tikzcd}
\]

to pullbacks in spaces, where $n \leq m-1$.
\end{enumerate}

We denote $\Psh^\lrcorner\left(\mathsf{Cell}^{\leq m}~\mathrm{T}\text{-}\mathsf{alg}\right)$ the full reflective sub $\infty$-category of presheaves on cellular $\mathrm{T}$-algebras.
\end{definition}

\begin{theorem}[{\cite[Theorem 3]{presentationalgebras}}]\label{thm: article Brice présentation cellulaire}
Let $m$ be in $\mathbb{Z} \cup \{\infty\}$ and let
\[
\begin{tikzcd}[column sep=7pc,row sep=3pc]
\mathsf{dg}~\mathsf{mod}~[\mathrm{Q.iso}^{-1}] \arrow[r, shift left=1.1ex, "\mathrm{T}(-)"{name=F}] 
&\mathrm{T}\text{-}\mathsf{alg} ~, \arrow[l, shift left=.75ex, "\mathrm{U}"{name=U}] \arrow[phantom, from=F, to=U, , "\dashv" rotate=-90]
\end{tikzcd}
\]
be a monadic adjunction of $\infty$-categories such that the forgetful functor $\mathrm{U}$ preserves sifted colimits. There is a canonical equivalence of $\infty$-categories
\[
\begin{tikzcd}[column sep=5pc,row sep=5pc]
\Psh^\lrcorner\left(\mathsf{Cell}^{\leq m}~\mathrm{T}\text{-}\mathsf{alg}\right) \arrow[r,"\mathrm{Lan}(i_{\mathrm{T}})"{name=LDC},shift left=1.1ex ] 
&\mathrm{T}\text{-}\mathsf{alg}~,\arrow[l,"\mathrm{Y}"{name=TD},shift left=1.1ex] \arrow[phantom, from=TD, to=LDC, , "\dashv" rotate=-90] 
\end{tikzcd}
\] 
where $\mathrm{Y}$ is the Yoneda nerve $A \mapsto \mathrm{Map}(-, A)$ and $\mathrm{Lan}(i)$ is the left Kan extension of the inclusion functor $i_{\mathrm{T}}$ from cellular $\mathrm{T}$-algebras into all $\mathrm{T}$-algebras.
\end{theorem}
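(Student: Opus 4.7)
The plan is to establish the adjunction $\mathrm{Lan}(i_\mathrm{T}) \dashv \mathrm{Y}$ from general nonsense, and then show separately that its unit and counit are equivalences.

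\textbf{Verification of landing and existence of the adjoint.} First I would verify that $\mathrm{Y}$ actually takes values in $\Psh^\lrcorner$. Given a $\mathrm{T}$-algebra $A$, the presheaf $\mathrm{Map}(-, A)$ on $\mathsf{Cell}^{\leq m}~\mathrm{T}\text{-}\mathsf{alg}$ sends the initial algebra $\mathrm{T}(0)$ to a contractible space, and sends the distinguished cellular pushout squares into pullback squares of mapping spaces, since $\mathrm{Map}(-, A)$ always turns colimits in the first variable into limits. The left Kan extension $\mathrm{Lan}(i_\mathrm{T})$ exists because $\mathrm{T}\text{-}\mathsf{alg}$ is cocomplete, and the adjunction $\mathrm{Lan}(i_\mathrm{T}) \dashv \mathrm{Y}$ then follows from the universal property of left Kan extensions.

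\textbf{Counit.} To show the counit $\mathrm{Lan}(i_\mathrm{T})\circ \mathrm{Y}(A) \to A$ is an equivalence, I would exploit the hypothesis that $\mathrm{U}$ preserves sifted colimits. By monadicity, every $\mathrm{T}$-algebra $A$ admits a canonical bar-type presentation as a sifted colimit of free $\mathrm{T}$-algebras $\mathrm{T}(V_i)$. Each dg module $V_i$ is in turn a sifted colimit of finite-dimensional bounded dg modules, and every such finite-dimensional dg module is built from the spheres $\{S^n\}$ by iterated pushouts along $S^n \to 0$. Applying $\mathrm{T}$, which is a left adjoint and so preserves colimits, presents $A$ as a colimit of cellular $\mathrm{T}$-algebras. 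Since $\mathrm{Lan}(i_\mathrm{T})$ preserves colimits and agrees with $i_\mathrm{T}$ on representables, the counit is then identified with the canonical comparison map out of this colimit presentation, which is an equivalence by construction.

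\textbf{Unit.} By a standard argument for reflective $\infty$-subcategories, the unit $F \to \mathrm{Y}\circ \mathrm{Lan}(i_\mathrm{T})(F)$ is an equivalence precisely when $F$ is determined by its values on the generating cells together with a gluing condition matching pushouts along $\mathrm{T}(S^n) \to \mathrm{T}(0)$ to pullbacks of spaces; this is the defining property of $\Psh^\lrcorner$. Explicitly, one writes any such $F$ iteratively as a colimit of representables via these cellular attachments, and both $\mathrm{Lan}(i_\mathrm{T})$ and $\mathrm{Y}$ preserve these attachments, so the unit is an equivalence on representables and extends to all of $\Psh^\lrcorner$ by the colimit-presentation.

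\textbf{Main obstacle.} The crucial technical step is the cell decomposition of an arbitrary $\mathrm{T}$-algebra appearing in the counit argument. One must verify that the resulting presentation of $A$ as a colimit of cellular $\mathrm{T}$-algebras respects the truncation $\leq m$ and that the indexing diagram interacts correctly with the cocartesian condition on presheaves. The delicate combinatorial content is the interplay between the freeness of $\mathrm{T}$ on the generating spheres $S^n$ and the restricted pushout shape $\mathrm{T}(S^n) \to \mathrm{T}(0)$ allowed in the definition of cellular objects; once this is handled, both unit and counit follow formally.
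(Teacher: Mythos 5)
This statement is not proved in the paper at all: it is imported verbatim as \cite[Theorem 3]{presentationalgebras}, so there is no internal proof to compare your argument against. Judged on its own terms, your outline has the right overall shape (show $\mathrm{Y}$ lands in cocartesian presheaves, set up the adjunction, check unit and counit), but two steps carry the real content and both are left with genuine gaps.

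First, in the counit argument you assert that every finite-dimensional dg module is built from the spheres $\{S^n\}$ by pushouts along $S^n \to 0$, and hence that every free algebra $\mathrm{T}(V)$ is a colimit of cellular algebras. For finite $m$ (and the paper applies the theorem with $m=0$) this is false as stated: the cellular site only contains $\mathrm{T}(S^n)$ for $n \leq m$, and attaching cells along $\mathrm{T}(S^k) \to \mathrm{T}(0)$ with $k \leq m-1$ only ever produces objects whose generators sit in degrees $\leq m$, so a module with homology above degree $m$ is never cellular. The missing idea is that $\mathrm{T}(S^{n+1}) \simeq \mathrm{T}(0) \amalg_{\mathrm{T}(S^{n})} \mathrm{T}(0)$ is itself a pushout of the \emph{distinguished} shape, so the spheres above degree $m$ are recovered as iterated suspensions of cellular objects, and it is exactly the cocartesian condition on presheaves (forcing $F(\mathrm{T}(S^{n+1})) \simeq \Omega F(\mathrm{T}(S^n))$) that makes $\mathrm{Lan}(i_{\mathrm{T}})$ and $\mathrm{Y}$ interact correctly with these suspensions. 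You flag this under ``Main obstacle'' but do not resolve it, and without it the density of the cellular site --- hence full faithfulness of $\mathrm{Y}$ --- is not established. Second, your unit argument is essentially circular: saying the unit is an equivalence ``precisely when $F$ is determined by its values on the generating cells together with a gluing condition\ldots{} this is the defining property of $\Psh^\lrcorner$'' restates the conclusion. The cocartesian condition is a condition on $F$; that it forces $F$ to lie in the essential image of $\mathrm{Y}$ (equivalently, that the canonical presentation of $F$ as a colimit of representables is preserved by $\mathrm{Y}\circ\mathrm{Lan}(i_{\mathrm{T}})$) is the theorem, and is where the hypothesis that $\mathrm{U}$ preserves sifted colimits must be used --- namely to show that $\mathrm{Map}(\mathrm{T}(S^n),-) \simeq \mathrm{Map}(S^n, \mathrm{U}(-))$ preserves sifted colimits, so that $\mathrm{Y}$ does too and the counit can be checked on free algebras. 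As written, the proposal invokes that hypothesis only for the bar resolution, which is not where the difficulty lies.
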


\begin{notation}
The $\infty$-category of $0$-cellular $\C^*$-algebras will be referred simply as the $\infty$-category of cellular $\C^*$-algebras.
\end{notation}

\begin{theorem}\label{thm: présentation des C^*-algebres}
There is a canonical equivalence of $\infty$-categories
\[
\begin{tikzcd}[column sep=5pc,row sep=5pc]
\Psh^\lrcorner(\mathsf{Cell}~\C^*\text{-}\mathsf{alg}) \arrow[r,"\mathrm{Lan}(i_{\C^*})"{name=LDC},shift left=1.1ex ] 
&\mathsf{dg}~ \C^*\text{-}\mathsf{alg}~[\mathrm{Q.iso}^{-1}]~,\arrow[l,"\mathrm{Y}"{name=TD},shift left=1.1ex] \arrow[phantom, from=TD, to=LDC, , "\dashv" rotate=-90] 
\end{tikzcd}
\] 
between the $\infty$-category of $\C^*$-algebras and the $\infty$-category of cocartesian presheaves on cellular $\C^*$-algebras.
\end{theorem}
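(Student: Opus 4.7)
The plan is to deduce the theorem as an immediate application of the general presentation result \ref{thm: article Brice présentation cellulaire}. That theorem takes as input any monadic adjunction
\[
\begin{tikzcd}[column sep=5pc]
\mathsf{dg}~\mathsf{mod}~[\mathrm{Q.iso}^{-1}] \arrow[r, shift left=1.1ex, "\mathrm{T}(-)"{name=F}]
&\mathrm{T}\text{-}\mathsf{alg}~, \arrow[l, shift left=.75ex, "\mathrm{U}"{name=U}] \arrow[phantom, from=F, to=U, , "\dashv" rotate=-90]
\end{tikzcd}
\]
whose right adjoint preserves sifted colimits, and outputs the desired Yoneda-type equivalence with cocartesian presheaves on $m$-cellular $\mathrm{T}$-algebras for any $m \in \mathbb{Z} \cup \{\infty\}$.

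First, I would observe that the relevant monadic adjunction has already been set up: by Proposition \ref{prop: monadicity of the dervied adjunction}, the free-forgetful adjunction
\[
\begin{tikzcd}[column sep=7pc]
\mathsf{dg}~\mathsf{mod}~[\mathrm{Q.iso}^{-1}] \arrow[r, shift left=1.1ex, "\C^* \circ ~ (-)"{name=F}]
&\mathsf{dg}~\C^*\text{-}\mathsf{alg}~[\mathrm{Q.iso}^{-1}] \arrow[l, shift left=.75ex, "\mathrm{U}"{name=U}] \arrow[phantom, from=F, to=U, , "\dashv" rotate=-90]
\end{tikzcd}
\]
is monadic at the $\infty$-categorical level, and the forgetful functor preserves sifted colimits. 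This verifies the two hypotheses of Theorem \ref{thm: article Brice présentation cellulaire} with $\mathrm{T}(-) = \C^* \circ (-)$.

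Second, I would specialize $m = 0$ in Theorem \ref{thm: article Brice présentation cellulaire}. By convention introduced just before the theorem statement, the $\infty$-category of $0$-cellular $\C^*$-algebras is precisely what we call cellular $\C^*$-algebras. The initial $\C^*$-algebra $\mathrm{T}(0) = \C^* \circ (0)$ coincides with what we denote $0^{\C^*}$, and the generating cells $\mathrm{T}(S^n) = \C^* \circ (S^n)$ match the generators used to build up cellular $\C^*$-algebras. Consequently the notions of cocartesian presheaf match on the nose.

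Applying Theorem \ref{thm: article Brice présentation cellulaire} in this setting yields exactly the stated equivalence
\[
\begin{tikzcd}[column sep=5pc]
\Psh^\lrcorner(\mathsf{Cell}~\C^*\text{-}\mathsf{alg}) \arrow[r,"\mathrm{Lan}(i_{\C^*})"{name=L},shift left=1.1ex ]
&\mathsf{dg}~ \C^*\text{-}\mathsf{alg}~[\mathrm{Q.iso}^{-1}]~, \arrow[l,"\mathrm{Y}"{name=R},shift left=1.1ex] \arrow[phantom, from=R, to=L, , "\dashv" rotate=-90]
\end{tikzcd}
\]
where $\mathrm{Y}$ is the Yoneda nerve and $\mathrm{Lan}(i_{\C^*})$ is the left Kan extension along the inclusion of cellular $\C^*$-algebras. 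Since all the hard work has been done in Proposition \ref{prop: monadicity of the dervied adjunction} (monadicity plus preservation of sifted colimits) and Theorem \ref{thm: article Brice présentation cellulaire}, there is nothing else to prove; the only possible obstacle would be a mismatch between the two notions of cellular objects, but this is ruled out by the notation convention fixed above.
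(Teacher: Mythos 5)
Your proposal is correct and coincides with the paper's own proof, which likewise applies Theorem \ref{thm: article Brice présentation cellulaire} with $m=0$ to the monad $\C^* \circ (-)$, with the hypotheses supplied by Proposition \ref{prop: monadicity of the dervied adjunction}. The extra remark about matching the two notions of cellular objects is a harmless elaboration of the notational convention fixed in the paper.
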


\begin{proof}
We apply Theorem \ref{thm: article Brice présentation cellulaire} with $m = 0$ to monad $\C^* \circ (-)$, which by Propositions \ref{prop: monadicity of the dervied adjunction} induces a monad which preserves sifted colimits. 
\end{proof}

Every cellular $\C^*$-algebra is quasi-isomorphic strictly cellular dg $\C^*$-algebra, which is cofibrant in the semi-model category of dg $\C^*$-algebras.

\begin{definition}[Strictly cellular dg $\C^*$-algebra]
A \textit{strictly cellular} dg $\C^*$-algebra is an quasi-free dg $\C^*$-algebra of the form
\[
\left(\C^* \circ V, d_{~\C^* \circ V} + d_2 \right)~,
\]
where 
\begin{enumerate}
\item $V$ is a dg module concentrated in degrees $\leq  0$ of total finite dimension, endowed with a exhaustive filtration 
\[
0 = V(-1) \subseteq V(0) \subseteq V(1) \subseteq \cdots \subseteq V~,
\]
\item the differential $d_2$ satisfies that
\[
d_2(V(i)) \subseteq \C^* \circ V(i-1)~.
\]
\end{enumerate}
\end{definition}

\begin{proposition}\label{prop: charactérisation des C*-cellulaires}
Any cellular $\C^*$-algebra is quasi-isomorphic to a strictly cellular dg $\C^*$-algebra.
\end{proposition}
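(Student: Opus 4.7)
The plan is to induct on the tower of pushouts defining cellular $\C^*$-algebras, producing an explicit strictly cellular model at each stage. The base case is immediate: each generator $\C^* \circ S^n$ with $n \leq 0$ is already strictly cellular, taking $V \coloneqq S^n$ placed at filtration level $V(0)$ and the trivial twisting $d_2 = 0$.

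For the inductive step, I would suppose $\mathfrak{g}$ is a cellular $\C^*$-algebra with a strictly cellular representative $\mathfrak{g}_s = (\C^* \circ V, d_{\C^* \circ V} + d_2)$ whose filtration exhausts at some level $N$, and consider a new cellular pushout $\mathfrak{g}' \coloneqq \mathfrak{g} \amalg^h_{\C^* \circ S^n} 0^{\C^*}$ with $n \leq -1$. Since $\C^* \circ S^n$ and $\mathfrak{g}_s$ are both cofibrant in the semi-model structure (the latter being quasi-free), the $\infty$-categorical map $\C^* \circ S^n \to \mathfrak{g}$ lifts to an honest morphism of dg $\C^*$-algebras, classified by the free-forget adjunction as a cycle $y \in (\C^* \circ V)_n$. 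By finite-dimensionality and exhaustiveness of the filtration, $y \in \C^* \circ V(N)$ for $N$ large enough. I would then construct
\[
\mathfrak{g}'_s \coloneqq \bigl( \C^* \circ (V \oplus \kk z), d' \bigr), \qquad d'|_V = d_{\C^* \circ V} + d_2, \qquad d' z = y,
\]
with $z$ a new generator in degree $n+1 \leq 0$, and extend the filtration by $V'(N+1) \coloneqq V \oplus \kk z$. The new twisting $d_2'$ satisfies $d_2'(V'(N+1)) \subseteq \C^* \circ V'(N) = \C^* \circ V$, the lower filtration conditions are untouched, and square-zeroness of $d'$ is immediate from $y$ being a cycle, so $\mathfrak{g}'_s$ is strictly cellular.

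It remains to identify $\mathfrak{g}'_s$ with $\mathfrak{g}'$. By construction, $\mathfrak{g}'_s$ is the $1$-categorical pushout of $\mathfrak{g}_s \leftarrow \C^* \circ S^n \hookrightarrow \C^* \circ D^{n+1}$ along the generating cofibration $\C^* \circ S^n \hookrightarrow \C^* \circ D^{n+1}$, and as such it computes the homotopy pushout $\mathfrak{g} \amalg^h_{\C^* \circ S^n} \C^* \circ D^{n+1}$ in the semi-model structure. The main obstacle is then to identify $\C^* \circ D^{n+1}$ with $0^{\C^*}$ in the $\infty$-category of $\C^*$-algebras. I expect this to follow from the quasi-planarity of $\C$: in each homological degree, $\C^*(k)$ is a direct sum of regular representations of $\mathbb{S}_k$, hence $\mathbb{S}_k$-projective, so $\C^*(k) \otimes (D^{n+1})^{\otimes k}$ with the diagonal action is also $\mathbb{S}_k$-projective. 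Since $(D^{n+1})^{\otimes k}$ is acyclic, its tensor with $\C^*(k)$ is acyclic, and taking $\mathbb{S}_k$-coinvariants of a projective acyclic complex preserves acyclicity. Summing over $k \geq 1$ and using that $\C$ is $0$-reduced (so that $0^{\C^*} = 0$), we obtain $\C^* \circ D^{n+1} \simeq 0^{\C^*}$, which closes the induction.
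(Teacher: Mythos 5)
Your argument is essentially the paper's own proof written out in full: the paper observes in one line that a cellular $\C^*$-algebra is obtained by finitely many cell attachments along the generating cofibrations $\C^* \circ S^n \hookrightarrow \C^* \circ D^{n+1}$, which is exactly the induction you perform, and your bookkeeping of degrees (the new generator $z$ sits in degree $n+1 \leq 0$) and of the filtration is correct. One justification at the very end is imprecise, although the step it supports is fine: the claim that taking $\mathbb{S}_k$-coinvariants of an acyclic complex of projectives preserves acyclicity is false for \emph{unbounded} complexes over a non-semisimple group algebra (this failure is precisely why the tame model structure is introduced in the first place), and $\C^*(k)$ need not be bounded. The correct justification of $\C^* \circ D^{n+1} \simeq 0^{\C^*}$ is that $\C^* \circ (-)$ is left Quillen for the transferred semi-model structure of Proposition~\ref{prop: tame model structures on P algebras}, every dg module over a field is cofibrant, and hence by Ken Brown's lemma the free functor sends the quasi-isomorphism $D^{n+1} \qi 0$ to a weak equivalence, i.e.\ a quasi-isomorphism, of dg $\C^*$-algebras. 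Note also that $0$-reducedness of $\C$ is not assumed in the section where this proposition lives and is not needed for your argument: without it the comparison map simply identifies $\C^* \circ D^{n+1}$ with the initial algebra $0^{\C^*} = \C^*(0)$ rather than with $0$.
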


\begin{proof}
Follows from the definition of a cellular $\C^*$-algebra. Indeed, they are given by a finite number of (homotopy) pushouts of the form
\[
\begin{tikzcd}[column sep=3.5pc,row sep=3.5pc]
\C^* \circ S^n \arrow[r] \arrow[d] \arrow[dr, phantom, "\ulcorner", very near end]
&\C^* \circ D^n \arrow[d] \\
\C^* \circ V(i-1) \arrow[r]
&\C^* \circ V(i)~,
\end{tikzcd}
\]
so the generators are a finite dimensional dg module in degrees $\leq  0$, and it clearly follows that $d_2(V(i)) \subseteq \C^* \circ V(i-1)~.$ 
\end{proof}


\subsection{The canonical adjunction}
There is a canonical adjunction between the $\infty$-category of $\C$-algebras and the $\infty$-category of $\C^*$-algebras. This adjunction induces an adjunction between the $\infty$-categories of formal moduli problems over $\Omega \C$-algebras and of $\C^*$-algebras.

\begin{lemma}\label{lemma: adjunction Ab - Res}
There is a Quillen adjunction 
\[
\begin{tikzcd}[column sep=5pc,row sep=5pc]
\mathsf{dg}~\mathcal{C}\text{-}\mathsf{alg}^{\mathsf{qp}\text{-}\mathsf{comp}} \arrow[r,"\mathrm{Res}"{name=LDC},shift left=1.1ex ] 
&\mathsf{dg}~\mathcal{C}^*\text{-}\mathsf{alg}~,\arrow[l,"\mathrm{Ab}"{name=TD},shift left=1.1ex] \arrow[phantom, from=TD, to=LDC, , "\dashv" rotate=90] 
\end{tikzcd}
\] 

between the category of qp-complete dg $\C$-algebras and the category of dg $\C^*$-algebras.
\end{lemma}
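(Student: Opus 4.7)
The plan is to derive this Quillen adjunction from a morphism of monads on dg modules, and then verify the two Quillen axioms. The monad for dg $\mathcal{C}^*$-algebras is $\mathcal{C}^* \circ (-)$, while the monad for (possibly non-complete) dg $\mathcal{C}$-algebras is $(-)^{\mathcal{C}} = \prod_{n \geq 0} \mathrm{Hom}_{\mathbb{S}_n}(\mathcal{C}(n), (-)^{\otimes n})$. Since $\mathcal{C}^*(n)$ carries a degree-wise free $\mathbb{S}_n$-action, the norm map identifies $\mathcal{C}^* \circ V \cong \bigoplus_{n \geq 0} (\mathcal{C}^*(n) \otimes V^{\otimes n})^{\mathbb{S}_n}$, and the inclusion of the direct sum into the product then furnishes a natural transformation
\[
\nu_V: \mathcal{C}^* \circ V \hookrightarrow V^{\mathcal{C}}.
\]
I would then verify that $\nu$ is a morphism of monads: compatibility with units reduces to the coaugmentation of $\mathcal{C}$, and compatibility with multiplications follows from the fact that the norm maps assemble into an oplax monoidal structure on the identity functor of symmetric sequences between the composition products $\circ$ and $\tilde{\circ}$, which is precisely what governs the respective monad multiplications.

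From $\nu$, standard category theory produces the restriction functor $\mathrm{Res}$ sending a $\mathcal{C}$-algebra $V^{\mathcal{C}} \to A$ to the $\mathcal{C}^*$-algebra $\mathcal{C}^* \circ A \xrightarrow{\nu_A} A^{\mathcal{C}} \to A$; this functor is the identity on underlying dg modules. The left adjoint $\mathrm{Ab}$ is obtained by the standard relative-free construction, realized as a reflexive coequalizer in qp-complete dg $\mathcal{C}$-algebras, which exists since qp-complete dg $\mathcal{C}$-algebras have all colimits.

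For the Quillen condition, fibrations in both (semi-)model structures are the degree-wise surjections, which $\mathrm{Res}$ preserves trivially since it acts as the identity on underlying dg modules. By Proposition \ref{prop: W.eq are quasi-isos}, every weak equivalence of qp-complete dg $\mathcal{C}$-algebras is in particular a quasi-isomorphism on underlying dg modules, and quasi-isomorphisms are precisely the weak equivalences of dg $\mathcal{C}^*$-algebras; hence $\mathrm{Res}$ preserves weak equivalences, and in particular trivial fibrations. The main subtle point will be verifying that $\nu$ respects the monad multiplications; everything else is either formal or reduces to already-cited statements about underlying dg modules.
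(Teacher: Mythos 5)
Your proposal is correct and follows essentially the same route as the paper: the adjunction is induced by the morphism of monads obtained by composing the norm map with the inclusion of the direct sum into the product, and $\mathrm{Res}$ is right Quillen because it preserves degree-wise epimorphisms and because every weak-equivalence of qp-complete dg $\C$-algebras is a quasi-isomorphism by Proposition \ref{prop: W.eq are quasi-isos}. The extra details you supply (checking the monad-morphism axioms via the oplax monoidal structure of the norm, and constructing $\mathrm{Ab}$ by a reflexive coequalizer) are consistent with, and slightly more explicit than, the paper's argument.
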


\begin{proof}
There is a morphism of monads

\[
\bigoplus_{n \geq 0} \mathcal{C}(n)^* \otimes_{\mathbb{S}_n} (-)^{\otimes n} \rightarrowtail \prod_{n \geq 0} \mathrm{Hom}_{\mathbb{S}_n} (\mathcal{C}(n), (-)^{\otimes n}) 
\]

which induces the adjunction $\mathrm{Ab} \dashv \mathrm{Res}$, given by the composition of the norm map from coinvariants to invariants with the natural inclusion $\mathcal{C}(n)^* \otimes (-)^{\otimes n} \longrightarrow \mathrm{Hom} (\mathcal{C}(n), (-)^{\otimes n})$. The functor $\mathrm{Res}$ is a right Quillen functor since it preserves fibrations and since any weak-equivalence of dg $\C$-algebras is in particular a quasi-isomorphism by Proposition \ref{prop: W.eq are quasi-isos}.
\end{proof}

\begin{lemma}\label{lemma: foncteur Ab infini-cat}
The derived functor $\mathbb{L}\mathrm{Ab}$ restricts to a functor
\[
\begin{tikzcd}[column sep=4pc,row sep=5pc]
\mathsf{Cell}~\C^*\text{-}\mathsf{alg} \arrow[r,"\mathrm{Ab}"{name=TD}]
&\mathsf{Cell}~\C\text{-}\mathsf{alg}~,
\end{tikzcd}
\] 

between the sub-$\infty$-categories of cellular $\C^*$-algebras and cellular $\C$-algebras.
\end{lemma}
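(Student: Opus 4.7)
The plan is to exploit the fact that $\mathbb{L}\mathrm{Ab}$ is the left adjoint of an adjunction of $\infty$-categories, derived from the Quillen pair of Lemma \ref{lemma: adjunction Ab - Res}. As a left adjoint, it automatically preserves all $\infty$-colimits, and in particular it preserves the initial object and pushouts. Since cellular $\C^*$-algebras and cellular $\C$-algebras are each characterized by a very simple generation process (generating cells plus iterated pushouts), the whole problem reduces to computing $\mathbb{L}\mathrm{Ab}$ on the generating cells and checking that the generating attaching maps are sent to the generating attaching maps.

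First, I would observe that for every $n \leq 0$ the dg module $S^n$ is cofibrant, and hence the free algebra $\C^* \circ S^n$ is cofibrant in the semi-model structure on dg $\C^*$-algebras; the initial $\C^*$-algebra $\C^* \circ 0$ is trivially cofibrant. Consequently, $\mathbb{L}\mathrm{Ab}$ is represented on these objects by the underived functor $\mathrm{Ab}$. Comparing the three adjunctions $\C^* \circ (-) \dashv \mathrm{U}$, $\mathrm{Ab} \dashv \mathrm{Res}$, and $(-)^{\C} \dashv \mathrm{U}'$, one obtains a natural isomorphism
\[
\mathrm{Ab}(\C^* \circ V) \cong V^{\C}
\]
of qp-complete dg $\C$-algebras for every dg module $V$. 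Specialising this to $V = S^n$ and to $V = 0$ shows that $\mathbb{L}\mathrm{Ab}$ sends the generating $\C^*$-cells $\C^* \circ S^n$ to the generating $\C$-cells $(S^n)^{\C}$, sends $\C^* \circ 0$ to $0^{\C}$, and sends the structure maps $\C^* \circ S^n \to \C^* \circ 0$ to the corresponding maps $(S^n)^{\C} \to 0^{\C}$.

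Next, I would run an induction on the cellular decomposition. By definition, any cellular $\C^*$-algebra is obtained from generators $\C^* \circ S^m$ with $m \leq 0$ by a finite number of iterated pushouts along the maps $\C^* \circ S^n \to \C^* \circ 0$ for $n \leq -1$. Applying $\mathbb{L}\mathrm{Ab}$ to such a construction and using both that it preserves pushouts and that it matches generators with generators, each inductive step transports to a pushout along $(S^n)^{\C} \to 0^{\C}$ starting from some $(S^m)^{\C}$; these are exactly the operations that define cellular $\C$-algebras, so the image is cellular.

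I do not expect serious obstacles here. The only genuinely non-formal step is the identification $\mathrm{Ab}(\C^* \circ V) \cong V^{\C}$, which is really just a computation of universal properties through the three adjunctions involved; once this is in hand, the rest of the argument is a purely combinatorial induction that follows from the $\infty$-categorical colimit-preservation properties of $\mathbb{L}\mathrm{Ab}$.
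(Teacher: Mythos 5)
Your proposal is correct and follows essentially the same route as the paper: identify $\mathrm{Ab}(\C^* \circ S^n) \simeq (S^n)^{\C}$ on the generating cells (the paper asserts this directly, you derive it from uniqueness of left adjoints through the free--forgetful adjunctions), then use that the left adjoint $\mathbb{L}\mathrm{Ab}$ preserves homotopy pushouts and the initial object to propagate cellularity by induction on the cell structure. The extra care you take about cofibrancy of the free algebras $\C^* \circ S^n$ (so that $\mathbb{L}\mathrm{Ab}$ is computed by the underived functor there) is a point the paper leaves implicit, but it does not change the argument.
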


\begin{proof}
The adjunction of Lemma \ref{lemma: adjunction Ab - Res} induces an adjunction between the $\infty$-categories of $\C$-algebras and of $\C^*$-algebras. Clearly, for any $n \leq 0$, there is a weak-equivalence (in fact, an isomorphism) of $\C$-algebras
\[
\mathrm{Ab}(\C^* \circ S^n)~ \simeq ~ (S^n)^{\C}~,
\]
and moreover, it preserves homotopy pushouts, therefore it sends any cellular $\C^*$-algebra to a cellular $\C$-algebra.
\end{proof}

\begin{remark}
The functor $\mathrm{Ab}$ preserves (quasi)-free objects in general.
\end{remark}

Since the left adjoint functor $\mathbb{L}\mathrm{Ab}$ restricts to cellular objects on both sides, it induces the following commutative square of left adjoint functors
\[
\begin{tikzcd}[column sep=2pc,row sep=3pc]
    \Psh(\mathsf{Cell}~\C^*\text{-}\mathsf{alg})
    \ar[r,"\mathbb{L}\mathrm{Ab}_!"] \ar[d,"\mathrm{Lan}(i_{\C^*})",swap]
    & \Psh(\mathsf{Cell}~\C\text{-}\mathsf{alg})
    \ar[d,"\mathrm{Lan}(i_{\C})"]
    \\
    \mathsf{dg}~\mathcal{C}^*\text{-}\mathsf{alg}~[\mathrm{Q.iso}^{-1}]
    \ar[r,"\mathbb{L}\mathrm{Ab}"]
    &\mathsf{dg}~\mathcal{C}\text{-}\mathsf{alg}^{\mathsf{qp}\text{-}\mathsf{comp}}~[\mathrm{W}^{-1}]~.
\end{tikzcd}
\]
The left Kan extensions $\mathrm{Lan}(i)$ of the inclusion of cellular algebras into all algebras factor on both sides through cocartesian presheaves. This gives the following commutative square
\[
\begin{tikzcd}[column sep=2pc,row sep=3pc]
    \Psh^\lrcorner(\mathsf{Cell}~\C^*\text{-}\mathsf{alg})
    \ar[r,"\mathbb{L}\mathrm{Ab}_!"] \ar[d,"\mathrm{Lan}(i_{\C^*})",swap]
    & \Psh^\lrcorner(\mathsf{Cell}~\C\text{-}\mathsf{alg})
    \ar[d,"\mathrm{Lan}(i_{\C})"]
    \\
    \mathsf{dg}~\mathcal{C}^*\text{-}\mathsf{alg}~[\mathrm{Q.iso}^{-1}]
    \ar[r,"\mathbb{L}\mathrm{Ab}"]
    &\mathsf{dg}~\mathcal{C}\text{-}\mathsf{alg}^{\mathsf{qp}\text{-}\mathsf{comp}}~[\mathrm{W}^{-1}]~.
\end{tikzcd}
\]

of left adjoint functors. By Theorem \ref{thm: présentation des C^*-algebres}, the functor $\mathrm{Lan}(i_{\C^*})$ is an equivalence. By Proposition \ref{prop: FMP simeq Functors from cellular C-algebras}, there is also an equivalence of categories
\[
\widehat{\mathrm{B}}_{\iota,!}: \FMP^c_{\Omega \C} \longrightarrow \Psh^\lrcorner(\mathsf{Cell}~\C\text{-}\mathsf{alg})~.
\]
Therefore pre and post composing the adjunction 
\[
\begin{tikzcd}[column sep=5pc,row sep=5pc]
 \Psh^\lrcorner(\mathsf{Cell}~\C^*\text{-}\mathsf{alg})~\arrow[r,"\mathbb{L}\mathrm{Ab}_!"{name=LDC},shift left=1.1ex ] 
&\Psh^\lrcorner(\mathsf{Cell}~\C\text{-}\mathsf{alg}),\arrow[l,"\mathbb{L}\mathrm{Ab}^*"{name=TD},shift left=1.1ex] \arrow[phantom, from=TD, to=LDC, , "\dashv" rotate=-90] 
\end{tikzcd}
\] 
with the aforementioned two equivalences, we get an adjunction
\[
\begin{tikzcd}[column sep=5pc,row sep=5pc]
\mathsf{dg}~\C^*\text{-}\mathsf{alg}~[\mathrm{Q.iso}^{-1}]~\arrow[r,"\Phi"{name=LDC},shift left=1.1ex ] 
&\mathsf{FMP}_{\Omega\C}^c,\arrow[l,"\Psi"{name=TD},shift left=1.1ex] \arrow[phantom, from=TD, to=LDC, , "\dashv" rotate=90] 
\end{tikzcd}
\] 
\vspace{0.1pc}

relating the $\infty$-category of divided power dg $\C^*$-algebras to the $\infty$-category of formal moduli problems over $\Omega \C$-algebras.

\begin{lemma}\label{lemma: cellular equiv = quasi-iso somme produit sur les parfaits2}
The following assertions about the functor 
\[
\begin{tikzcd}[column sep=4pc,row sep=5pc]
\mathsf{Cell}~\C^*\text{-}\mathsf{alg}
\arrow[r,"\mathbb{L}\mathrm{Ab}"{name=TD}]
&\mathsf{Cell}~\C\text{-}\mathsf{alg}
\end{tikzcd}
\] 
are equivalent
\begin{enumerate}
    \item it is fully faithful,
    \item it is an equivalence of $\infty$-categories.
\end{enumerate}
\end{lemma}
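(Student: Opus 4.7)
The implication (2)$\Rightarrow$(1) is immediate since every equivalence is fully faithful, so the entire content is in showing that (1)$\Rightarrow$(2). Assume that $\mathbb{L}\mathrm{Ab}$ is fully faithful; to get an equivalence, it suffices to establish essential surjectivity onto $\mathsf{Cell}~\C\text{-}\mathsf{alg}$.

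The strategy is to argue by induction on the cellular presentation of an object $\mathfrak g \in \mathsf{Cell}~\C\text{-}\mathsf{alg}$, using two structural properties of $\mathbb{L}\mathrm{Ab}$ established earlier: it is a left adjoint, hence preserves homotopy pushouts and the initial object, and by Lemma \ref{lemma: foncteur Ab infini-cat} it sends the generators $\C^*\circ S^n$ to the generators $(S^n)^{\C}$ (up to equivalence). For the base case, the initial $\C$-algebra is $\mathbb{L}\mathrm{Ab}(\C^*(0))$ since left adjoints preserve initial objects. For the inductive step, suppose $\mathfrak g'$ is obtained from a cellular $\C$-algebra $\mathfrak g$ via a pushout
\[
\begin{tikzcd}
(S^n)^{\C} \arrow[r,"f"] \arrow[d] \arrow[dr,phantom,"\ulcorner",very near end]
& 0^{\C} \arrow[d] \\
\mathfrak g \arrow[r]
& \mathfrak g'
\end{tikzcd}
\]
with $n\le -1$, and that by induction there is a cellular $\C^*$-algebra $A$ with a chosen equivalence $\mathbb{L}\mathrm{Ab}(A)\simeq \mathfrak g$.

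Here is where the fully faithful hypothesis enters: the induced map of mapping spaces
\[
\mathrm{Map}_{\C^*}\left(\C^*\circ S^n, A\right) \;\longrightarrow\; \mathrm{Map}_{\C}\left((S^n)^{\C}, \mathbb{L}\mathrm{Ab}(A)\right) \simeq \mathrm{Map}_{\C}\left((S^n)^{\C}, \mathfrak g\right)
\]
is an equivalence, so the attaching map $f$ lifts, up to homotopy, to a morphism $\tilde f: \C^*\circ S^n \to A$. Forming the pushout of $\tilde f$ along $\C^*\circ S^n \to \C^*(0)$ yields a cellular $\C^*$-algebra $A'$, and since $\mathbb{L}\mathrm{Ab}$ preserves pushouts and sends $\C^*\circ S^n \to \C^*(0)$ to $(S^n)^{\C}\to 0^{\C}$, we obtain an equivalence $\mathbb{L}\mathrm{Ab}(A')\simeq \mathfrak g'$ extending the one on $\mathfrak g$. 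As every cellular $\C$-algebra is built by finitely many such attaching pushouts, this completes the induction and establishes essential surjectivity.

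I do not expect any significant obstacle: the argument is entirely formal, relying only on the universal property of pushouts, the preservation statements already noted, and the definition of full faithfulness in terms of mapping spaces. The only subtle point to keep track of is that the equivalences $\mathbb{L}\mathrm{Ab}(\C^*\circ S^n)\simeq (S^n)^{\C}$ are natural enough to make the lift of the attaching map compatible with the inductive equivalence $\mathbb{L}\mathrm{Ab}(A)\simeq \mathfrak g$, but this is exactly what full faithfulness buys us via the mapping-space equivalence.
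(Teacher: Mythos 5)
Your proof is correct and follows essentially the same route as the paper's: the paper's (much terser) argument also observes that, granting full faithfulness, the essential image of $\mathbb{L}\mathrm{Ab}$ is a full subcategory containing the generators $(S^n)^{\C}$ and closed under the defining pushouts, so by minimality of $\mathsf{Cell}~\C\text{-}\mathsf{alg}$ the functor is essentially surjective. Your version simply makes the cell-by-cell induction and the lifting of attaching maps via the mapping-space equivalence explicit.
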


\begin{proof}
Suppose the functor $\mathbb{L}\mathrm{Ab}$ is fully faithful, then it induces an equivalence between the $\infty$-categories of cellular $\C^*$-algebras and of cellular $\C$-algebras. Indeed, since cellular $\C$-algebras are the smallest sub-$\infty$-category of $\C$-algebras generated by $(S^n)^{\C}$ under certain pushouts, $\mathbb{L}\mathrm{Ab}$ is also essentially surjective. 
\end{proof}

\begin{theorem}\label{Thm: the canonical adjunction}
The following assertions are equivalent
\begin{enumerate}
\medskip
    \item The adjunction 
    	\[
\begin{tikzcd}[column sep=5pc,row sep=5pc]
\mathsf{FMP}_{\Omega\C} \arrow[r,"\Phi"{name=LDC},shift left=1.1ex ] 
&\mathsf{dg}~\C^*\text{-}\mathsf{alg}~[\mathrm{Q.iso}^{-1}]~,\arrow[l,"\Psi"{name=TD},shift left=1.1ex] \arrow[phantom, from=TD, to=LDC, , "\dashv" rotate=90] 
\end{tikzcd}
\] 
     is equivalence of $\infty$-categories.

\medskip

    \item The functor 
    \[
\begin{tikzcd}[column sep=4pc,row sep=5pc]
\mathsf{Cell}~\C^*\text{-}\mathsf{alg}
\arrow[r,"\mathbb{L}\mathrm{Ab}"{name=TD}]
&\mathsf{Cell}~\C\text{-}\mathsf{alg}~.
\end{tikzcd}
	\] 
	is fully faithful.

\medskip

    \item For every strictly cellular dg $\C^\ast$-algebra $A$, the unit map of the adjunction $\mathbb{L}\mathrm{Ab} \dashv \Res$ is a quasi-isomorphism.
\end{enumerate}
\end{theorem}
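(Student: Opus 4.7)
The plan is to prove the equivalences (1) $\Leftrightarrow$ (2) and (2) $\Leftrightarrow$ (3) separately, exploiting the presentation results established in the previous subsection together with the standard $\infty$-categorical yoga of adjunctions. By construction, the adjunction $\Phi \dashv \Psi$ is obtained by pre- and post-composing the presheaf adjunction $\mathbb{L}\mathrm{Ab}_! \dashv \mathbb{L}\mathrm{Ab}^*$ with the equivalences coming from Theorem \ref{thm: présentation des C^*-algebres} and from Proposition \ref{prop: FMP simeq Functors from cellular C-algebras} (together with Theorem \ref{thm: coalgebraic formal moduli problems}). In particular, $\Phi \dashv \Psi$ is an equivalence if and only if $\mathbb{L}\mathrm{Ab}_! \dashv \mathbb{L}\mathrm{Ab}^*$ is one, so the entire first equivalence takes place at the level of cocartesian presheaves.

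For the implication (2) $\Rightarrow$ (1), I would invoke Lemma \ref{lemma: cellular equiv = quasi-iso somme produit sur les parfaits2}: once $\mathbb{L}\mathrm{Ab}$ is fully faithful on cellular algebras it is automatically an equivalence, and any equivalence between small $\infty$-categories induces equivalences between their $\infty$-categories of (cocartesian) presheaves. For the converse, I would use the universal property of $\mathbb{L}\mathrm{Ab}_!$ as the left Kan extension of $y_{\C} \circ \mathbb{L}\mathrm{Ab}$ along the Yoneda embedding $y_{\C^*}: \mathsf{Cell}~\C^*\text{-}\mathsf{alg} \hookrightarrow \Psh^\lrcorner(\mathsf{Cell}~\C^*\text{-}\mathsf{alg})$. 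If $\Phi \dashv \Psi$ is an equivalence, then $\mathbb{L}\mathrm{Ab}_!$ is fully faithful; restricting to representables and using that both $y_{\C^*}$ and $y_{\C}$ are fully faithful (the Yoneda embedding into cocartesian presheaves being the composition of the ordinary Yoneda with the fully faithful inclusion of cocartesian presheaves into all presheaves) forces $\mathbb{L}\mathrm{Ab}$ on cellular objects to be fully faithful as well.

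For the equivalence (2) $\Leftrightarrow$ (3), I would apply the standard characterisation that a left adjoint $\infty$-functor is fully faithful on a full sub-$\infty$-category exactly when the unit of the adjunction is a natural equivalence on that sub-$\infty$-category. Applied to the derived adjunction $\mathbb{L}\mathrm{Ab} \dashv \mathrm{Res}$ restricted to cellular $\C^*$-algebras, this reduces (2) to the condition that the unit $A \longrightarrow \mathrm{Res}~\mathbb{L}\mathrm{Ab}(A)$ is a quasi-isomorphism for every cellular $A$. Since by Proposition \ref{prop: charactérisation des C*-cellulaires} every such $A$ is quasi-isomorphic to a strictly cellular one, and since strictly cellular $\C^*$-algebras are cofibrant so that $\mathbb{L}\mathrm{Ab}(A) = \mathrm{Ab}(A)$, the condition reduces to the point-set statement in (3).

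The main obstacle lies in the implication (1) $\Rightarrow$ (2): while it is standard that an equivalence at the level of presheaves descends to an equivalence at the base, the restriction to cocartesian presheaves demands that one carefully check the interaction of the Yoneda embedding with the reflective inclusion $\Psh^\lrcorner \hookrightarrow \Psh$, and that cellular algebras really do form the essential image of a fully faithful embedding into cocartesian presheaves. Once this bookkeeping is in place, the rest of the argument is a formal exercise built upon the presentation results of the preceding sections.
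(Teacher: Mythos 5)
Your overall architecture matches the paper's: $(2)\Rightarrow(1)$ via Lemma \ref{lemma: cellular equiv = quasi-iso somme produit sur les parfaits2} and functoriality of cocartesian presheaves, $(1)\Rightarrow(2)$ by restricting the presheaf-level equivalence to representables, and $(3)\Leftrightarrow(2)$ through the unit of $\mathbb{L}\mathrm{Ab}\dashv\Res$ together with Proposition \ref{prop: charactérisation des C*-cellulaires}. However, the ``standard characterisation'' you invoke for $(2)\Leftrightarrow(3)$ is not correct as stated. For a full sub-$\infty$-category $\mathcal{C}_0 \subseteq \mathcal{C}$ and an adjunction $L \dashv R$, fully faithfulness of $L$ on $\mathcal{C}_0$ is equivalent to saying that for every $Y$ in $\mathcal{C}_0$ the unit $\eta_Y \colon Y \to RLY$ becomes an equivalence after applying $\Map{\mathcal{C}}{X}{-}$ for every $X$ in $\mathcal{C}_0$; it does \emph{not} in general imply that $\eta_Y$ itself is an equivalence (only the converse implication is formal). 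The direction you need for $(2)\Rightarrow(3)$ is precisely the problematic one; the direction giving $(3)\Rightarrow(2)$ is fine, as is your reduction to strictly cellular (hence cofibrant) algebras so that $\mathbb{L}\mathrm{Ab}$ is computed by $\mathrm{Ab}$.

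The gap is fixable with material already at hand, and this is exactly the extra step the paper supplies. By Theorem \ref{thm: présentation des C^*-algebres} every dg $\C^*$-algebra is a colimit of cellular ones, so the corepresentable functors $\Map{}{\mathfrak{g}'}{-}$ with $\mathfrak{g}'$ cellular jointly detect equivalences; consequently, a map of dg $\C^*$-algebras that is local with respect to every cellular $\mathfrak{g}'$ is an equivalence, and the unit map $\mathfrak{g} \to \Res~\mathbb{L}\mathrm{Ab}(\mathfrak{g})$ is indeed a quasi-isomorphism for every cellular $\mathfrak{g}$. With that sentence added your argument closes; the remaining bookkeeping you flag in the last paragraph (compatibility of $\mathbb{L}\mathrm{Ab}_!$ with the Yoneda embeddings, which land in cocartesian presheaves since corepresentables send the relevant pushouts to pullbacks) is handled as in the paper.
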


\begin{proof}
The implication $(3) \Rightarrow (2)$ follows from Proposition \ref{prop: charactérisation des C*-cellulaires}. The implication $(2) \Rightarrow (1)$ follows from Lemma \ref{lemma: cellular equiv = quasi-iso somme produit sur les parfaits2}.

\medskip

The implication $(1) \Rightarrow (2)$ is a consequence of the fact that cellular $\C$-algebras and cellular $\C^\ast$-algebras are full subcategories of, respectively, formal moduli problems over $\Omega\C$-algebras and $\C^*$-algebras.

\medskip

Finally, let us suppose $(2)$. Let $\mathfrak{g},\mathfrak{g}'$ be two cellular $\C^\ast$ algebra. Without any loss of generality, we can suppose that $\mathfrak{g}$ is strictly cellular by Proposition \ref{prop: charactérisation des C*-cellulaires}. The map
\[
\Map{\mathsf{dg}~\C^*\text{-}\mathsf{alg}~[\mathrm{Q.iso}^{-1}]}{\mathfrak{g}'}{\mathfrak{g}} \simeq \Map{\mathsf{dg}~\C^*\text{-}\mathsf{alg}~[\mathrm{Q.iso}^{-1}]}{\mathfrak{g}'}{ \Res ~ \Ab (\mathfrak{g})} 
\]
is a equivalence since $\mathbb{L}\mathrm{Ab}$ is fully faithful. This means that the derived unit map 
\[
\mathbb{L}\eta_\mathfrak{g}: \mathfrak{g} \longrightarrow \Res ~ \mathbb{L}\mathrm{Ab} (\mathfrak{g})
\]
is local with respect to every cellular algebra $\mathfrak{g}'$. In turn, this implies that it is a weak-equivalence, since every dg $\C^\ast$ algebra is a colimit of cellular algebras.
\end{proof}   

\subsection{Formal moduli problem associated to an algebra}
For the rest of this subsection, we will suppose that there is an equivalence between the $\infty$-category of formal moduli problems of $\Omega \C$-algebras and the $\infty$-category of dg $\C^*$-algebras. This will for instance be the case if $\C$ is \textit{tempered}, as explained in Section \ref{section: tempered cooperads}. Our goal here of this subsection is to describe $\Psi$ for different kinds of dg $\C^*$-algebras.

\begin{lemma}
The functor which associates to a dg $\C^*$-algebra the formal moduli problem it encodes is given by 
\[
\begin{tikzcd}[column sep=3.5pc,row sep=0.5pc]
\Psi: \mathsf{dg}~\C^*\text{-}\mathsf{alg}~[\mathrm{Q.iso}^{-1}] \arrow[r,]
& \Psh^\lrcorner\left(\mathsf{coArt}~\Omega \C\text{-}\mathsf{coalg}\right) \\
\mathfrak{g} \arrow[r,mapsto]
& \mathrm{Map}_{\mathsf{dg}~\C^*\text{-}\mathsf{alg}[\mathrm{Q.iso}^{-1}]}(\Res~\widehat{\Omega}_\iota (-),\mathfrak{g})~,
\end{tikzcd}
\]

at the $\infty$-categorical level.
\end{lemma}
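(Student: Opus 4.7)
The plan is to unwind the chain of $\infty$-categorical equivalences used to construct $\Psi$, and then invoke the standing assumption that the canonical adjunction is an equivalence. By Theorem \ref{thm: présentation des C^*-algebres}, one has the equivalence $\mathrm{Y}: \mathsf{dg}~\C^*\text{-}\mathsf{alg}~[\mathrm{Q.iso}^{-1}] \simeq \Psh^\lrcorner(\mathsf{Cell}~\C^*\text{-}\mathsf{alg})$ sending $\mathfrak{g}$ to its Yoneda nerve $\mathrm{Map}_{\C^*\text{-}\mathsf{alg}}(-,\mathfrak{g})$. By Proposition \ref{prop: FMP simeq Functors from cellular C-algebras}, which itself encodes the complete bar-cobar equivalence, cocartesian presheaves on cellular $\C$-algebras are equivalent to formal moduli problems over $\Omega\C$-algebras, with the evaluation of a presheaf at $\widehat{\Omega}_\iota(C)$ corresponding to the evaluation of the associated formal moduli problem at $C$. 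Under these two identifications, $\Psi$ corresponds to the functor $\mathbb{L}\mathrm{Ab}_!$ on presheaves, and for every $\mathfrak{g}$ and every coArtinian $\Omega\C$-coalgebra $C$, we have
\[
\Psi(\mathfrak{g})(C) \simeq \mathbb{L}\mathrm{Ab}_!(\mathrm{Y}(\mathfrak{g}))\left(\widehat{\Omega}_\iota(C)\right).
\]

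The key step is to invoke the standing assumption that the canonical adjunction is an equivalence. By Theorem \ref{Thm: the canonical adjunction} combined with Lemma \ref{lemma: cellular equiv = quasi-iso somme produit sur les parfaits2}, the functor $\mathbb{L}\mathrm{Ab}: \mathsf{Cell}~\C^*\text{-}\mathsf{alg} \to \mathsf{Cell}~\C\text{-}\mathsf{alg}$ is then an equivalence of $\infty$-categories; condition (3) of the same theorem, together with the adjunction $\mathbb{L}\mathrm{Ab} \dashv \mathrm{Res}$, identifies the quasi-inverse of $\mathbb{L}\mathrm{Ab}$ on cellular objects with the restriction of $\mathrm{Res}$. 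Consequently, the induced adjunction $\mathbb{L}\mathrm{Ab}_! \dashv \mathbb{L}\mathrm{Ab}^*$ between the presheaf $\infty$-categories is itself an equivalence, and the precomposition functor $\mathbb{L}\mathrm{Ab}^*$ is a quasi-inverse of $\mathbb{L}\mathrm{Ab}_!$. From this, I would deduce the pointwise natural equivalence
\[
\mathbb{L}\mathrm{Ab}_!(F)(\mathfrak{h}) \simeq F\left(\mathrm{Res}(\mathfrak{h})\right)
\]
for every $F$ in $\Psh^\lrcorner(\mathsf{Cell}~\C^*\text{-}\mathsf{alg})$ and every $\mathfrak{h}$ in $\mathsf{Cell}~\C\text{-}\mathsf{alg}$.

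Applying this to $F = \mathrm{Y}(\mathfrak{g})$ and $\mathfrak{h} = \widehat{\Omega}_\iota(C)$ yields
\[
\Psi(\mathfrak{g})(C) \simeq \mathrm{Y}(\mathfrak{g})\left(\mathrm{Res}~\widehat{\Omega}_\iota(C)\right) = \mathrm{Map}_{\mathsf{dg}~\C^*\text{-}\mathsf{alg}~[\mathrm{Q.iso}^{-1}]}\left(\mathrm{Res}~\widehat{\Omega}_\iota(C),\, \mathfrak{g}\right),
\]
where the last equality is the defining formula of the Yoneda nerve. Naturality in both $\mathfrak{g}$ and $C$ is inherited from the naturality of all the identifications. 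The main obstacle is to verify cleanly that the quasi-inverse of $\mathbb{L}\mathrm{Ab}$ on cellular objects is genuinely given by $\mathrm{Res}$, which relies essentially on condition (3) of Theorem \ref{Thm: the canonical adjunction}; once this is in place, the reduction of $\Psi(\mathfrak{g})(C)$ to the mapping space on the right-hand side becomes essentially formal.
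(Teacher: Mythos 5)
Your proof is correct and is essentially the paper's argument carried out in detail: the paper's proof consists of noting that $\mathbb{L}\mathrm{Ab}$ being an equivalence on cellular objects forces $\Res$ to be its quasi-inverse there (so that the displayed formula defines a cocartesian presheaf) and then declaring the identification of $\Psi$ with that formula to be ``direct inspection''. Your unwinding of $\Psi$ through $\mathrm{Y}$, $\mathbb{L}\mathrm{Ab}_!$ and the complete bar--cobar equivalence, using that left Kan extension along an equivalence is precomposition with its inverse, is precisely that inspection.
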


\begin{proof}
Since $\mathbb{L}\mathrm{Ab}$ is an equivalence between cellular objects, $\Res$ preserves pushouts of cellular $\C$-algebras. So $\Psi(\mathfrak{g})$ is indeed a formal moduli problem. The result follows from direct inspection.
\end{proof}

\begin{remark}
We chose to describe the formal moduli problem $\Psi(\mathfrak{g})$ using the test category of coArtinian $\Omega \C$-coalgebras as it is more natural to do so. A description in terms of Artinian $\Omega \C$-algebras can by obtained directly by pre-composing $\Psi(\mathfrak{g})$ with the derived Sweedler dual functor $\mathbb{R}(-)^\circ$.
\end{remark}

\begin{theorem}\label{thm: rectification of fmps}
Let $\mathfrak{g}$ be a dg $\C^*$-algebra. The following assertions are equivalent. 

\medskip

\begin{enumerate}
\item The algebra $\mathfrak{g}$ is in the homotopical essential image of the functor 
\[
\begin{tikzcd}[column sep=5pc,row sep=5pc]
\mathsf{dg}~\mathcal{C}\text{-}\mathsf{alg}^{\mathsf{qp}\text{-}\mathsf{comp}}~[\mathrm{W}^{-1}] \arrow[r,"\mathrm{Res}"{name=LDC}] 
&\mathsf{dg}~\mathcal{C}^*\text{-}\mathsf{alg}~[\mathrm{Q.iso}^{-1}]~.
\end{tikzcd}
\] 

\item The formal moduli problem $\Psi(\mathfrak{g})$ is \textit{corepresentable} by a dg $\Omega \C$-coalgebra $V_\mathfrak{g}$, meaning it is given by 
\[
\begin{tikzcd}[column sep=2pc,row sep=0.5pc]
\Psi(\mathfrak{g}): \mathsf{coArt}~\Omega \C\text{-}\mathsf{coalg} \arrow[r,]
&\mathcal{S} \\
C \arrow[r,mapsto]
&\mathrm{Map}_{\mathsf{dg}~\Omega\C\text{-}\mathsf{coalg}[\mathrm{Q.iso}^{-1}]}(C,V_\mathfrak{g})~.
\end{tikzcd}
\]
\end{enumerate}
\end{theorem}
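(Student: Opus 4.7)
The strategy is to compute $\Psi(\mathrm{Res}(A))$ explicitly, identify it with the presheaf corepresented by $\widehat{\mathrm{B}}_\iota(A)$, and then invert this computation using the standing hypothesis that $\Phi \dashv \Psi$ is an equivalence (equivalently, by Theorem \ref{Thm: the canonical adjunction}, that $\mathbb{L}\mathrm{Ab}$ restricts to an equivalence between cellular $\C^\ast$-algebras and cellular $\C$-algebras).

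\textbf{Direction $(1) \Rightarrow (2)$.} Assume $\mathfrak{g} \simeq \mathrm{Res}(A)$ for some qp-complete dg $\C$-algebra $A$. For any coArtinian $\Omega\C$-coalgebra $C$, the derived cobar $\widehat{\Omega}_\iota(C)$ is a cellular $\C$-algebra. Using the adjunction $\mathbb{L}\mathrm{Ab} \dashv \mathrm{Res}$, we rewrite
\[
\Psi(\mathfrak{g})(C) = \Map{\C^\ast\text{-}\mathsf{alg}}{\mathrm{Res}~\widehat{\Omega}_\iota(C)}{\mathrm{Res}(A)} \simeq \Map{\C\text{-}\mathsf{alg}}{\mathbb{L}\mathrm{Ab}~\mathrm{Res}~\widehat{\Omega}_\iota(C)}{A}.
\]
By our standing hypothesis, $\mathbb{L}\mathrm{Ab}$ is an equivalence on cellular objects with inverse $\mathrm{Res}$, so the counit $\mathbb{L}\mathrm{Ab}~\mathrm{Res}~\widehat{\Omega}_\iota(C) \simeq \widehat{\Omega}_\iota(C)$ is a weak equivalence. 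Applying the Quillen equivalence $\widehat{\Omega}_\iota \dashv \widehat{\mathrm{B}}_\iota$ we obtain
\[
\Psi(\mathfrak{g})(C) \simeq \Map{\C\text{-}\mathsf{alg}}{\widehat{\Omega}_\iota(C)}{A} \simeq \Map{\Omega\C\text{-}\mathsf{coalg}}{C}{\widehat{\mathrm{B}}_\iota(A)},
\]
which exhibits $\Psi(\mathfrak{g})$ as corepresented by $V_\mathfrak{g} := \widehat{\mathrm{B}}_\iota(A)$.

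\textbf{Direction $(2) \Rightarrow (1)$.} Suppose $\Psi(\mathfrak{g})$ is corepresentable by a dg $\Omega\C$-coalgebra $V_\mathfrak{g}$, and set $A := \widehat{\Omega}_\iota(V_\mathfrak{g})$, a qp-complete dg $\C$-algebra. Applying the previous direction to $\mathrm{Res}(A)$ and using that the derived unit $V_\mathfrak{g} \simeq \widehat{\mathrm{B}}_\iota\widehat{\Omega}_\iota(V_\mathfrak{g})$ is a weak equivalence (by the Quillen equivalence), we compute, for every coArtinian $C$,
\[
\Psi(\mathrm{Res}(A))(C) \simeq \Map{\Omega\C\text{-}\mathsf{coalg}}{C}{\widehat{\mathrm{B}}_\iota(A)} \simeq \Map{\Omega\C\text{-}\mathsf{coalg}}{C}{V_\mathfrak{g}} \simeq \Psi(\mathfrak{g})(C).
\]
Thus $\Psi(\mathrm{Res}(A)) \simeq \Psi(\mathfrak{g})$ in $\mathsf{FMP}_{\Omega\C}$, and since $\Psi$ is an equivalence of $\infty$-categories by assumption, we conclude $\mathfrak{g} \simeq \mathrm{Res}(A)$.

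\textbf{Main obstacle.} The only non-formal ingredient is the identification $\mathbb{L}\mathrm{Ab}~\mathrm{Res}~\widehat{\Omega}_\iota(C) \simeq \widehat{\Omega}_\iota(C)$ for coArtinian $C$; this is precisely the content of the standing hypothesis that $\mathbb{L}\mathrm{Ab}$ is fully faithful on cellular $\C^\ast$-algebras, promoted through Lemma \ref{lemma: cellular equiv = quasi-iso somme produit sur les parfaits2} to an equivalence on cellular objects. Everything else is a formal consequence of the two adjunctions $\mathbb{L}\mathrm{Ab} \dashv \mathrm{Res}$ and $\widehat{\Omega}_\iota \dashv \widehat{\mathrm{B}}_\iota$.
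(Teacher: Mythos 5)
Your proposal is correct and takes essentially the same route as the paper: the paper's proof consists of asserting the commutativity of a diagram of Yoneda functors whose left square is exactly your use of the $\mathbb{L}\mathrm{Ab} \dashv \mathrm{Res}$ adjunction together with the fact that $\mathbb{L}\mathrm{Ab}$ is an equivalence on cellular objects, and whose right square is the complete bar-cobar Quillen equivalence $\widehat{\Omega}_\iota \dashv \widehat{\mathrm{B}}_\iota$. Your argument is simply an explicit unpacking of that diagram chase in both directions.
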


\begin{proof}
Follows from the commutativity of the following diagram of functors
\[
    \begin{tikzcd}[column sep=2pc,row sep=3pc]
    \Psh^\lrcorner(\mathsf{Cell}~\C^*\text{-}\mathsf{alg})
    \ar[r, "\mathbb{L}\mathrm{Ab}_!"]
    &\Psh^\lrcorner(\mathsf{Cell}~\C\text{-}\mathsf{alg}) \ar[r, "\widehat{\mathrm{B}}_{\iota,!}"]
    & \FMP^c_{\Omega \C} 
    \\
    \mathsf{dg}~\C^*\text{-}\mathsf{alg}~[\mathrm{Q.iso}^{-1}]
    \ar[u, "\mathrm{Y}"] 
    &\mathsf{dg}~\mathcal{C}\text{-}\mathsf{alg}^{\mathsf{qp}\text{-}\mathsf{comp}}~[\mathrm{W}^{-1}] \ar[u, "\mathrm{Y}"] \ar[l, "\Res",swap]
    & \catdgcog{\Omega\C}~\invqis~.
 	 \ar[u, "\mathrm{Y}"] \ar[l, "\widehat{\Omega}_{\iota}",swap]
\end{tikzcd}
\]
\end{proof}

\begin{definition}[Homotopy complete algebra]
Let $\mathfrak{g}$ be a dg $\C^*$-algebra. It is \textit{homotopy complete} if the derived unit of adjunction
\[
\mathbb{L} \eta_{\mathfrak{g}}: \mathfrak{g} \qi \mathrm{Res}~\mathbb{L}\mathrm{Ab}~\mathfrak{g}
\]

is a quasi-isomorphism.
\end{definition}

\begin{remark}
The definition of \textit{homotopy completeness} is analogous to the one of Harper and Hess in \cite{HarperHess}. Over a characteristic zero field, homotopy complete algebras are also the convergence radius of the Goodwillie tower of the identity functor in the category of dg $\C^*$-algebras. One can then reinterpret the condition $(2)$ in Theorem \ref{Thm: the canonical adjunction} as: cellular $\C^*$-algebras are homotopy complete. 

\medskip

The notion of homotopy completeness is deeply linked to \textit{nilpotency}. For example, nilpotent dg Lie algebras in non-negative degrees are homotopy complete as it was shown in \cite[Section 5]{campos2020lie}. The other way around, if a quasi-free dg Lie algebra generated in non-negative degrees is homotopy complete, its homology is a pro-nilpotent graded Lie algebra, see \cite{nilpotentLiestuff}.  
\end{remark}

\begin{corollary}
Let $\mathfrak{g}$ be a homotopy complete dg $\C^*$-algebra. The formal moduli problem $\Psi(\mathfrak{g})$ associated to $\mathfrak{g}$ is given by the functor

\[
\begin{tikzcd}[column sep=1.5pc,row sep=0pc]
\Psi(\mathfrak{g}): \mathsf{coArt}~\Omega \C\text{-}\mathsf{coalg}  \arrow[r]
&\mathcal{S} \\
C \arrow[r,mapsto]
&\mathrm{Map}_{\mathsf{dg}~\C\text{-}\mathsf{alg}~[\mathrm{W}^{-1}]}\left(\widehat{\Omega}_\iota C,\mathbb{L}\mathrm{Ab}(\mathfrak{g})\right)~.
\end{tikzcd}
\] 
\end{corollary}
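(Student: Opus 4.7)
The plan is to start from the formula
\[
\Psi(\mathfrak g)(C)\simeq \mathrm{Map}_{\mathsf{dg}~\C^\ast\text{-}\mathsf{alg}[\mathrm{Q.iso}^{-1}]}\bigl(\mathrm{Res}\,\widehat\Omega_\iota(C),\mathfrak g\bigr)
\]
provided by the preceding lemma and to transform its right-hand side in three steps, using in order homotopy completeness of $\mathfrak g$, the derived adjunction $\mathbb{L}\mathrm{Ab}\dashv\mathrm{Res}$ coming from Lemma~\ref{lemma: adjunction Ab - Res}, and the equivalence $\mathbb{L}\mathrm{Ab}\colon\mathsf{Cell}~\C^\ast\text{-}\mathsf{alg}\xrightarrow{\sim}\mathsf{Cell}~\C\text{-}\mathsf{alg}$ granted by the standing hypothesis of this subsection via Theorem~\ref{Thm: the canonical adjunction}.

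First, I would invoke homotopy completeness of $\mathfrak g$ to substitute $\mathrm{Res}\,\mathbb{L}\mathrm{Ab}(\mathfrak g)$ for $\mathfrak g$ as the target of the mapping space, and then apply the derived adjunction $\mathbb{L}\mathrm{Ab}\dashv\mathrm{Res}$ at the $\infty$-categorical level, yielding
\[
\Psi(\mathfrak g)(C)\simeq \mathrm{Map}_{\mathsf{dg}~\C\text{-}\mathsf{alg}[\mathrm{W}^{-1}]}\bigl(\mathbb{L}\mathrm{Ab}\,\mathrm{Res}\,\widehat\Omega_\iota(C),\mathbb{L}\mathrm{Ab}(\mathfrak g)\bigr).
\]

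The remaining, and main, step will be to verify that the counit map $\mathbb{L}\mathrm{Ab}\,\mathrm{Res}\,\widehat\Omega_\iota(C)\to\widehat\Omega_\iota(C)$ is a weak-equivalence of dg $\C$-algebras. Since $C$ is coArtinian, $\widehat\Omega_\iota(C)$ lies in $\mathsf{Cell}~\C\text{-}\mathsf{alg}$, so under the standing hypothesis it will be of the form $\mathbb{L}\mathrm{Ab}(\mathfrak h)$ for some cellular $\C^\ast$-algebra $\mathfrak h$. The unit map $\mathfrak h\qi\mathrm{Res}\,\mathbb{L}\mathrm{Ab}(\mathfrak h)\simeq\mathrm{Res}\,\widehat\Omega_\iota(C)$ will then be a quasi-isomorphism by the homotopy completeness of cellular $\C^\ast$-algebras, which itself follows from Theorem~\ref{Thm: the canonical adjunction}. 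Applying $\mathbb{L}\mathrm{Ab}$ to this quasi-isomorphism will yield $\widehat\Omega_\iota(C)\simeq\mathbb{L}\mathrm{Ab}(\mathfrak h)\simeq\mathbb{L}\mathrm{Ab}\,\mathrm{Res}\,\widehat\Omega_\iota(C)$, completing the chain of equivalences and proving the corollary.
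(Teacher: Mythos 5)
Your argument is correct, but it takes a different route from the paper's. The paper first uses homotopy completeness to place $\mathfrak{g}$ in the essential image of $\Res$, then invokes Theorem~\ref{thm: rectification of fmps} to corepresent $\Psi(\mathfrak{g})$ by the coalgebra $V_{\mathfrak{g}} = \widehat{\mathrm{B}}_\iota\,\mathbb{L}\mathrm{Ab}(\mathfrak{g})$, and finally converts the coalgebra mapping space $\mathrm{Map}(C, V_{\mathfrak{g}})$ into $\mathrm{Map}\bigl(\widehat{\Omega}_\iota C, \mathbb{L}\mathrm{Ab}(\mathfrak{g})\bigr)$ using that the complete bar-cobar adjunction is a Quillen equivalence. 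You instead stay on the algebra side throughout: after substituting $\Res\,\mathbb{L}\mathrm{Ab}(\mathfrak{g})$ for $\mathfrak{g}$ and applying the derived adjunction $\mathbb{L}\mathrm{Ab}\dashv\Res$, you are left with showing that the counit $\mathbb{L}\mathrm{Ab}\,\Res\,\widehat{\Omega}_\iota(C)\to\widehat{\Omega}_\iota(C)$ is an equivalence, which you deduce from the cellularity of $\widehat{\Omega}_\iota(C)$ for coArtinian $C$ together with the standing hypothesis (via Theorem~\ref{Thm: the canonical adjunction} and Lemma~\ref{lemma: cellular equiv = quasi-iso somme produit sur les parfaits2}, $\mathbb{L}\mathrm{Ab}$ is an equivalence onto cellular $\C$-algebras and cellular $\C^\ast$-algebras are homotopy complete). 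This is exactly where the hypothesis of the subsection enters in your version, whereas in the paper it is absorbed into the commutative diagram proving Theorem~\ref{thm: rectification of fmps}; your approach is slightly more self-contained at the cost of this extra counit verification. One small point worth making explicit: applying $\mathbb{L}\mathrm{Ab}$ to the unit equivalence $\mathfrak{h}\qi\Res\,\mathbb{L}\mathrm{Ab}(\mathfrak{h})$ only gives an abstract equivalence $\mathbb{L}\mathrm{Ab}(\mathfrak{h})\simeq\mathbb{L}\mathrm{Ab}\,\Res\,\mathbb{L}\mathrm{Ab}(\mathfrak{h})$; you should invoke the triangle identity $\epsilon_{\mathbb{L}\mathrm{Ab}(\mathfrak{h})}\circ\mathbb{L}\mathrm{Ab}(\eta_{\mathfrak{h}})\simeq\mathrm{id}$ to conclude that the \emph{counit itself} is an equivalence, which is what is needed to identify the natural transformation of functors in $C$ and not merely their values.
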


\begin{proof}
We have that $\mathfrak{g} \simeq \mathrm{Res}~\mathbb{L}\mathrm{Ab}(\mathfrak{g})$ since it is homotopy complete. Furthermore, $\mathbb{L}\mathrm{Ab}(\mathfrak{g})$ is always weakly-equivalent to the complete cobar construction of a dg $\Omega \C$-coalgebra since the complete bar-cobar adjunction is a Quillen equivalence.
\end{proof}

The bright point about Theorem \ref{thm: rectification of fmps} is that it is possible to give explicit models for the mapping spaces that describe the formal moduli problem $\Psi(\mathfrak{g})$ when it is corepresentable by a dg $\Omega \C$-coalgebra. The simplicial sets that model these mapping spaces will have as $0$-simplices solutions to a Maurer-Cartan equation, and the higher simplicies will be given by combinatorial formulae. In particular, we obtain that such models exist for any formal moduli problem encoded by a \textit{homotopy complete} dg $\C^*$-algebra. This is related to the various (pro)-nilpotent conditions that are present in integration theory in the literature, see for instance \cite{Getzler09}. 

\medskip

Let us describe how to construct such models. Since $\C$ is quasi-planar, the dg operad $\Omega\C$ admits a canonical $\mathcal{E}$-comodule structure, where $\mathcal{E}$ is the Barratt-Eccles dg operad, see \cite[Section 2.8]{premierpapier}. This implies that given a dg $\Omega\C$-coalgebra $C$ and a dg $\C$-algebra $A$, there exists a \textit{convolution} curved $\mathrm{B}\mathcal{E}$-algebra structure on the graded module of graded maps $\mathrm{hom}(C,A)$, given by explicit formulas related to the map $\Omega \C \longrightarrow \Omega \C \otimes \mathcal{E}$. We refer to \cite{grignou2022mapping} for the general theory of mapping coalgebras and convolution algebras.

\medskip

A curved $\mathrm{B}\mathcal{E}$-algebra can also be called a curved absolute partition $\mathcal{L}_\infty$-algebras. Using the integration functor $\mathcal{R}$ for curved absolute partition $\mathcal{L}_\infty$-algebras constructed in \cite{lietheoryp}, one can get a rectification of the functor $\Psi(\mathfrak{g})$. It is induced by the following functor 

\[
\begin{tikzcd}[column sep=1.5pc,row sep=0.5pc]
\psi(\mathfrak{g}): \mathsf{dg}~\Omega\C\text{-}\mathsf{coalg} \arrow[r]
&\mathsf{sSets} \\
C \arrow[r,mapsto]
&\mathcal{R}(\mathrm{hom}(C ,\mathbb{L}\mathrm{Ab}(\mathfrak{g})))
~, 
\end{tikzcd}
\]

when restricted to models for coArtinian $\Omega\C$-coalgebras. Furthermore, the assignment is functorial in homotopy complete dg $\C^*$-algebras: any quasi-isomorphisms induces a natural weak-equivalence of functors. See also \cite[Section 2.5]{lietheoryp}.



\section{Reduced and tempered cooperads}\label{section: tempered cooperads}
For the rest of this section let $\C$ be a $0$-\textit{reduced} quasi-planar conilpotent dg cooperads. By $0$-reduced, we mean that $\C(0) = 0.$ We give a powerful criterion that turns the functor relating the $\infty$-categories of cellular $\C$-algebras and cellular $\C^*$-algebras an equivalence. In its essence, it holds if the homology of $\C$ is concentrated in increasingly positive homological degrees. The reason behind is that cellular $\C$-algebras and cellular $\C^*$-algebras are equivalent if a graded direct sum is equivalent to a graded direct product, which will be true if there are finitely many non-trivial elements in each homological degree.

\subsection{Reduction to the free case for zero reduced cooperads}
Recall that by Theorem \ref{Thm: the canonical adjunction}, in order to have an equivalence of $\infty$-categories between formal moduli problems over $\Omega\C$-algebras and dg $\C^*$-algebras, it is enough to check that the (derived) unit of the adjunction $\mathbb{L}\mathrm{Ab} \dashv \mathrm{Res}$ is a quasi-isomorphism for every strictly cellular dg $\C^*$-algebra. When $\C$ is zero-reduced, it is in fact possible to reduce even further to the case of free dg $\C^*$-algebras.

\begin{proposition}\label{propositionzeroreducedcase}
As the conilpotent dg cooperad $\C$ is quasi-planar and $0$-reduced, the following assertions are equivalent.
    \begin{enumerate}
        \item For every strictly cellular dg $\C^\ast$-algebra the unit of the $\mathrm{Ab} \dashv \mathrm{Res}$ adjunction is a quasi-isomorphism.
        
        \item For every free dg $\C^*$-algebra generated by a finite dimensional dg module in non-positve degrees, the unit of the $\mathrm{Ab} \dashv \mathrm{Res}$ adjunction is a quasi-isomorphism.
     \end{enumerate}
\end{proposition}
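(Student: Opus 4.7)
For $(1) \Rightarrow (2)$, I would observe that any free algebra $\C^* \circ V$ on a finite-dimensional dg module $V$ concentrated in non-positive degrees is itself strictly cellular, via the trivial one-step filtration $V(-1) = 0 \subseteq V(0) = V$ with $d_2 = 0$: the condition $d_2(V(0)) \subseteq \C^* \circ V(-1) = 0$ is automatic since $\C(0) = 0$. Hence $(2)$ is a special case of $(1)$.

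For the substantive direction $(2) \Rightarrow (1)$, the plan is to compare the unit $\eta_A : A \to \hat A := \mathrm{Res}\,\mathrm{Ab}(A)$ via an arity filtration and a spectral sequence argument. For a strictly cellular $A = (\C^* \circ V, d_V + d_2)$, $0$-reducedness of $\C$ implies that $d_2$ never decreases arity, so one decomposes $d_2 = d_2^{(=)} + d_2^{(>)}$ into arity-preserving and strictly arity-increasing parts. The arity-preserving piece $d_2^{(=)}|_V$ takes values in $\C^*(1) \otimes V$, and (assuming $\C(1) = \kk$) absorbs into a modified differential $\tilde d_V := d_V + d_2^{(=)}|_V$ on $V$. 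The squaring condition $(d_V + d_2)^2 = 0$ forces $\tilde d_V^2 = 0$, so $(V, \tilde d_V)$ remains a finite-dimensional dg module in non-positive degrees, and hypothesis $(2)$ applies to it.

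I would then filter both $A$ and $\hat A$ by arity, setting $F^{\geq m} A := \bigoplus_{n \geq m} (\C^*(n) \otimes V^{\otimes n})^{\mathbb{S}_n}$ and $F^{\geq m} \hat A := \prod_{n \geq m} (\C^*(n) \otimes V^{\otimes n})^{\mathbb{S}_n}$. Both differentials preserve this descending filtration and $\eta_A$ is filtration-compatible. The associated gradeds $\gr^m$ coincide on both sides as $(\C^*(m) \otimes V^{\otimes m})^{\mathbb{S}_m}$ with induced differential $\tilde d_V$, so the map of $E^1$-pages is the identity. Applying hypothesis $(2)$ to the free algebra $\tilde A_0 := \C^* \circ (V, \tilde d_V)$, which splits as $\bigoplus_m \gr^m$, yields $\bigoplus_m H_*(\gr^m, \tilde d_V) \simeq \prod_m H_*(\gr^m, \tilde d_V)$; in each homological degree, this forces all but finitely many arities to contribute trivially to $E^1$.

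This degree-wise boundedness ensures strong convergence of both spectral sequences, and the comparison theorem transports the isomorphism on $E^1$-pages to the abutments, so $\eta_A$ is a quasi-isomorphism. The main obstacle is the convergence of the spectral sequence on the uncompleted side $A$: its arity filtration is Hausdorff but not complete, so convergence is non-automatic, and the role of hypothesis $(2)$ is precisely to provide the degree-wise boundedness that guarantees it. Should $\C(1)$ be larger than $\kk$, one needs a slightly more involved treatment of $d_2^{(=)}$ via the $\C(1)$-action on $V$, but the structural argument is unchanged.
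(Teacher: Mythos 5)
Your overall strategy coincides with the paper's: both directions hinge on the descending arity filtrations $\mathrm{F}_m\, \C^* \circ V = \bigoplus_{n \geq m}\left(\C^*(n) \otimes V^{\otimes n}\right)^{\mathbb{S}_n}$ and the analogous filtration on the completed side, on the observation that finite-dimensionality of $V$ identifies the two first pages, and on using hypothesis $(2)$ applied to an auxiliary free algebra to force, in each homological degree $k$, the vanishing of the homology of the arity-$n$ graded pieces for all $n$ larger than some $m_k$ --- which is what rescues convergence on the (exhaustive and Hausdorff but incomplete) direct-sum side. The easy implication and this convergence mechanism are exactly as in the paper.

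The gap is in your treatment of the arity-preserving part of $d_2$. The proposition is stated for an arbitrary quasi-planar $0$-reduced conilpotent dg cooperad $\C$, and such a $\C$ need not satisfy $\C(1) = \kk$: for instance $\C = \mathrm{B}(\operad E \otimes \PP)$ with $\PP(1) \neq \kk$ has arity-one part a tensor coalgebra on $s\overline{\PP}(1)$. When $\C^*(1) \neq \kk$, the component $d_2^{(=)} \colon V \to \C^*(1) \otimes V$ cannot be absorbed into a differential $\tilde d_V$ on $V$, so the associated graded of your arity filtration is \emph{not} the arity decomposition of a free $\C^*$-algebra on any dg module, and hypothesis $(2)$ cannot be invoked on it directly. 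This is not a ``slightly more involved treatment'' of the same step; a genuinely separate reduction is required, and it is the one nontrivial ingredient your proposal omits. The paper supplies it via a second, \emph{finite} (hence strongly convergent) spectral sequence: using the cell filtration $0 = V(-1) \subseteq V(0) \subseteq \cdots \subseteq V$ of the strictly cellular structure, the condition $d_2(V(i)) \subseteq \C^* \circ V(i-1)$ forces the arity-$\leq 1$ part $d_2^{\leq 1}$ to vanish on the associated graded of the induced filtration of $\left(\C^*(n) \otimes V^{\otimes n}\right)^{\mathbb{S}_n}$, whence $\mathrm{H}_k\left(\left(\C^*(n)\otimes V^{\otimes n}\right)^{\mathbb{S}_n}, d\right) = 0$ for the free differential implies the same vanishing for $d + d_2^{\leq 1}$, which is what the boundedness argument actually needs. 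With that step added, your argument becomes the paper's proof; without it, it only establishes the proposition under the extra hypothesis $\C(1) = \kk$.
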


\begin{proof}
The first point $(1)$ clearly implies the second point $(2)$. Let $\left(\C^* \circ V, d_{~\C^* \circ V} + d_2 \right)$ be a strictly cellular dg $\C^*$-algebra. We will first reduce to the case where $d_2$ is concentrated in arity one, that is $d_2(V) \subseteq \C^*(1) \otimes V$, before reducing to the case where $d_2$ is trivial, that is, when the stricly cellular algebra is in fact free.

\medskip

We consider the following decreasing exhaustive filtrations
\[
\mathrm{F}_{m}~\C^* \circ V \coloneqq \bigoplus_{n \geq m} \left(\mathcal{C}(n)^* \otimes V^{\otimes n}\right)^{\mathbb{S}_n}~, \quad \quad
\mathrm{F}_{m}~(V)^{\C} \coloneqq \prod_{n \geq m} \mathrm{Hom}\left(\mathcal{C}(n), V^{\otimes n}\right)^{{\mathbb{S}_n}}~,
\]
for all $m \geq 0$. Since $\C$ is $0$-reduced, both filtrations are stable by the differentials. The unit map given by the natural inclusion 
\[
\eta: \bigoplus_{n \geq m} \left(\mathcal{C}(n)^* \otimes V^{\otimes n}\right)^{\mathbb{S}_n} \rightarrowtail \prod_{n \geq m} \mathrm{Hom}\left(\mathcal{C}(n), V^{\otimes n}\right)^{{\mathbb{S}_n}}
\]
preserves these filtrations. Notice that $\eta$ induces an isomorphism at the first page of the induced spectral sequences. Indeed, since $V$ is a finite dimensional dg module, there is an isomorphism 
\[
\left(\mathcal{C}(n)^* \otimes V^{\otimes n}\right)^{\mathbb{S}_n} \cong \mathrm{Hom}\left(\mathcal{C}(n), V^{\otimes n}\right)^{{\mathbb{S}_n}}~.
\] 
So in order to prove that $\eta$ is a quasi-isomorphism, it is enough to show that both spectral sequences strongly converge. The filtration on the infinite product is complete and Hausdorff, therefore it strongly converges.

\medskip

Let us show that the spectral sequence induced by the filtration $\mathrm{F}_{m}~\C^* \circ V$ is bounded at page two. We suppose that the unit of adjunction is a quasi-isomorphism for $(\C^* \circ V, d_{~\C^* \circ V} + d_2^{\leq 1})$, where $d_2^{\leq 1}$ is the differential $d_2$ truncated at arity one. This unit is given in homology by
\[
\bigoplus_{n \geq 0} \mathrm{H}_*\left(\left(\mathcal{C}(n)^* \otimes V^{\otimes n}\right)^{\mathbb{S}_n},d_{~\C^* \circ V} + d_2^{\leq 1}\right) \rightarrowtail \prod_{n \geq 0} \mathrm{H}_*\left(\left(\mathcal{C}(n)^* \otimes V^{\otimes n}\right)^{\mathbb{S}_n},d_{~\C^* \circ V} + d_2^{\leq 1}\right)~.
\]
It is an isomorphism if and only if for every $k$ in $\mathbb{Z}$, there exists a $m_k$ such that 
\[
\mathrm{H}_k\left(\left(\mathcal{C}(n)^* \otimes V^{\otimes n}\right)^{\mathbb{S}_n},d_{~\C^* \circ V} + d_2^{\leq 1}\right) = 0~,
\]
for all $n \geq m_k$. This implies that the spectral sequence induced by the filtration $\mathrm{F}_{m}~\C^* \circ V$ is bounded at page two.

\medskip

We are left to show that if the unit map is a quasi-isomorphism for $(\C^* \circ V, d_{~\C^* \circ V})$, then it implies that is a quasi-isomorphism for $(\C^* \circ V, d_{~\C^* \circ V} + d_2^{\leq 1})$. It suffices to show that if 
\[
\mathrm{H}_k\left(\left(\mathcal{C}(n)^* \otimes V^{\otimes n}\right)^{\mathbb{S}_n},d_{~\C^* \circ V}\right) = 0 \quad \text{then} \quad \mathrm{H}_k\left(\left(\mathcal{C}(n)^* \otimes V^{\otimes n}\right)^{\mathbb{S}_n},d_{~\C^* \circ V} + d_2^{\leq 1}\right) = 0~. 
\]
Recall that $V$ is endowed with an exhaustive \textit{finite} filtration $V(i)$ since $\C^* \circ V$ is strictly cellular. We consider the following increasing filtration 
\[
\mathrm{F}_i\left(\mathcal{C}(n)^* \otimes V^{\otimes n}\right)^{\mathbb{S}_n} \coloneqq \left(\mathcal{C}(n)^* \otimes \left(\sum_{i_1 + \cdots + i_l = i} V(i_1) \otimes \cdots \otimes V(i_l) \right)\right)^{\mathbb{S}_n}~.
\]
The filtration is finite, therefore the spectral sequence associated it is strongly convergent. Using this spectral sequence, it is straightforward to obtain the desired vanishing result for $\mathrm{H}_k$. 
\end{proof}

\subsection{Tempered cooperads}

\begin{definition}[Tempered cooperad]
Let $\C$ be a quasi-planar $0$-reduced conilpotent dg cooperad. It is \textit{tempered} if for all $\kappa \geq 0$, there exist an $n_\kappa$ such that, for all $n \geq n_\kappa$ and $k \leq \kappa$, $\mathrm{H}_k(\C(n)) = 0$. 
\end{definition}

\begin{remark}
Equivalently, for all $\kappa \geq 0$, there exist an $n_\kappa$ such that, for all $n \geq n_\kappa$ and $k \geq \kappa$, $\mathrm{H}_k(\C^\ast(n)) = 0$. Essentially, it means that the homology of $\C$ is concentrated in increasingly bigger degrees as the arity goes to infinity.
\end{remark}

\begin{remark}[Comparison with splendid operads]\label{Rmk: comparison tempered and splendid}
Let $\kk$ be a field of characteristic zero and let $\mathcal{P}$ be a connective \textit{splendid} dg operad in the sense of \cite{CCN19}. One can always transfer an $\infty$-operad structure onto $\mathrm{H}\mathcal{P}$, and there is a weak-equivalence of conilpotent dg cooperads $\mathrm{B}\mathcal{P} \qi \mathrm{B}\mathrm{H}\mathcal{P}$. Here $\mathrm{B}\mathrm{H}\mathcal{P}$ is a tempered conilpotent dg cooperad concentrated in degrees $\geq 1$. This gives a tempered model for any splendid dg operad.

\medskip

On the other hand, if a conilpotent dg cooperad $\C$ is \textit{tempered}, then one can check that the dg operad $\Omega\C$ is splendid. See \cite[Variant 5.18]{CCN19} for more details. Notice that \textit{op.cit.} uses a \textit{cohomological} convention.
\end{remark}

\begin{lemma}\label{lemma: key lemma}
Let $\C$ be a tempered $0$-reduced quasi-planar conilpotent dg cooperad and let $n \geq 0$. Suppose that $\mathrm{H}_*(\C(n))$ is concentrated in degrees $> \kappa$. Let $V$ be a finite dimensional dg module concentrated in degrees $\leq 0$. The homology 
\[
\mathrm{H}_*\left(\mathrm{Hom}\left(\mathcal{C}(n) , V^{\otimes n} \right)^{\mathbb{S}_n}\right)
\]
is concentrated in degrees $< -\kappa$.
\end{lemma}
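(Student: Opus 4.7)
The plan is to reduce the claim to a straightforward degree-counting argument by simplifying both the target $V^{\otimes n}$ and the source $\C(n)$. First, since $V$ is finite dimensional over a field, there is a decomposition $V \cong V_{\mathrm{ac}} \oplus \mathrm{H}V$ as dg $\kk$-modules, with $V_{\mathrm{ac}}$ acyclic and $\mathrm{H}V$ carrying the zero differential. Taking $n$-fold tensor products induces an $\mathbb{S}_n$-equivariant splitting $V^{\otimes n} \cong (\mathrm{H}V)^{\otimes n} \oplus W$ in which $W$ is a bounded acyclic dg $\mathbb{S}_n$-module. Since $\C$ is quasi-planar, the dg $\mathbb{S}_n$-module $\C(n)$ is $\mathbb{S}_n$-tame, so the functor $\mathrm{Hom}(\C(n), -)^{\mathbb{S}_n}$ preserves tame equivalences between bounded-below objects; consequently the $W$-summand contributes trivially to homology, reducing the problem to computing the homology of $\mathrm{Hom}(\C(n), (\mathrm{H}V)^{\otimes n})^{\mathbb{S}_n}$, where the target has zero differential and is concentrated in homological degrees $\leq 0$.

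Next, the hypothesis $\mathrm{H}_{\leq \kappa}(\C(n)) = 0$, together with $\C(n)$ being bounded below, implies that the good truncation $\tau_{\geq \kappa + 1} \C(n) \hookrightarrow \C(n)$ is a quasi-isomorphism, and therefore a tame equivalence, by the coincidence of projective and tame model structures on bounded-below objects recalled in the paper. Choosing a projective resolution $Q \twoheadrightarrow \tau_{\geq \kappa + 1} \C(n)$ by a bounded-below complex of projective $\kk[\mathbb{S}_n]$-modules concentrated in homological degrees $\geq \kappa + 1$, one obtains a $\mathbb{S}_n$-tame model $Q$ of $\C(n)$ within that range, inducing a quasi-isomorphism
\[
\mathrm{Hom}(\C(n), (\mathrm{H}V)^{\otimes n})^{\mathbb{S}_n} \simeq \mathrm{Hom}(Q, (\mathrm{H}V)^{\otimes n})^{\mathbb{S}_n}.
\]

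The conclusion then follows by direct degree counting: any homogeneous $\mathbb{S}_n$-equivariant map $Q \to (\mathrm{H}V)^{\otimes n}$ of degree $d$ has a non-trivial component $Q_i \to (\mathrm{H}V)^{\otimes n}_{i+d}$ only when $i \geq \kappa + 1$ and $i + d \leq 0$, forcing $d \leq -\kappa - 1 < -\kappa$. Hence the chain complex $\mathrm{Hom}(Q, (\mathrm{H}V)^{\otimes n})^{\mathbb{S}_n}$ is itself already concentrated in degrees $< -\kappa$, and so is its homology.

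The main point to verify carefully is the construction in the second paragraph, where one must exhibit a $\mathbb{S}_n$-tame model of $\C(n)$ lying in degrees $\geq \kappa + 1$ and check that the Hom against such a model computes the correct derived invariant. Both ingredients rest on the tame model structure machinery developed in the preceding sections together with standard homological algebra for bounded-below complexes of $\kk[\mathbb{S}_n]$-modules over a field.
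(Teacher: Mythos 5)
Your argument is correct and follows essentially the same route as the paper's proof: truncate $\C(n)$ above degree $\kappa$, replace the truncation by an $\mathbb{S}_n$-projective resolution concentrated in degrees $\geq \kappa+1$, check that $\mathrm{Hom}(-,V^{\otimes n})^{\mathbb{S}_n}$ applied to this replacement computes the correct derived invariants, and conclude by counting degrees. The only divergence is your first paragraph: splitting $V$ into $\mathrm{H}V$ plus an acyclic summand is superfluous, since the final degree count uses only that $V^{\otimes n}$ is concentrated in degrees $\leq 0$, which already holds for $V$ itself.
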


\begin{proof}
Let us denote $\tau_{\kappa+1}(\mathcal{C}(n))$ the "intelligent" truncation of $\C(n)$ at degree $\kappa+1$ given by
\[
\tau_{\kappa+1}(\mathcal{C}(n))_k = 
\begin{cases}
    \mathcal{C}(n)_k \text{ if } k \geq \kappa+1~,
    \\
    0 \text{ if } k \leq \kappa~,
    \\
    \mathrm{Ker}(d : \mathcal{C}(n)_{\kappa+1} \to \mathcal{C}(n)_{\kappa+1}) \text{ if } k = \kappa+1~.
\end{cases}
\]
There is a quasi-isomorphism
\[
\tau_{\kappa+1}(\mathcal{C}(n)) \qi  \C(n)~.
\]
The $\mathbb S_n$-module $\C(n)$ is $\mathbb{S}_n$-projective since $\C$ is quasi-planar. In general $\tau_{\kappa+1} \C(n)$ is not a $\mathbb{S}_n$-projective any more. Nevertheless, we can take a cofibrant resolution $P$ of it in the model category of dg $\mathbb S_n$-module in degrees strictly above $\kappa$ endowed with the projective model structure. It is also cofibrant when seen as an unbounded dg $\mathbb S_n$-module for the projective model strucutre. Thus we get a composite quasi-isomorphism of $\mathbb{S}_n$-projective dg $\mathbb S_n$-modules $P \qi \C(n)$.

\medskip

The model category of dg $\mathbb S_n$-module with the projective model structure is homotopically enriched-tensored-cotensored over the dg $\mathbb{S}_n$-modules with the injective model structure. By applying the functor $\mathrm{Hom}\left(- , V^{\otimes n} \right)$ we get a quasi-isomorphism
\[
\mathrm{Hom}\left(\mathcal{C}(n) , V^{\otimes n} \right) \qi \mathrm{Hom}\left(P , V^{\otimes n} \right)
\]

of $\mathbb{S}_n$-injective dg $\mathbb{S}_n$-modules. This give a quasi-isomorphism between their respective $\mathbb{S}_n$-invariants since these also compute the homotopy invariants. But in degree $-\kappa$ and above, the dg module $\mathrm{Hom}_{\mathbb{S}_n}\left(P , V^{\otimes n} \right)$ is zero, therefore so is its homology.
\end{proof}

\begin{theorem}\label{thm: main theorem}
Let $\C$ be a tempered $0$-reduced quasi-planar conilpotent dg cooperad. The canonical adjunction of $\infty$-categories

\[
\begin{tikzcd}[column sep=5pc,row sep=5pc]
\mathsf{FMP}_{\Omega\C} \arrow[r,"\Phi"{name=LDC},shift left=1.1ex ] 
&\mathsf{dg}~\C^*\text{-}\mathsf{alg}~[\mathrm{Q.iso}^{-1}]~,\arrow[l,"\Psi"{name=TD},shift left=1.1ex] \arrow[phantom, from=TD, to=LDC, , "\dashv" rotate=90] 
\end{tikzcd}
\] 
\vspace{0.1pc}
between the $\infty$-categories of formal moduli problems over $\Omega \C$-algebras and the $\infty$-category of dg $\C^*$-algebras is an equivalence.
\end{theorem}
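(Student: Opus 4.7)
The plan is to reduce the statement, via Theorem \ref{Thm: the canonical adjunction} and Proposition \ref{propositionzeroreducedcase}, to a purely arity-wise counting problem that is resolved by the tempered hypothesis and Lemma \ref{lemma: key lemma}.

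First, I would invoke Theorem \ref{Thm: the canonical adjunction}: it suffices to show that for every strictly cellular dg $\C^*$-algebra $\mathfrak{g}$, the derived unit $\mathbb{L}\eta_\mathfrak{g}$ of the adjunction $\mathbb{L}\mathrm{Ab} \dashv \mathrm{Res}$ is a quasi-isomorphism. Since $\C$ is $0$-reduced, Proposition \ref{propositionzeroreducedcase} reduces this further to the case where $\mathfrak{g} = \C^* \circ V$ is a free dg $\C^*$-algebra generated by a finite-dimensional dg module $V$ concentrated in non-positive degrees. In that case, $\C^* \circ V$ is cofibrant and $\mathrm{Ab}$ preserves free objects, so the derived unit identifies with the strict one, namely the canonical inclusion
\[
\eta_V: \bigoplus_{n \geq 0} \left(\C(n)^* \otimes V^{\otimes n}\right)^{\mathbb{S}_n} \rightarrowtail \prod_{n \geq 0} \mathrm{Hom}\left(\C(n), V^{\otimes n}\right)^{\mathbb{S}_n}
\]
coming from the map of monads in Lemma \ref{lemma: adjunction Ab - Res}.

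Since $V$ is finite-dimensional, each arity-$n$ term of the source and target of $\eta_V$ are canonically isomorphic complexes, so $\eta_V$ is arity-wise an isomorphism. The only possible obstruction to it being a quasi-isomorphism is the discrepancy between a direct sum and a direct product after taking homology in a fixed degree. This is exactly what temperedness controls. Given $k \in \mathbb{Z}$, I would set $\kappa = |k|$ and apply Lemma \ref{lemma: key lemma} to obtain an integer $n_k$ such that
\[
\mathrm{H}_k\left(\mathrm{Hom}\left(\C(n), V^{\otimes n}\right)^{\mathbb{S}_n}\right) = 0
\]
for all $n \geq n_k$. Only finitely many arities therefore contribute to the homology in degree $k$, which forces the inclusion from $\bigoplus$ into $\prod$ to be an isomorphism on $\mathrm{H}_k$. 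Since this holds for every $k$, the map $\eta_V$ is a quasi-isomorphism.

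The hard part is really not this last counting step but ensuring that the two reductions stack cleanly: Proposition \ref{propositionzeroreducedcase} genuinely exploits the $0$-reducedness of $\C$ to set up two successive spectral sequences — one controlling the arity filtration, one controlling the cellular filtration of $V$ — both of which depend on the same arity-wise vanishing in a fixed degree that temperedness provides through Lemma \ref{lemma: key lemma}. Once the reductions are in place, the remainder of the argument is a uniform degree-by-degree finiteness estimate, and the theorem follows.
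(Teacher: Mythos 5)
Your proposal is correct and follows exactly the paper's route: Theorem \ref{Thm: the canonical adjunction} reduces to strictly cellular algebras, Proposition \ref{propositionzeroreducedcase} reduces to free algebras on finite-dimensional non-positively graded modules, and Lemma \ref{lemma: key lemma} together with temperedness gives the degree-wise finiteness that makes the inclusion of the direct sum into the product a quasi-isomorphism. The only difference is that you spell out the sum-versus-product comparison on each $\mathrm{H}_k$, which the paper leaves implicit in its two-line proof.
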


\begin{proof}
As a consequence of Lemma \ref{lemma: key lemma}, the (derived) unit map is a quasi-isomorphism for every dg $\C^\ast$-algebra freely generated by a finite dimensionsal dg module in non-positive degrees. We conclude by Proposition \ref{propositionzeroreducedcase}.
\end{proof}


\section{Examples}\label{Section: Examples}
\subsection{The characteristic zero case}
If $\kk$ is a field of characteristic zero, the constructions performed so far simplify: we do not need to assume that $\C$ is quasi-planar any more, since every cooperad $\C$ is equivalent to a quasi-planar one. In this context, the methods developed so far provide a new proof of the main results of D. Calaque, R. Campos and J. Nuiten in \cite{CCN19}. See Remark \ref{Rmk: comparison tempered and splendid} for a comparison between tempered cooperads and splendid operads. 

\begin{theorem}[{\cite[Theorem 1.3]{CCN19}}]\label{thm: CCN19}
Let $\mathcal{P}$ be a connective splendid dg operad. There is a canonical equivalence
\[
\mathsf{FMP}_{\mathcal{P}} \simeq \mathsf{dg}~(\mathrm{B}\mathcal{P})^*\text{-}\mathsf{alg}~[\mathrm{Q.iso}^{-1}]~.
\]
\vspace{0.1pc}

between the $\infty$-categories of formal moduli problems over $\mathcal{P}$-algebras and of dg $(\mathrm{B}\mathcal{P})^*$-algebras when localized at quasi-isomorphisms. 
\end{theorem}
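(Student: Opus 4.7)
The plan is to deduce this characteristic zero statement as a direct corollary of Theorem \ref{thm: main theorem}, using the comparison between splendid operads and tempered cooperads recalled in Remark \ref{Rmk: comparison tempered and splendid}. The main point is to produce a \emph{tempered} cooperad $\C$ whose cobar construction resolves $\mathcal{P}$ and whose linear dual encodes the same algebras as $(\mathrm{B}\mathcal{P})^*$ up to quasi-isomorphism.

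First, I would reduce to a suitable choice of cooperad. Since $\kk$ has characteristic zero, every conilpotent dg cooperad is automatically quasi-planar (the symmetric group actions are fully reducible and the quasi-planar filtration is trivially constructed), so the quasi-planarity hypothesis of Theorem \ref{thm: main theorem} can be dropped. By Remark \ref{Rmk: comparison tempered and splendid}, since $\mathcal{P}$ is connective and splendid, the transferred $\infty$-operad structure on $\mathrm{H}\mathcal{P}$ yields a weak equivalence of dg cooperads $\mathrm{B}\mathcal{P} \qi \mathrm{B}\mathrm{H}\mathcal{P}$, where $\C \coloneqq \mathrm{B}\mathrm{H}\mathcal{P}$ is tempered and $0$-reduced. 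The cobar-bar counit then provides a cofibrant resolution $\Omega\C \qi \Omega\mathrm{B}\mathcal{P} \qi \mathcal{P}$ in the semi-model category of dg operads.

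Next, I would invoke the invariance lemma at the very start of Section 1 to identify $\mathsf{FMP}_{\mathcal{P}} \simeq \mathsf{FMP}_{\Omega\C}$, which is legitimate because both $\mathcal{P}$ and $\Omega\C$ are $\mathbb{S}$-projective in characteristic zero (all dg $\mathbb{S}$-modules are). Theorem \ref{thm: main theorem} then applies to the tempered quasi-planar $0$-reduced cooperad $\C$, producing an equivalence
\[
\mathsf{FMP}_{\Omega\C} \simeq \mathsf{dg}~\C^*\text{-}\mathsf{alg}~[\mathrm{Q.iso}^{-1}]~.
\]
The last step is to rewrite $\C^* = (\mathrm{B}\mathrm{H}\mathcal{P})^*$ in terms of $(\mathrm{B}\mathcal{P})^*$. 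The quasi-isomorphism $\mathrm{B}\mathcal{P} \qi \mathrm{B}\mathrm{H}\mathcal{P}$ is arity-wise a quasi-isomorphism between dg $\mathbb{S}_n$-modules whose duals are both $\mathbb{S}_n$-tame in characteristic zero, so linear dualization yields a quasi-isomorphism of dg operads $(\mathrm{B}\mathrm{H}\mathcal{P})^* \qi (\mathrm{B}\mathcal{P})^*$. By Proposition \ref{prop: tame model structures on P algebras}, this induces a Quillen equivalence on categories of dg algebras, and hence an equivalence on their localizations.

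The main obstacle will be the very last step: one must be careful that the identification between $\C^*$-algebras and $(\mathrm{B}\mathcal{P})^*$-algebras at the $\infty$-categorical level is the \emph{correct} one, meaning that it is compatible with the unit map $\mathbb{L}\mathrm{Ab} \dashv \mathrm{Res}$ used to prove Theorem \ref{thm: main theorem}. Concretely, one has to check that under the equivalence $\mathsf{dg}~(\mathrm{B}\mathrm{H}\mathcal{P})^*\text{-}\mathsf{alg} \simeq \mathsf{dg}~(\mathrm{B}\mathcal{P})^*\text{-}\mathsf{alg}$, the $\infty$-category of cellular objects is preserved, and that the resulting functor $\Psi$ matches the one constructed by Calaque--Campos--Nuiten (up to a choice of direction for the adjunction, as emphasized in the introduction). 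This is a routine but bookkeeping-heavy verification, which I expect to follow from functoriality of the entire construction in quasi-isomorphisms of quasi-planar cooperads, a fact that one may establish by tracing through the definitions of Section~2 and Section~3 and observing that all the intermediate Quillen adjunctions $(-)^\circ \dashv (-)^*$, $\widehat{\Omega}_\iota \dashv \widehat{\mathrm{B}}_\iota$ and $\mathrm{Ab} \dashv \mathrm{Res}$ are natural in $\C$.
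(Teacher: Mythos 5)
Your proposal follows exactly the route the paper intends: the paper gives no separate proof of Theorem \ref{thm: CCN19} and simply derives it from Theorem \ref{thm: main theorem} together with Remark \ref{Rmk: comparison tempered and splendid}, which is precisely the chain of reductions you carry out (replace $\mathcal{P}$ by $\Omega\mathrm{B}\mathrm{H}\mathcal{P}$, invoke the invariance of $\mathsf{FMP}$ under quasi-isomorphisms of $\mathbb{S}$-projective operads, apply the tempered criterion, and transfer along the dual quasi-isomorphism of operads).

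One justification in your first step is wrong as stated: in characteristic zero a conilpotent dg cooperad is \emph{not} automatically quasi-planar, since quasi-planarity requires the underlying graded $\mathbb{S}$-module to be \emph{free} (of the form $\C_{\pl}\otimes\mathbb{S}$), not merely projective; for instance the cocommutative cooperad fails this. The correct repair, which is what the paper actually asserts in Section \ref{Section: Examples}, is that every conilpotent dg cooperad is \emph{weakly equivalent} to a quasi-planar one --- concretely, one applies Theorem \ref{thm: main theorem} to the quasi-planar model $\mathrm{B}(\operad{E}\otimes\mathrm{H}\mathcal{P})$ of Proposition \ref{prop: cofibrant replacement for operads}, which is quasi-isomorphic to $\mathrm{B}\mathrm{H}\mathcal{P}$ and hence still tempered, temperedness being a condition on homology. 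With that substitution the rest of your argument goes through unchanged, and your final compatibility worry is indeed only bookkeeping, since the theorem as stated only claims the existence of a canonical equivalence.
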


While their results provided inspiration for the ones obtained in this paper, it should be noted that their arguments are very similar to the original arguments of J. Lurie \cite{Lurie11}. In particular, the condition that $\mathcal{P}$ is splendid (which is equivalent to $\mathrm{B}\mathcal{P}$ being tempered) only appears as a technical side-condition in order to prove the prerequisites that make the $\infty$-categorical machinery constructed in \cite{Lurie11} run. On the other hand, from the perspective of the new proof given in this paper, this condition is conceptually explained by the comparison between cellular $\mathrm{B}\mathcal{P}$-algebras and cellular $(\mathrm{B}\mathcal{P})^*$-algebras. 

\medskip

\textbf{Lurie--Pridham Theorem.} Let us consider $\mathcal{P}$ to be the dg operad $\mathcal{C}\mathrm{om}$, which encodes non-unital commutative algebras. In this case the conilpotent dg cooperad obtained is $\mathrm{B} \mathcal{C}\mathrm{om}$, and its linear dual $\Omega \mathcal{C}\mathrm{om}^*$ encodes shifted $\mathcal{L}_\infty$-algebras. 

\begin{lemma}\label{prop: sLie tempered}
The dg cooperad $\mathrm{B}\mathcal{C}\mathrm{om}$ is tempered. 
\end{lemma}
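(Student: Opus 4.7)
The plan is to exploit the Koszul duality of the commutative operad in order to compute, up to quasi-isomorphism, the homology of $\mathrm{B}\mathcal{C}\mathrm{om}(n)$ arity by arity. Since $\mathcal{C}\mathrm{om}$ is a Koszul operad, its Koszul dual cooperad $\mathcal{C}\mathrm{om}^{\text{\rm !`}}$ admits a canonical quasi-isomorphism $\mathcal{C}\mathrm{om}^{\text{\rm !`}} \qi \mathrm{B}\mathcal{C}\mathrm{om}$ of conilpotent dg cooperads, see for instance \cite[Chapter 7]{LodayVallette}. A direct degree count in the bar complex---each internal vertex of a tree contributes one to the homological degree via the suspension of $\overline{\mathcal{C}\mathrm{om}}$, and all generating operations of $\mathcal{C}\mathrm{om}$ sit in arity $2$ and degree $0$---shows that $\mathcal{C}\mathrm{om}^{\text{\rm !`}}(n)$ is concentrated in homological degree $n-1$ for $n \geq 1$ (and is zero for $n = 0$, since $\mathcal{C}\mathrm{om}$ is $0$-reduced).

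Once this is established, the temperedness condition reduces to an elementary arithmetic step: given any $\kappa \geq 0$, set $n_\kappa \coloneqq \kappa + 2$; then for all $n \geq n_\kappa$ and all $k \leq \kappa$ one has $n - 1 \geq \kappa + 1 > k$, which forces $\mathrm{H}_k(\mathrm{B}\mathcal{C}\mathrm{om}(n)) = 0$. This is exactly the definition of tempered.

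The only bookkeeping point concerns the quasi-planar hypothesis in the definition of a tempered cooperad: since we are in characteristic zero, one can always replace $\mathrm{B}\mathcal{C}\mathrm{om}$ by the quasi-planar model $\mathrm{B}(\mathcal{E} \otimes \mathcal{C}\mathrm{om})$ furnished by Proposition \ref{prop: cofibrant replacement for operads}; the canonical quasi-isomorphism $\mathcal{E} \otimes \mathcal{C}\mathrm{om} \qi \mathcal{C}\mathrm{om}$ induces an arity-wise quasi-isomorphism on bar constructions, so the homological vanishing transfers verbatim to the chosen quasi-planar model. I do not anticipate any genuine obstacle beyond careful tracking of suspensions; the entire argument rests on the classical Koszul duality between $\mathcal{C}\mathrm{om}$ and $\mathcal{L}\mathrm{ie}$, combined with a trivial inequality.
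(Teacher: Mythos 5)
Your proof is correct and follows essentially the same route as the paper: the paper's one-line argument identifies $\mathrm{B}\mathcal{C}\mathrm{om}$ up to quasi-isomorphism with the shifted Lie cooperad $s\mathcal{L}\mathrm{ie}^*$ and checks temperedness by inspection, which is exactly the Koszul-duality computation and degree count you spell out. The extra bookkeeping about the quasi-planar model $\mathrm{B}(\mathcal{E}\otimes\mathcal{C}\mathrm{om})$ is a reasonable precision that the paper leaves implicit.
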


\begin{proof}
It is quasi-isomorphic to the shifted Lie cooperad $s\mathcal{L}\mathrm{ie}^*$, which is tempered by direct inspection.
\end{proof}

\begin{theorem}[Lurie-Pridham]
There is a canonical equivalence 

\[
\mathsf{FMP}_{\mathcal{C}\mathrm{om}} \simeq \mathsf{dg}~\mathsf{Lie}\text{-}\mathsf{alg}~[\mathrm{Q.iso}^{-1}]~.
\]
\vspace{0.1pc}

between the $\infty$-categories of formal moduli problems and of dg Lie algebras. 
\end{theorem}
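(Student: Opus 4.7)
The plan is a two-step reduction that treats the Lurie--Pridham theorem as a direct specialization of Theorem \ref{thm: CCN19}, followed by classical Koszul duality between $\mathcal{C}\mathrm{om}$ and $\mathcal{L}\mathrm{ie}$.

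First I would verify the hypotheses of Theorem \ref{thm: CCN19} for $\mathcal{P} = \mathcal{C}\mathrm{om}$. The operad $\mathcal{C}\mathrm{om}$ is connective, and by Remark \ref{Rmk: comparison tempered and splendid} its splendor in the sense of \cite{CCN19} is equivalent to the temperedness of $\mathrm{B}\mathcal{C}\mathrm{om}$, which is precisely Lemma \ref{prop: sLie tempered}. Theorem \ref{thm: CCN19} then yields an equivalence of $\infty$-categories
\[
\mathsf{FMP}_{\mathcal{C}\mathrm{om}} \simeq \mathsf{dg}~(\mathrm{B}\mathcal{C}\mathrm{om})^*\text{-}\mathsf{alg}~[\mathrm{Q.iso}^{-1}]~.
\]

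Second, I would identify the right-hand side with the $\infty$-category of dg Lie algebras. The standard Koszul quasi-isomorphism of conilpotent dg cooperads $\mathrm{B}\mathcal{C}\mathrm{om} \qi s\mathcal{L}\mathrm{ie}^*$ (already invoked in the proof of Lemma \ref{prop: sLie tempered}) dualizes, since $s\mathcal{L}\mathrm{ie}^*$ is arity-wise finite-dimensional, to a quasi-isomorphism of augmented dg operads $(\mathrm{B}\mathcal{C}\mathrm{om})^* \qi s\mathcal{L}\mathrm{ie}$. In characteristic zero both operads are automatically $\mathbb{S}$-projective, so this quasi-isomorphism induces a Quillen equivalence on the associated semi-model structures and hence an equivalence of $\infty$-categories after localizing at quasi-isomorphisms. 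Composing with the tautological isomorphism of categories $\mathsf{dg}~s\mathcal{L}\mathrm{ie}\text{-}\mathsf{alg} \cong \mathsf{dg}~\mathcal{L}\mathrm{ie}\text{-}\mathsf{alg}$ implemented by the operadic desuspension $V \mapsto s^{-1}V$ of underlying chain complexes concludes the identification.

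The entire non-formal content of the theorem has been absorbed into Theorem \ref{thm: CCN19} and ultimately into Lemma \ref{prop: sLie tempered}, so I do not expect any real obstacle; all the remaining manipulations are formal consequences of Koszul duality, finite-dimensionality of $\mathcal{L}\mathrm{ie}(n)$, and shifts of operads. The only point that warrants mild care is making sure the chain of quasi-isomorphisms at the level of dg operads is transported correctly through the $\infty$-categorical equivalences, which is automatic because each step is a quasi-isomorphism between $\mathbb{S}$-projective dg operads.
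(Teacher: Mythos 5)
Your proposal is correct and follows essentially the same route as the paper: the paper's proof simply invokes Theorem \ref{thm: main theorem} together with Lemma \ref{prop: sLie tempered}, and your detour through Theorem \ref{thm: CCN19} is only a repackaging of that, since within this paper Theorem \ref{thm: CCN19} is itself obtained from Theorem \ref{thm: main theorem} via the tempered/splendid dictionary of Remark \ref{Rmk: comparison tempered and splendid}. Your second step (dualizing $\mathrm{B}\mathcal{C}\mathrm{om} \simeq s\mathcal{L}\mathrm{ie}^*$ to identify $(\mathrm{B}\mathcal{C}\mathrm{om})^*$-algebras with dg Lie algebras up to operadic desuspension, using that quasi-isomorphisms of $\mathbb{S}$-projective operads induce Quillen equivalences) is exactly the identification the paper leaves implicit, up to an immaterial reversal of the direction of the comparison quasi-isomorphism.
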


\begin{proof}
Follows from Theorem \ref{thm: main theorem} and Lemma \ref{prop: sLie tempered}.
\end{proof}

The heart of this new approach to the celebrated Lurie--Pridham theorem can be condensed in the fact that the canonical inclusion of dg Lie algebras
\[
\mathcal{L}\mathrm{ie}(V) \to \widehat{\mathcal{L}\mathrm{ie}}(V)
\]
between the free Lie algebra $\mathcal{L}\mathrm{ie}(V)$ and the completed free Lie algebra $\widehat{\mathcal{L}\mathrm{ie}}(V)$ on a finite dimensional dg module concentrated in degree $\leq 1$ is an isomorphism. 

\begin{remark}
The results of \cite{CCN19} have an extra level of generality with respect to our current work. They are valid over coherent homologically bounded dg unital commutative $\kk$-algebra $A$ (this generalization was already achieved by B. Hennion in \cite{Hennion} in the context of formal moduli problems of commutative algebras). Moreover, $A$ can also be a dg category satisfying analogue conditions, which heuristically corresponds to "adding colours" to a dg operad in dg $A$-modules. We nevertheless expect that the methods introduced in this paper can be generalized to a similar setting, as the only obtruction is to generalize the operadic methods to this setting, which was already partially done in \cite{CCN19}. 
\end{remark}

\subsection{Partition Lie algebras}
Let $\kk$ be field of any characteristic. We consider formal moduli problems over an $\mathbb{E}^{\mathrm{nu}}_\infty$ operad, that is, an operad which is a $\mathbb{S}$-projective resolution of the operad $\mathcal{C}\mathrm{om}$. This corresponds to spectral formal moduli problems over $\mathrm{H}\kk$-modules; we emphasize that do not consider the \textit{derived} version here.

\medskip

It was shown by L. Brantner and A. Mathew in \cite{brantnermathew} that the $\infty$-category of these formal moduli problems is equivalent to the $\infty$-category of \textit{(spectral) partition Lie algebras}. The $\infty$-category of partition Lie algebras was first defined to be the $\infty$-category of algebras over a monad. The question of whether this $\infty$-category could admit a simple presentation in terms of an algebraic $1$-category localized at some class of equivalences was settled by L. Brantner, R. Campos and J. Nuiten in \cite{pdalgebras}.

\begin{theorem}[{\cite[Proposition 4.43]{pdalgebras}}]
Let $\mathbb{E}$ be a connected $\mathbb{E}^{\mathrm{nu}}_\infty$-operad. There is a canonical equivalence 
\[
\mathsf{dg}~\Omega(\mathbb{E}^*)\text{-}\mathsf{alg}~[\mathrm{Q.iso}^{-1}] \simeq \mathcal{L}ie^\pi\text{-}\mathsf{alg}
\]
\vspace{0.1pc}

between the $\infty$-categories of dg $\Omega(\mathbb{E}^*)$-algebras and the $\infty$-category of partition Lie algebras. 
\end{theorem}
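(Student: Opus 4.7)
The plan is to deduce the theorem directly from the machinery already assembled in the paper: Theorem~\ref{Thm: the canonical adjunction} reduces the equivalence to a point-set quasi-isomorphism check, Proposition~\ref{propositionzeroreducedcase} (using the $0$-reducedness) reduces this check to free $\C^*$-algebras on finite dimensional modules in non-positive degrees, and the tempered hypothesis combined with Lemma~\ref{lemma: key lemma} takes care of what remains.

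First, by the equivalence $(1) \Leftrightarrow (3)$ of Theorem~\ref{Thm: the canonical adjunction}, it suffices to show that for every strictly cellular dg $\C^*$-algebra $\mathfrak g$ the derived unit $\mathfrak g \to \mathrm{Res}\,\mathbb L\mathrm{Ab}(\mathfrak g)$ is a quasi-isomorphism. Since $\C$ is $0$-reduced, Proposition~\ref{propositionzeroreducedcase} further reduces the problem to the case $\mathfrak g = \C^* \circ V$, where $V$ is a finite dimensional dg module concentrated in degrees $\leq 0$. For such a free algebra, $\mathbb L\mathrm{Ab}(\C^* \circ V)$ is computed by the free absolute $\C$-algebra on $V$, i.e.\ the completed object $\prod_{n \geq 0}\mathrm{Hom}(\C(n), V^{\otimes n})^{\mathbb S_n}$, and after forgetting along $\mathrm{Res}$ the unit map becomes the canonical inclusion
\[
\eta: \bigoplus_{n \geq 0}\bigl(\C(n)^* \otimes V^{\otimes n}\bigr)^{\mathbb S_n} \rightarrowtail \prod_{n \geq 0}\mathrm{Hom}\bigl(\C(n), V^{\otimes n}\bigr)^{\mathbb S_n}.
\]

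Since $V$ has finite total dimension, the natural map $\C(n)^* \otimes V^{\otimes n} \to \mathrm{Hom}(\C(n), V^{\otimes n})$ is an isomorphism of dg $\mathbb S_n$-modules for every fixed $n$, so $\eta$ is arity-wise the identity and the only thing to verify is that passing from arity-wise direct sum to arity-wise direct product induces an isomorphism on homology. Writing $M_n$ for the arity-$n$ factor, the differential of $\C^* \circ V$ (and of its completion) respects the arity grading, so homology decomposes arity-wise; using that quotients of dg modules commute with arbitrary products, we have $\mathrm H_k(\prod_n M_n) = \prod_n \mathrm H_k(M_n)$ and $\mathrm H_k(\bigoplus_n M_n) = \bigoplus_n \mathrm H_k(M_n)$. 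Hence $\eta$ is a quasi-isomorphism if and only if for every $k \in \mathbb Z$ one has $\mathrm H_k(M_n) = 0$ for all $n$ sufficiently large.

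This last vanishing is precisely where the tempered hypothesis enters, via Lemma~\ref{lemma: key lemma}. Fix $k \in \mathbb Z$ and set $\kappa := \max(-k, 0)$. By temperedness there exists $n_\kappa$ such that for $n \geq n_\kappa$ the homology $\mathrm H_*(\C(n))$ is concentrated in degrees $> \kappa$. Lemma~\ref{lemma: key lemma} then yields that for such $n$, the homology $\mathrm H_*(\mathrm{Hom}(\C(n), V^{\otimes n})^{\mathbb S_n})$ is concentrated in degrees $< -\kappa \leq k$, so in particular $\mathrm H_k(M_n) = 0$. Therefore only finitely many arities contribute to $\mathrm H_k(\eta)$, and $\mathrm H_k(\eta)$ is the identity on a finite direct sum, hence an isomorphism. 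This verifies condition $(3)$ of Theorem~\ref{Thm: the canonical adjunction} and proves the theorem.

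The main obstacle is essentially packaged into Lemma~\ref{lemma: key lemma}, where one needs to replace $\C(n)$ by a resolution that is both $\mathbb S_n$-projective \emph{and} concentrated in high enough degrees in order to promote the vanishing of the homology of $\C(n)$ in a range to a vanishing of the homotopy invariants $\mathrm{Hom}(\C(n), V^{\otimes n})^{\mathbb S_n}$ in the dual range; once that lemma is available, the rest of the argument is a formal assembly.
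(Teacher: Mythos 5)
There is a genuine mismatch here: what you prove is not the statement in question. The theorem you were asked about identifies the localized $1$-category $\mathsf{dg}~\Omega(\mathbb{E}^*)\text{-}\mathsf{alg}~[\mathrm{Q.iso}^{-1}]$ with $\mathcal{L}ie^\pi\text{-}\mathsf{alg}$, the $\infty$-category of \emph{partition Lie algebras} in the sense of Brantner--Mathew, i.e.\ algebras over an abstractly (monadically) defined monad on dg modules. That $\infty$-category never appears in your argument. What your proposal actually establishes is Theorem~\ref{thm: Barratt-Eccles encodes} of the paper, namely $\mathsf{FMP}_{\mathcal{E}^{\mathrm{nu}}} \simeq \mathsf{dg}~\Omega((\mathcal{E}^{\mathrm{nu}})^*)\text{-}\mathsf{alg}~[\mathrm{Q.iso}^{-1}]$, by running the tempered criterion (Theorem~\ref{Thm: the canonical adjunction}, Proposition~\ref{propositionzeroreducedcase}, Lemma~\ref{lemma: key lemma}) on $\C = \mathrm{B}(\mathcal{E}^{\mathrm{nu}})$; as a sketch of \emph{that} theorem your argument is fine and matches the paper. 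But the statement at hand is a rectification result for the monadic $\infty$-category of partition Lie algebras, which the paper does not prove at all: it is imported verbatim from \cite[Proposition 4.43]{pdalgebras}, where it is obtained by comparing the derived free $\Omega(\mathbb{E}^*)$-algebra monad with the partition Lie algebra monad (using that both preserve sifted colimits and agree on perfect complexes, the point-set coinvariants against the $\mathbb{S}$-tame resolution computing the strict/genuine invariants in Brantner--Mathew's formula) and then applying the $\infty$-categorical Barr--Beck theorem.

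One could in principle salvage your route: combining your equivalence $\mathsf{FMP}_{\mathcal{E}^{\mathrm{nu}}} \simeq \mathsf{dg}~\Omega((\mathcal{E}^{\mathrm{nu}})^*)\text{-}\mathsf{alg}~[\mathrm{Q.iso}^{-1}]$ with the Brantner--Mathew equivalence $\mathsf{FMP} \simeq \mathcal{L}ie^\pi\text{-}\mathsf{alg}$ would yield the claimed equivalence by composition. But you never invoke the Brantner--Mathew theorem, so as written the conclusion simply does not follow; and even with that input one would still owe an argument that the composite equivalence is the canonical one (compatible with the forgetful functors to dg modules), which is exactly the content that the direct monad comparison in \cite{pdalgebras} delivers for free. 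As it stands, the proposal proves a neighbouring theorem and leaves the stated one untouched.
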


Therefore, the semi-model category of dg $\Omega(\mathbb{E}^*)$-algebras is a model for partition Lie algebras, and \textit{a fortiori}, their localized $\infty$-category is equivalent to the $\infty$-category of formal moduli problems of $\mathbb{E}^{\mathrm{nu}}_\infty$-algebras over a field $\kk$ of any characteristic. 

\medskip

On the other hand, the methods introduced in this paper give a direct proof that there is an equivalence of $\infty$-categories between the $\infty$-category of formal moduli problems of $\mathbb{E}^{\mathrm{nu}}_\infty$-algebras and the $\infty$-category of dg $\Omega(\mathbb{E}^*)$-algebras. For that, we start by choosing a specific model, the non-unital Barratt-Eccles dg operad $\mathcal{E}^{\mathrm{nu}}$. 

\begin{lemma}\label{prop: Bar de Barratt-Eccles tempered}
The conilpotent dg cooperad $\mathrm{B}(\mathcal{E}^{\mathrm{nu}})$ is quasi-planar and tempered. 
\end{lemma}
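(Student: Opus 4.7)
The plan has two independent parts.

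\emph{Quasi-planarity.} Recall that $\mathcal{E}^{\mathrm{nu}}(r) = C_*(N\mathbb{S}_r)$ is the normalized chain complex on the nerve of $\mathbb{S}_r$, and comes equipped with a filtration by simplicial degree. Because the non-degenerate $i$-simplices of $N\mathbb{S}_r$ are freely permuted by $\mathbb{S}_r$, the associated graded pieces split as a planar factor tensored with $\kk[\mathbb{S}_r]$. This is precisely the kind of input used in \cite[Propositions 11, 12, 17]{premierpapier} to endow $\mathrm{B}(\mathcal{E} \otimes \mathcal{P})$ with a quasi-planar structure. I would apply the same arguments \emph{mutatis mutandis}, combining the simplicial filtration on $\mathcal{E}^{\mathrm{nu}}$ with the natural weight filtration of the bar construction by number of tree vertices; the argument of Proposition~\ref{prop: cofibrant replacement for operads} goes through identically.

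\emph{Tempered.} The goal is to establish that $\mathrm{H}_k(\mathrm{B}\mathcal{E}^{\mathrm{nu}}(n))$ vanishes below a threshold growing linearly in $n$; the tempered condition then follows by taking $n_\kappa$ large enough. My approach is to filter $\mathrm{B}\mathcal{E}^{\mathrm{nu}}(n)$ by the number of internal vertices of the underlying decorated trees. At each total degree this filtration is finite, since $\mathcal{E}^{\mathrm{nu}}$ is concentrated in non-negative degrees and each vertex contributes at least one degree of suspension, so the associated spectral sequence converges strongly. Using that each $\mathcal{E}^{\mathrm{nu}}(r)$ is a free $\kk[\mathbb{S}_r]$-resolution of $\kk$ (being the chains on $E\mathbb{S}_r$), the $E^1$-page identifies with the bar construction of $\mathcal{C}\mathrm{om}^{\mathrm{nu}}$ at arity $n$. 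Its homology is a suspension of the reduced chains of the partition complex $\Pi_n$, and the classical Björner-Wachs computation (valid in any characteristic) shows these chains are supported in degrees bounded below by a linear function of $n$, yielding the required connectivity estimate.

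The main obstacle is identifying the $E^1$-page correctly when $\kk$ has positive characteristic, since $\mathcal{C}\mathrm{om}^{\mathrm{nu}}$ is not $\mathbb{S}$-projective and so the bar construction of $\mathcal{C}\mathrm{om}^{\mathrm{nu}}$ does not behave as expected under derived functors. The resolution is that on the $E^1$-page one is only taking homology of the individual chain complexes $\mathcal{E}^{\mathrm{nu}}(r_i)$ attached to each vertex of each tree, with no symmetric group coinvariants or invariants involved at this stage; the free-resolution property of each $\mathcal{E}^{\mathrm{nu}}(r)$ makes the identification go through at the chain level, circumventing the characteristic issue and reducing the problem to the classical combinatorics of $\Pi_n$.
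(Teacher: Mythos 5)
Your proposal is correct, and for quasi-planarity it is essentially the paper's argument: the paper simply cites \cite[Section 2]{premierpapier}, which is the same mechanism you describe (the simplicial filtration of the Barratt--Eccles operad, whose graded pieces split as a planar piece tensored with $\kk[\mathbb{S}_r]$ because the symmetric group acts freely on non-degenerate simplices of $E\mathbb{S}_r$ --- note it is the chains on $E\mathbb{S}_r$, not on the one-object nerve $N\mathbb{S}_r$, but the freeness you invoke is the right one). For temperedness the routes genuinely differ in presentation though not in substance: the paper disposes of it in one line by citing \cite[Theorem 6.8]{FressePartitionPoset}, which identifies $\mathrm{H}_*(\mathrm{B}\mathcal{E}^{\mathrm{nu}}(n))$ with $s\mathcal{L}\mathrm{ie}^*(n)$, concentrated in the single degree $n-1$; you instead unwind the proof of that theorem, filtering $\mathrm{B}\mathcal{E}^{\mathrm{nu}}(n)$ by number of vertices, using that the filtration is finite (at most $n-1$ vertices since $\overline{\mathcal{E}^{\mathrm{nu}}}$ is concentrated in arities $\geq 2$), identifying the $E^1$-page with $\mathrm{B}\mathcal{C}\mathrm{om}^{\mathrm{nu}}(n)$ via K\"unneth --- your observation that no (co)invariants intervene at this stage is exactly the right resolution of the characteristic issue --- and then invoking the Cohen--Macaulayness of the partition lattice for the connectivity bound. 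Your argument yields only the vanishing range rather than the full identification with $s\mathcal{L}\mathrm{ie}^*$, but that is all temperedness requires; what it buys is self-containedness and a transparent reason why the bound is linear in $n$, at the cost of re-deriving a known computation that the paper prefers to quote.
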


\begin{proof}
For the fact that $\mathrm{B}(\mathcal{E}^{\mathrm{nu}})$ is quasi-planar, see \cite[Section 2]{premierpapier}. It follows from \cite[Theorem 6.8]{FressePartitionPoset} that the homology of $\mathrm{B}(\mathcal{E}^{\mathrm{nu}})$ is the underlying $\mathbb{S}$-module of the shifted Lie cooperad $s\mathcal{L}\mathrm{ie}^*$.
\end{proof}

\begin{theorem}\label{thm: Barratt-Eccles encodes}
Let $\mathcal{E}^{\mathrm{nu}}$ be the non-unital Barratt-Eccles dg operad. There is a canonical equivalence 
\[
\mathsf{FMP}_{\mathcal{E}^{\mathrm{nu}}} \simeq \mathsf{dg}~\Omega((\mathcal{E}^{\mathrm{nu}})^*)\text{-}\mathsf{alg}~[\mathrm{Q.iso}^{-1}]~.
\]
\vspace{0.1pc}

between the $\infty$-categories of dg $\Omega((\mathcal{E}^{\mathrm{nu}})^*)$-algebras and the $\infty$-category of formal moduli problems of $\mathcal{E}^{\mathrm{nu}}$-algebras. 
\end{theorem}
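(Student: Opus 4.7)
The plan is to apply the main theorem (Theorem \ref{thm: main theorem}) with $\C = \mathrm{B}(\mathcal{E}^{\mathrm{nu}})$ and then identify the resulting category of dg $\C^*$-algebras with dg $\Omega((\mathcal{E}^{\mathrm{nu}})^*)$-algebras.

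First, I would invoke Proposition \ref{prop: cofibrant replacement for operads} to deduce that the canonical counit
\[
\epsilon: \Omega \mathrm{B}(\mathcal{E}^{\mathrm{nu}}) \qi \mathcal{E}^{\mathrm{nu}}
\]
is a quasi-isomorphism between $\mathbb{S}$-projective dg operads, so by the invariance lemma for formal moduli problems under quasi-isomorphisms of $\mathbb{S}$-projective dg operads (the lemma immediately following the definition of $\FMP_{\mathcal{P}}$), we obtain $\FMP_{\mathcal{E}^{\mathrm{nu}}} \simeq \FMP_{\Omega \mathrm{B}(\mathcal{E}^{\mathrm{nu}})}$. Next, since $\mathcal{E}^{\mathrm{nu}}(0) = 0$, the cooperad $\C = \mathrm{B}(\mathcal{E}^{\mathrm{nu}})$ is $0$-reduced, and by Lemma \ref{prop: Bar de Barratt-Eccles tempered} it is also quasi-planar and tempered. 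Theorem \ref{thm: main theorem} then yields
\[
\FMP_{\Omega \mathrm{B}(\mathcal{E}^{\mathrm{nu}})} \simeq \mathsf{dg}~(\mathrm{B}(\mathcal{E}^{\mathrm{nu}}))^*\text{-}\mathsf{alg}~[\mathrm{Q.iso}^{-1}].
\]

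The remaining task is to exhibit a canonical isomorphism of dg operads
\[
(\mathrm{B}(\mathcal{E}^{\mathrm{nu}}))^* \cong \Omega((\mathcal{E}^{\mathrm{nu}})^*).
\]
At the level of the underlying graded operads, both sides equal $\mathbb{T}(s^{-1}\overline{\mathcal{E}^{\mathrm{nu}}}^*)$ summed over reduced trees, one as $\mathbb{T}^c(s\overline{\mathcal{E}^{\mathrm{nu}}})^*$ and the other as $\mathbb{T}(s^{-1}(\overline{\mathcal{E}^{\mathrm{nu}}})^*)$. The key point that makes the linear dual compatible despite the arity-wise infinite dimensionality of $\mathcal{E}^{\mathrm{nu}}$ is that the augmentation ideal of the non-unital Barratt-Eccles operad is concentrated in arities $\geq 2$, so in each fixed arity $n$ only finitely many trees contribute to the bar construction. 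Thus arity-wise the sum is a finite direct sum of (infinite-dimensional) tensor products, and its linear dual is canonically the corresponding finite direct sum of tensor products of duals, which is exactly $\Omega((\mathcal{E}^{\mathrm{nu}})^*)(n)$. One then checks that the differentials match by unpacking the standard formulas: the coderivation part of the bar dualizes to the infinitesimal decomposition part of the cobar, while the part of the bar differential coming from the multiplication of $\mathcal{E}^{\mathrm{nu}}$ dualizes to the part of the cobar differential coming from the decomposition of $(\mathcal{E}^{\mathrm{nu}})^*$.

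The main obstacle is precisely this last identification, because taking linear duals interchanges sums and products, and it is only thanks to the arity-wise finiteness of the set of reduced trees that everything stays finite. Once this is established, chaining the three equivalences produces the desired canonical equivalence $\FMP_{\mathcal{E}^{\mathrm{nu}}} \simeq \mathsf{dg}~\Omega((\mathcal{E}^{\mathrm{nu}})^*)\text{-}\mathsf{alg}~[\mathrm{Q.iso}^{-1}]$.
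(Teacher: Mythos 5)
Your proposal is correct and follows essentially the same route as the paper, whose proof is simply the combination of Theorem \ref{thm: main theorem} with Lemma \ref{prop: Bar de Barratt-Eccles tempered}; the reduction $\mathsf{FMP}_{\mathcal{E}^{\mathrm{nu}}} \simeq \mathsf{FMP}_{\Omega\mathrm{B}(\mathcal{E}^{\mathrm{nu}})}$ and the identification $(\mathrm{B}(\mathcal{E}^{\mathrm{nu}}))^* \cong \Omega((\mathcal{E}^{\mathrm{nu}})^*)$ are left implicit there, and you are right to spell them out. One small precision: for the dual of a tensor product over a tree to split as the tensor product of duals you need, in addition to the arity-wise finiteness of the set of reduced trees, that each $\mathcal{E}^{\mathrm{nu}}(k)$ is degree-wise finite dimensional and non-negatively graded --- which holds for the Barratt--Eccles operad, so the argument goes through.
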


\begin{proof}
Follows from Theorem \ref{thm: main theorem} and Lemma \ref{prop: Bar de Barratt-Eccles tempered}.
\end{proof}

This proof has the advantage of directly providing point-set models.

\begin{remark}
The result of Theorem \ref{thm: Barratt-Eccles encodes} can be subsequently extended to other connected models for of $\mathbb{E}^{\mathrm{nu}}_\infty$-operads by the homotopy-invariance of the $\infty$-categories considered. 
\end{remark}

\begin{remark}
The results of \cite{brantnermathew} (resp. of \cite{pdalgebras}) have an extra level of generality with respect to our current work since their results are also valid when $\kk$ is a complete local ring (resp. a coherent $\mathbb{E}_\infty$-ring). We nevertheless expect that the methods introduced in this paper can be generalized provided that the operadic methods of \cite{premierpapier} are extending to this setting.
\end{remark}

\subsection{$\mathbb{E}_k$ formal moduli problems}
Over a field of any characteristic $\kk$, one recovers the correspondence between formal moduli problems over $\mathbb{E}_k^{\mathrm{nu}}$-algebras and $\mathbb{E}_k^{\mathrm{nu}}$-algebras, where $\mathbb{E}_k^{\mathrm{nu}}$ is a model for the non-unital little $k$-cubes operad. For $k=1$, this gives that formal moduli problems of non-unital associative algebras are encoded by non-unital dg associative algebras. See \cite[Section 3 and 4]{Lurie11}. 

\medskip

Let $\mathcal{E}^{\mathrm{nu}}$ be the non-unital Barratt-Eccles dg operad. It admits a filtration of sub dg operads  

\[
\mathcal{E}^{\mathrm{nu}}_1 \rightarrowtail \cdots \rightarrowtail \mathcal{E}^{\mathrm{nu}}_k \rightarrowtail \cdots \rightarrowtail \mathrm{colim}_{k} ~\mathcal{E}^{\mathrm{nu}}_k \cong \mathcal{E}^{\mathrm{nu}}~,
\]
\vspace{0.1pc}

where each $\mathcal{E}_k^{\mathrm{nu}}$ is a model for $\mathbb{E}_k^{\mathrm{nu}}$. See \cite[Section 1.6]{BergerFresse} for more details on this. 

\begin{lemma}\label{lemma: E_k tempered and quasi-planar}
Let $k \geq 1$. The conilpotent dg cooperad $\mathrm{B}\mathcal{E}_k^{\mathrm{nu}}$ is tempered and quasi-planar. 
\end{lemma}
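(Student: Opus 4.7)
The plan is to verify the two properties separately, treating quasi-planarity as a structural inheritance from $\mathcal{E}^{\mathrm{nu}}$ and temperedness as a Koszul-duality-based homology computation.

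For quasi-planarity, I would adapt the argument of \cite[Section 2]{premierpapier} producing the quasi-planar structure on $\mathrm{B}\mathcal{E}^{\mathrm{nu}}$. The underlying graded operad of $\mathcal{E}^{\mathrm{nu}}$ admits a planar decomposition $\mathcal{E}^{\mathrm{nu}}(n) \cong \mathcal{E}^{\mathrm{nu}}_{\mathrm{pl}}(n) \otimes \kk[\mathbb{S}_n]$ coming from the free $\mathbb{S}_n$-action on the simplicial set of tables of permutations. The Berger-Fresse filtration $\mathcal{E}_k^{\mathrm{nu}} \subset \mathcal{E}^{\mathrm{nu}}$ of \cite[Section 1.6]{BergerFresse} is defined by a combinatorial condition (absence of $(k+1)$-equivalent patterns between consecutive rows of a table) that is simultaneously $\mathbb{S}_n$-equivariant and visible on the planar part. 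Hence $\mathcal{E}_k^{\mathrm{nu}}(n) \cong (\mathcal{E}_k^{\mathrm{nu}})_{\mathrm{pl}}(n) \otimes \kk[\mathbb{S}_n]$, and the skeletal filtration by total tree-weight used in \cite[Proposition 11]{premierpapier} transfers verbatim to furnish a quasi-planar structure on $\mathrm{B}\mathcal{E}_k^{\mathrm{nu}}$.

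For temperedness, I would compute the arity-wise homological support of $\mathrm{B}\mathcal{E}_k^{\mathrm{nu}}$ through Koszul duality. Since $\mathcal{E}_k^{\mathrm{nu}}$ is an $\mathbb{S}$-projective model for $\mathbb{E}_k^{\mathrm{nu}}$, the dg cooperad $\mathrm{B}\mathcal{E}_k^{\mathrm{nu}}$ is quasi-isomorphic to the Koszul dual cooperad of the little $k$-cubes operad. The latter is well understood: in characteristic zero it is a shift of the $k$-Poisson cooperad and its arity-$n$ part is concentrated in the range $[n-1, k(n-1)]$. The key claim is that the lower bound $n-1$ on the homological support persists in arbitrary characteristic, which follows from a generalized partition-complex model for the bar homology of $\mathbb{E}_k$-operads extending \cite[Theorem 6.8]{FressePartitionPoset}, whose connectivity grows linearly with the arity. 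Granting this bound, temperedness is immediate: given $\kappa \geq 0$, set $n_\kappa = \kappa + 2$; then for all $n \geq n_\kappa$ and $j \leq \kappa$, one has $j < n-1$ and therefore $\mathrm{H}_j(\mathrm{B}\mathcal{E}_k^{\mathrm{nu}}(n)) = 0$.

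The main obstacle is securing the connectivity bound $\mathrm{H}_j(\mathrm{B}\mathcal{E}_k^{\mathrm{nu}}(n)) = 0$ for $j < n-1$ uniformly in the characteristic, since the Getzler--Jones formality theorem used in characteristic zero is unavailable over $\mathbb{F}_p$. I would try either to import the relevant partition-type complex computation from the integral setting, or alternatively to run an inductive argument using the sub-operad inclusions $\mathcal{E}_k^{\mathrm{nu}} \hookrightarrow \mathcal{E}_{k+1}^{\mathrm{nu}}$: the associated graded pieces are known to be highly connected (with connectivity growing in $k$), and a spectral sequence argument should propagate the connectivity bound from the base case $k=1$ (where $\mathcal{E}_1^{\mathrm{nu}}$ models $\mathrm{As}$ and the bar homology is concentrated exactly in degree $n-1$ by Koszul self-duality of the associative operad) upwards through the filtration.
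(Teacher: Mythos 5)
Your treatment of quasi-planarity is essentially the paper's: the paper likewise observes that each $\mathcal{E}_k^{\mathrm{nu}}(n)$ is a quasi-free dg $\mathbb{S}_n$-module (so that the underlying graded cooperad of $\mathrm{B}\mathcal{E}_k^{\mathrm{nu}}$ is planar) and then filters by the homological degree of $\mathcal{E}_k^{\mathrm{nu}}$ together with the weight of trees, exactly as in \cite[Section 2]{premierpapier}. That half of your argument is fine.

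The temperedness half has a genuine gap at precisely the step you flag as ``the key claim.'' You reduce correctly to the connectivity bound $\mathrm{H}_j\bigl(\mathrm{B}\mathcal{E}_k^{\mathrm{nu}}(n)\bigr)=0$ for $j<n-1$, and this bound is not formal: the weight grading on the bar construction only forces homological degree $\geq 1$ in each arity (vertices have arity $\geq 2$, but a tree on $n$ leaves can have as few as one vertex), so linear growth in $n$ is a genuine homological statement about $\mathbb{E}_k$. Neither of your two proposed routes is carried out, and the inductive one is shaky as stated: the quotients $\mathcal{E}_{k+1}^{\mathrm{nu}}/\mathcal{E}_k^{\mathrm{nu}}$ are not operads, so the filtration does not obviously induce a spectral sequence of bar constructions, and ``should propagate the connectivity bound'' is doing all the work. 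The missing ingredient is available off the shelf: Fresse's integral Koszul duality theorem for $E_n$-operads \cite{FresseEn} identifies $\mathrm{H}_*(\mathrm{B}\mathcal{E}_k^{\mathrm{nu}})$ with the underlying $\mathbb{S}$-module of the $k$-fold operadic suspension of the dual $(k-1)$-Gerstenhaber cooperad, over any ring; in arity $n$ this is concentrated in degrees $[\,n-1,\,k(n-1)\,]$, which gives exactly your bound and hence temperedness with $n_\kappa=\kappa+2$. This is what the paper does, and you should cite that result rather than attempt to reconstruct it from partition complexes or an induction on $k$.
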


\begin{proof}
Let us notice that for any $k \geq 1$, every $\mathcal{E}_k^{\mathrm{nu}}(n)$ is a quasi-free dg $\mathbb{S}_n$-module. Therefore the underlying conilpotent graded cooperad $\mathrm{B}\mathcal{E}_k^{\mathrm{nu}}$ is planar. Filtering by the homological degree of $\mathcal{E}^{\mathrm{nu}}_k$ together with the weight of rooted trees, gives the quasi-planar filtration on $\mathrm{B}\mathcal{E}^{\mathrm{nu}}_k$. See \cite[Section 2]{premierpapier} for an analogue filtration.

\medskip

It was shown in \cite{FresseEn} that the homology of $\mathrm{B}\mathcal{E}^{\mathrm{nu}}_k$ is given by the underlying $\mathbb{S}$-module of the $k$-fold suspension of the conilpotent dg cooperad $(k-1)\text{-}\mathcal{G}\mathrm{erst}^*$, which is the linear dual cooperad of the operad encoding non-unital $(k-1)$-Gerstenhaber algebras. It follows directly that $\mathrm{B}\mathcal{E}^{\mathrm{nu}}_k$ is tempered.
\end{proof}

\begin{theorem}\label{thm: E_k encodes}
Let $k \geq 1$ and let $\mathcal{E}^{\mathrm{nu}}_k$ the sub dg operad of the non-unital Barratt-Eccles dg operad encoding $\mathbb{E}^{\mathrm{nu}}_k$-algebras. There is a canonical equivalence 

\[
\mathsf{FMP}_{\mathcal{E}^{\mathrm{nu}}_k} \simeq \mathsf{dg}~\Omega ((\mathcal{E}^{\mathrm{nu}}_k)^*)\text{-}\mathsf{alg}~[\mathrm{Q.iso}^{-1}]~.
\]
\vspace{0.1pc}

between the $\infty$-categories of dg $\Omega ((\mathcal{E}^{\mathrm{nu}}_k)^*)$-algebras and of formal moduli problems over $\mathcal{E}^{\mathrm{nu}}_k$-algebras.
\end{theorem}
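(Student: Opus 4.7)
The plan is to reduce the statement to Theorem \ref{thm: main theorem} applied to the conilpotent dg cooperad $\C = \mathrm{B}\mathcal{E}^{\mathrm{nu}}_k$, imitating the proof of Theorem \ref{thm: Barratt-Eccles encodes}. First I would replace $\mathcal{E}^{\mathrm{nu}}_k$ by its canonical cofibrant resolution of the form $\Omega \mathrm{B}(\mathcal{E} \otimes \mathcal{E}^{\mathrm{nu}}_k)$ from Proposition \ref{prop: cofibrant replacement for operads}; however, since $\mathcal{E}^{\mathrm{nu}}_k$ is already $\mathbb{S}$-projective and $0$-reduced, and since the operadic cobar construction $\Omega$ provides a cofibrant dg operad as soon as its source is a quasi-planar conilpotent dg cooperad, it is enough to use directly $\Omega \mathrm{B}\mathcal{E}^{\mathrm{nu}}_k \qi \mathcal{E}^{\mathrm{nu}}_k$. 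By the invariance lemma at the beginning of Section 1, this identification produces an equivalence $\mathsf{FMP}_{\mathcal{E}^{\mathrm{nu}}_k} \simeq \mathsf{FMP}_{\Omega \mathrm{B}\mathcal{E}^{\mathrm{nu}}_k}$.

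Next I would check the hypotheses of Theorem \ref{thm: main theorem} for $\C = \mathrm{B}\mathcal{E}^{\mathrm{nu}}_k$. Quasi-planarity and temperedness are precisely the content of Lemma \ref{lemma: E_k tempered and quasi-planar}. For the zero-reduced condition, I would note that since $\mathcal{E}^{\mathrm{nu}}_k$ is non-unital, meaning $\mathcal{E}^{\mathrm{nu}}_k(0)=0$, the bar cooperad $\mathrm{B}\mathcal{E}^{\mathrm{nu}}_k = \treemod(s\overline{\mathcal{E}^{\mathrm{nu}}_k})$ is likewise concentrated in positive arities. Combining these verifications, Theorem \ref{thm: main theorem} yields the equivalence
\[
\mathsf{FMP}_{\Omega \mathrm{B}\mathcal{E}^{\mathrm{nu}}_k} \simeq \mathsf{dg}~(\mathrm{B}\mathcal{E}^{\mathrm{nu}}_k)^*\text{-}\mathsf{alg}~[\mathrm{Q.iso}^{-1}].
\]

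Finally, I would match the right-hand side with the one stated in the theorem. By construction $(\mathrm{B}\mathcal{E}^{\mathrm{nu}}_k)^*$ is quasi-isomorphic to the dg operad $\Omega((\mathcal{E}^{\mathrm{nu}}_k)^*)$ via the natural map coming from Koszul duality for operads: the linear dual of the (quasi-planar, degree-wise finite dimensional in each arity) bar construction identifies with the cobar construction of the linear dual cooperad. Transport of structure along this quasi-isomorphism of dg operads induces an equivalence between the corresponding $\infty$-categories of algebras, which combined with the previous steps gives the statement. The main obstacle, should there be any, is verifying that $(\mathrm{B}\mathcal{E}^{\mathrm{nu}}_k)^*$ and $\Omega((\mathcal{E}^{\mathrm{nu}}_k)^*)$ are indeed quasi-isomorphic as dg operads in a way that induces a Quillen equivalence of semi-model structures; but this is standard provided each arity $\mathcal{E}^{\mathrm{nu}}_k(n)$ is degree-wise finite dimensional, which is true by construction of the Barratt--Eccles filtration.
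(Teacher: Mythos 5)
Your proposal is correct and follows essentially the same route as the paper, whose proof is a one-line appeal to Theorem \ref{thm: main theorem} together with Lemma \ref{lemma: E_k tempered and quasi-planar}. You simply make explicit the steps the paper leaves implicit: the invariance $\mathsf{FMP}_{\mathcal{E}^{\mathrm{nu}}_k} \simeq \mathsf{FMP}_{\Omega \mathrm{B}\mathcal{E}^{\mathrm{nu}}_k}$, the $0$-reducedness of $\mathrm{B}\mathcal{E}^{\mathrm{nu}}_k$, and the identification $(\mathrm{B}\mathcal{E}^{\mathrm{nu}}_k)^* \cong \Omega((\mathcal{E}^{\mathrm{nu}}_k)^*)$ (which holds thanks to the arity-wise degree-wise finite dimensionality you note).
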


\begin{proof}
Follows from Theorem \ref{thm: main theorem} and Lemma \ref{lemma: E_k tempered and quasi-planar}.
\end{proof}

\begin{remark}
The result of Theorem \ref{thm: E_k encodes} can be subsequently extended to other connected models for of $\mathbb{E}_k^{\mathrm{nu}}$-operads by the homotopy-invariance of the $\infty$-categories considered. 
\end{remark}

\subsection{Partition pre-Lie algebras and permutative formal moduli problems}
Over a field of characteristic zero $\kk$, the $\infty$-category of formal moduli problems over permutative algebras is equivalent to the $\infty$-category of pre-Lie algebras. This was shown in \cite{CCN19}, by applying Theorem \ref{thm: CCN19} to the operad $\mathcal{P}\mathrm{erm}$ which encodes permutative algebras. See \cite[Section 13.4]{LodayVallette} for more information on permutative algebras and pre-Lie algebras. The main example of formal moduli problems of permutative algebras is given by operadic deformation theory: given a coaugmented dg cooperad $\C$ and a dg operad $\mathcal{P}$, one can define 
\[
\left\{
\begin{tikzcd}[column sep=4pc,row sep=0.5pc]
\mathrm{Def}_{\Omega\C \rightarrow \mathcal{P}}^0: \mathsf{Art}~\mathcal{P}\mathrm{erm}\text{-}\mathsf{alg} \arrow[r]
&\mathcal{S} \\
A \arrow[r,mapsto] 
& \mathrm{Map}_{\mathsf{Op}}\left(\Omega \C, \mathcal{P} \otimes A \right)
\end{tikzcd}
\right.
\]

where $\mathcal{P} \otimes A$ is again a dg operad, since $A$ is a dg permutative algebra. This formal moduli problem encodes the deformations of the trivial morphism $\Omega \C \longrightarrow \mathcal{P}$. When $\mathcal{P}$ is the endomorphism operad $\mathrm{End}_V$ of some dg module $V$, deformations of this trivial morphism correspond to deformations of the trivial dg $\Omega \C$-algebra structure on $V$ (which fix the underlying chain complex $V$). Deformations of other operad morphisms can be considered by taking the fiber at a given morphism. Thus, more generally, one can consider in this way deformation of non-trivial algebraic structures which fix the underlying complex. These deformations can be related to general deformations of algebraic structures which do not fix the underlying complex, see \cite{ginot2019derived}.

\medskip

The permutative formal moduli problem $\mathrm{Def}_{\Omega\C \rightarrow \mathcal{P}}^0$ is encoded by the convolution dg pre-Lie algebra 
\[
\prod_{n \geq 0} \mathrm{Hom}_{\mathbb{S}_n}(\C(n), \mathcal{P}(n))~,
\]

where the pre-Lie bracket is constructed using the partial decomposition of $\C$ and the partial composition of $\mathcal{P}$. Operad morphisms correspond to Maurer--Cartan elements in this pre-Lie algebra; the twisted algebra by a given Maurer--Cartan element encodes deformations of the given morphism. 

\medskip

Over a field of positive characteristic, the operad $\mathcal{P}\mathrm{erm}$ is not $\mathbb{S}$-projective. Therefore in order to make sense of a permutative formal moduli problem, we can consider $\mathcal{P}\mathrm{erm} \otimes \mathcal{E}^{\mathrm{nu}}$, where $\mathcal{E}^{\mathrm{nu}}$ is the non-unital Barratt-Eccles operad. 

\begin{lemma}\label{lemma: BPerm is tempered}
The conilpotent dg cooperad $\mathrm{B}(\mathcal{P}\mathrm{erm} \otimes \mathcal{E}^{\mathrm{nu}})$ is tempered.
\end{lemma}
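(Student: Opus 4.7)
The plan is to reduce the tempered condition to the Koszul duality between $\mathcal{P}\mathrm{erm}$ and the pre-Lie operad $\mathrm{preLie}$.

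First, I would verify that $\mathrm{B}(\mathcal{P}\mathrm{erm} \otimes \mathcal{E}^{\mathrm{nu}})$ is $0$-reduced and quasi-planar. The $0$-reducedness is immediate from $\mathcal{E}^{\mathrm{nu}}(0) = 0$, which forces $(\mathcal{P}\mathrm{erm} \otimes \mathcal{E}^{\mathrm{nu}})(0) = 0$. For quasi-planarity, each $\mathcal{E}^{\mathrm{nu}}(n)$ is a quasi-free $\mathbb{S}_n$-module of the form $\kk[\mathbb{S}_n] \otimes V_n$, so the standard untwisting of the diagonal action gives
\[
(\mathcal{P}\mathrm{erm} \otimes \mathcal{E}^{\mathrm{nu}})(n) \;\cong\; \kk[\mathbb{S}_n] \otimes (\mathcal{P}\mathrm{erm}(n) \otimes V_n),
\]
exhibiting $(\mathcal{P}\mathrm{erm} \otimes \mathcal{E}^{\mathrm{nu}})(n)$ as a quasi-free $\mathbb{S}_n$-module. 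The filtration by the internal degree of $\mathcal{P}\mathrm{erm} \otimes \mathcal{E}^{\mathrm{nu}}$ refined by the tree-weight then endows $\mathrm{B}(\mathcal{P}\mathrm{erm} \otimes \mathcal{E}^{\mathrm{nu}})$ with a quasi-planar structure, exactly as in the proof of Lemma \ref{prop: Bar de Barratt-Eccles tempered}.

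The main step is the homology computation. Since $\mathcal{E}^{\mathrm{nu}}$ is $\mathbb{S}$-projective and there is a quasi-isomorphism $\mathcal{E}^{\mathrm{nu}} \qi \mathcal{C}\mathrm{om}^{\mathrm{nu}}$, the dg operad $\mathcal{P}\mathrm{erm} \otimes \mathcal{E}^{\mathrm{nu}}$ is an $\mathbb{S}$-projective resolution of $\mathcal{P}\mathrm{erm}$. The operad $\mathcal{P}\mathrm{erm}$ is binary quadratic and Koszul in the set-theoretic sense, with Koszul dual operad $\mathrm{preLie}$; its Koszul complex is arity-wise a chain complex of permutation $\mathbb{S}$-modules whose contractibility can be established combinatorially, independently of the characteristic of $\kk$. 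Consequently, the Koszul morphism induces a quasi-isomorphism of conilpotent dg cooperads
\[
\mathcal{P}\mathrm{erm}^{\text{\ac}} \qi \mathrm{B}(\mathcal{P}\mathrm{erm} \otimes \mathcal{E}^{\mathrm{nu}}),
\]
where the Koszul dual cooperad $\mathcal{P}\mathrm{erm}^{\text{\ac}}(n)$ is concentrated in homological degree $n-1$, being, up to suspension, the underlying $\mathbb{S}$-module of the linear dual of $\mathrm{preLie}(n)$.

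With this homology computation in hand, the tempered condition is immediate: for any $\kappa \geq 0$, setting $n_\kappa = \kappa + 2$, we obtain for every $n \geq n_\kappa$ and every $k \leq \kappa$ the inequality $k \leq \kappa < n-1$, and hence $\mathrm{H}_k(\mathrm{B}(\mathcal{P}\mathrm{erm} \otimes \mathcal{E}^{\mathrm{nu}}))(n) = 0$. The main subtle point, and the part which one should justify carefully, is that the derived bar construction of $\mathcal{P}\mathrm{erm}$ is computed by $\mathrm{B}$ applied to the $\mathbb{S}$-projective replacement $\mathcal{P}\mathrm{erm} \otimes \mathcal{E}^{\mathrm{nu}}$, and that the resulting homology agrees with the Koszul dual cooperad in arbitrary characteristic. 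This is precisely where the set-theoretic Koszulness of $\mathcal{P}\mathrm{erm}$, together with the good homotopical behaviour of the Hadamard product with $\mathcal{E}^{\mathrm{nu}}$, is used.
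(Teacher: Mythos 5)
Your proposal is correct and follows essentially the same route as the paper: both arguments reduce to the fact that the homology of $\mathrm{B}(\mathcal{P}\mathrm{erm} \otimes \mathcal{E}^{\mathrm{nu}}) \simeq \mathrm{B}(\mathcal{P}\mathrm{erm})$ is the shifted pre-Lie cooperad $s\mathrm{pre}\text{-}\mathcal{L}\mathrm{ie}^*$, concentrated in degree $n-1$ in arity $n$, which the paper obtains by citing the characteristic-independent Koszulness of $\mathcal{P}\mathrm{erm}$ from Chapoton--Vallette — exactly the set-theoretic Koszulness you invoke. Your additional verification of quasi-planarity and $0$-reducedness is harmless padding that the paper handles elsewhere.
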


\begin{proof}
It follows from \cite[Theorem 1.13]{ChapotonVallette} that the homology of $\mathrm{B}(\mathcal{P}\mathrm{erm} \otimes \mathcal{E}^{\mathrm{nu}})$ (which is quasi-isomorphic to $\mathrm{B}(\mathcal{P}\mathrm{erm})$) is the underlying $\mathbb{S}$-module of the shifted pre-Lie cooperad $s\mathrm{pre}\text{-}\mathcal{L}\mathrm{ie}^*$. Thus it clearly satisfies the tempered condition.
\end{proof}

\begin{theorem}
There is a canonical equivalence 

\[
\mathsf{FMP}_{\mathcal{P}\mathrm{erm} ~\otimes ~\mathcal{E}^{\mathrm{nu}}} \simeq \mathsf{dg}~\Omega(\mathcal{P}\mathrm{erm}^* \otimes (\mathcal{E}^{\mathrm{nu}})^*)\text{-}\mathsf{alg}~[\mathrm{Q.iso}^{-1}]~.
\]
\vspace{0.1pc}

between the $\infty$-categories of dg $\Omega(\mathcal{P}\mathrm{erm}^* \otimes (\mathcal{E}^{\mathrm{nu}})^*)$-algebras and of formal moduli problems over $\mathcal{P}\mathrm{erm} ~\otimes ~\mathcal{E}^{\mathrm{nu}}$-algebras.
\end{theorem}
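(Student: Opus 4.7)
The plan is to apply Theorem \ref{thm: main theorem} to the $0$-reduced conilpotent dg cooperad $\C \coloneqq \mathrm{B}(\mathcal{P}\mathrm{erm} \otimes \mathcal{E}^{\mathrm{nu}})$, and then to identify the resulting category of algebras with $\mathsf{dg}~\Omega(\mathcal{P}\mathrm{erm}^* \otimes (\mathcal{E}^{\mathrm{nu}})^*)\text{-}\mathsf{alg}$ via bar/cobar duality.

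First I would verify the hypotheses of Theorem \ref{thm: main theorem} for $\C$. The dg operad $\mathcal{P}\mathrm{erm} \otimes \mathcal{E}^{\mathrm{nu}}$ is $\mathbb{S}$-projective, since the underlying $\mathbb{S}$-module of $\mathcal{E}^{\mathrm{nu}}$ is quasi-free, and it is $0$-reduced because both factors are. Temperedness of $\C$ is exactly the content of Lemma \ref{lemma: BPerm is tempered}. Quasi-planarity is obtained by the same mechanism as in the proof of Lemma \ref{lemma: E_k tempered and quasi-planar}: each $(\mathcal{P}\mathrm{erm} \otimes \mathcal{E}^{\mathrm{nu}})(n)$ is a quasi-free dg $\mathbb{S}_n$-module, so the underlying graded cooperad of $\C$ is planar, and a quasi-planar filtration is furnished by combining the internal homological degree of $\mathcal{E}^{\mathrm{nu}}$ with the tree-weight filtration on the bar construction.

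Second, the canonical counit $\Omega\C \qi \mathcal{P}\mathrm{erm} \otimes \mathcal{E}^{\mathrm{nu}}$ is a quasi-isomorphism of $\mathbb{S}$-projective dg operads, so the invariance lemma at the beginning of Section 1 yields $\mathsf{FMP}_{\Omega\C} \simeq \mathsf{FMP}_{\mathcal{P}\mathrm{erm} \otimes \mathcal{E}^{\mathrm{nu}}}$, and Theorem \ref{thm: main theorem} then gives an equivalence
\[
\mathsf{FMP}_{\mathcal{P}\mathrm{erm} \otimes \mathcal{E}^{\mathrm{nu}}} \;\simeq\; \mathsf{dg}~\C^*\text{-}\mathsf{alg}~[\mathrm{Q.iso}^{-1}]~.
\]
It remains to identify $\C^*$ with $\Omega(\mathcal{P}\mathrm{erm}^* \otimes (\mathcal{E}^{\mathrm{nu}})^*)$. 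Both $\mathcal{P}\mathrm{erm}$ and $\mathcal{E}^{\mathrm{nu}}$ are arity-wise and degree-wise finite dimensional — $\mathcal{P}\mathrm{erm}(n)$ has dimension $n$ concentrated in degree zero and $\mathcal{E}^{\mathrm{nu}}(n)_k$ is the finite-dimensional chain complex of a simplicial set — hence the same holds for their Hadamard product and, since only finitely many reduced trees contribute at fixed arity and fixed homological degree, for its bar construction. In this finite-dimensional setting, linear duality exchanges coinvariants with invariants and the cofree cooperad comonad with the free operad monad, producing a canonical isomorphism of dg operads
\[
\mathrm{B}(\mathcal{P}\mathrm{erm} \otimes \mathcal{E}^{\mathrm{nu}})^* \;\cong\; \Omega\bigl(\mathcal{P}\mathrm{erm}^* \otimes (\mathcal{E}^{\mathrm{nu}})^*\bigr),
\]
and in particular an isomorphism of the corresponding categories of dg algebras, which gives the claimed equivalence of $\infty$-categories.

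The main obstacle is really just the careful bookkeeping of the arity-wise and degree-wise finite dimensionality that is needed for the linear dual of a bar construction to coincide strictly (rather than merely up to quasi-isomorphism) with the cobar of the linear dual; all the remaining steps are formal consequences of the machinery already developed in the paper.
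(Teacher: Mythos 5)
Your proposal is correct and follows essentially the same route as the paper, whose proof is the one-line invocation of Theorem \ref{thm: main theorem} together with Lemma \ref{lemma: BPerm is tempered}. The extra details you supply --- quasi-planarity of $\mathrm{B}(\mathcal{P}\mathrm{erm} \otimes \mathcal{E}^{\mathrm{nu}})$ (which in the paper is already covered by Proposition \ref{prop: cofibrant replacement for operads}) and the arity-wise, degree-wise finite-dimensionality needed to identify $\mathrm{B}(\mathcal{P}\mathrm{erm} \otimes \mathcal{E}^{\mathrm{nu}})^*$ with $\Omega(\mathcal{P}\mathrm{erm}^* \otimes (\mathcal{E}^{\mathrm{nu}})^*)$ --- are exactly the bookkeeping the paper leaves implicit, and they check out.
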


\begin{proof}
Follows from Theorem \ref{thm: main theorem} and Lemma \ref{lemma: BPerm is tempered}.
\end{proof}

\begin{remark}
Adapting the arguments of \cite[Proposition 4.51]{pdalgebras}, one can show that $\mathcal{S}\mathrm{urj}^* \otimes s\mathrm{pre}\text{-}\mathcal{L}\mathrm{ie}$ is also a model for $\Omega(\mathcal{P}\mathrm{erm}^* \otimes (\mathcal{E}^{\mathrm{nu}})^*)$, where $\mathcal{S}\mathrm{urj}^*$ is the dual of the surjections cooperad constructed in the appendix of \textit{op.cit}.
\end{remark}

In order to define the analogue of the functor $\mathrm{Def}_{\Omega\C \rightarrow \mathcal{P}}^0$ in positive characteristic, one can replace dg operads by some kinds of operads up to homotopy ; for instance dg algebras over the coloured operads $\mathcal{O}\mathrm{p} \otimes \mathcal{E}^{\mathrm{nu}}$ (where $\mathcal{O}\mathrm{p}$ is the coloured operad that encodes non-unital dg operads). These dg operads up to homotopy are tensored over $\mathcal{P}\mathrm{erm} \otimes \mathcal{E}^{\mathrm{nu}}$-algebras. For two operads $\mathcal P, \mathcal Q$, we can then define a formal moduli problem over $\mathcal{P}\mathrm{erm} \otimes \mathcal{E}^{\mathrm{nu}}$-algebras in an analogous way
\[
\begin{tikzcd}[column sep=2.5pc,row sep=0pc]
A \arrow[r,mapsto]
&\Map{\catdgalg{\mathcal{O}\mathrm{p} ~\otimes~ \mathcal{E}^{\mathrm{nu}}}}{\mathcal Q}{\mathcal P \otimes A}~.
\end{tikzcd}
\]

Replacing $\mathcal Q$ by the cobar construction $\Omega \C$ of a quasi-planar conilpotent dg cooperad $\C$, the convolution construction between $\C$ and $\PP$ should encode this formal moduli problem, and therefore should be endowed with a dg partition pre-Lie algebra structure that extends the classical pre-Lie structure.


\bibliographystyle{alpha}
\bibliography{bibax}

\def\cprime{$'$} \def\cprime{$'$}
\begin{thebibliography}{CPRNW20}

\bibitem[BCN21]{pdalgebras}
Lukas Brantner, Ricardo Campos, and Joost Nuiten.
\newblock {PD} operads and explicit partition {L}ie algebras, \href{https://arxiv.org/abs/2104.03870}{arxiv.2104.03870}, 2021.

\bibitem[BF04]{BergerFresse}
Clemens Berger and Benoit Fresse.
\newblock Combinatorial operad actions on cochains.
\newblock {\em Math. Proc. Camb. Philos. Soc.}, 137(1):135--174, 2004.

\bibitem[BM19]{brantnermathew}
Lukas Brantner and Akhil Mathew.
\newblock Deformation theory and partition {L}ie algebras, \href{https://arxiv.org/abs/1904.07352}{arxiv.1904.07352}, 2019.

\bibitem[CCN22]{CCN19}
Damien Calaque, Ricardo Campos, and Joost Nuiten.
\newblock Moduli problems for operadic algebras.
\newblock {\em Journal of the London Mathematical Society}, 106(4):3450--3544, 2022.

\bibitem[CG21]{CalaqueGrivaux}
Damien Calaque and Julien Grivaux.
\newblock Formal moduli problems and formal derived stacks.
\newblock In {\em Derived algebraic geometry}, pages 85--145. Paris: Soci{\'e}t{\'e} Math{\'e}matique de France (SMF), 2021.

\bibitem[CPRNW20]{campos2020lie}
Ricardo Campos, Dan Petersen, Daniel Robert-Nicoud, and Felix Wierstra.
\newblock Lie, associative and commutative quasi-isomorphism, \href{https://arxiv.org/pdf/1904.03585.pdf}{arxiv.1904.03585}, 2020.

\bibitem[CV06]{ChapotonVallette}
F.~Chapoton and B.~Vallette.
\newblock Pointed and multi-pointed partitions of type {{\(A\)}} and {{\(B\)}}.
\newblock {\em J. Algebr. Comb.}, 23(4):295--316, 2006.

\bibitem[FMFT19]{nilpotentLiestuff}
Yves F{\'e}lix, Jos{\'e}~M. Moreno-Fern{\'a}ndez, and Daniel Tanr{\'e}.
\newblock Lie models for nilpotent spaces.
\newblock {\em Manuscr. Math.}, 159(1-2):161--170, 2019.

\bibitem[Fre00]{FresseDP}
Benoit Fresse.
\newblock On the homotopy of simplicial algebras over an operad.
\newblock {\em Trans. Am. Math. Soc.}, 352(9):4113--4141, 2000.

\bibitem[Fre04]{FressePartitionPoset}
Benoit Fresse.
\newblock Koszul duality of operads and homology of partition posets.
\newblock In {\em Homotopy theory: relations with algebraic geometry, group cohomology, and algebraic \(K\)-theory. Papers from the international conference on algebraic topology, Northwestern University, Evanston, IL, USA, March 24--28, 2002}, pages 115--215. Providence, RI: American Mathematical Society (AMS), 2004.

\bibitem[Fre09]{Fresse}
Benoit Fresse.
\newblock {\em Modules over operads and functors}, volume 1967 of {\em Lect. Notes Math.}
\newblock Berlin: Springer, 2009.

\bibitem[Fre11]{FresseEn}
Benoit Fresse.
\newblock Koszul duality of {{\(E_{n}\)}}-operads.
\newblock {\em Sel. Math., New Ser.}, 17(2):363--434, 2011.

\bibitem[Get09]{Getzler09}
Ezra Getzler.
\newblock Lie theory for nilpotent {$L_\infty$}-algebras.
\newblock {\em Ann. of Math. (2)}, 170(1):271--301, 2009.

\bibitem[GL22]{linearcoalgebras}
Brice~Le Grignou and Damien Lejay.
\newblock Homotopy theory of linear cogebras, v2, \href{https://arxiv.org/abs/1803.01376}{arXiv.1803.01376}.
\newblock 2022.

\bibitem[GLST20]{lazarev}
Ai~Guan, Andrey Lazarev, Yunhe Sheng, and Rong Tang.
\newblock Review of deformation theory {II}: a homotopical approach.
\newblock {\em Adv. Math., Beijing}, 49(3):278--298, 2020.

\bibitem[Gri20]{presentationalgebras}
Brice~Le Grignou.
\newblock Presentations of algebras in chain complexes, \href{https://arxiv.org/abs/2011.14357}{arxiv.2011.14357}, 2020.

\bibitem[Gri21]{grignou2021algebraic}
Brice~Le Grignou.
\newblock Algebraic operads up to homotopy, \href{https://arxiv.org/abs/1707.03465}{arxiv.1707.03465}, 2021.

\bibitem[Gri22]{grignou2022mapping}
Brice~Le Grignou.
\newblock {Mapping coalgebras II: Operads, \href{https://arxiv.org/abs/2208.14395}{arXiv:2208.14395}}, 2022.

\bibitem[GRiL23]{premierpapier}
Brice~Le Grignou and Victor {R}oca {i}~{L}ucio.
\newblock Homotopical operadic calculus in postive characteristic, \href{https://arxiv.org/abs/2310.13095}{arXiv.2310.13095}, 2023.

\bibitem[GY19]{ginot2019derived}
Gregory Ginot and Sinan Yalin.
\newblock Derived deformation theory of algebraic structures, \href{https://arxiv.org/abs/1910.07255}{arXiv:1910.07255}, 2019.

\bibitem[Hen18]{Hennion}
Benjamin Hennion.
\newblock Tangent {Lie} algebra of derived {Artin} stacks.
\newblock {\em J. Reine Angew. Math.}, 741:1--45, 2018.

\bibitem[HH13]{HarperHess}
John~E. {Harper} and Kathryn {Hess}.
\newblock {Homotopy completion and topological Quillen homology of structured ring spectra}.
\newblock {\em {Geom. Topol.}}, 17(3):1325--1416, 2013.

\bibitem[Hin01]{Hinich}
Vladimir Hinich.
\newblock {DG} coalgebras as formal stacks.
\newblock {\em J. Pure Appl. Algebra}, 162(2-3):209--250, 2001.

\bibitem[KS58]{KodairaSpencer58}
Kunihiko Kodaira and Donald~C. Spencer.
\newblock On deformations of complex analytic structures. {I}, {II}.
\newblock {\em Ann. of Math. (2)}, 67:328--466, 1958.

\bibitem[LG19]{unitalalgebras}
Brice Le~Grignou.
\newblock Homotopy theory of unital algebras.
\newblock {\em Algebraic and Geometric Topology}, 19(19):1541–1618, 2019.

\bibitem[Lur11]{Lurie11}
Jacob Lurie.
\newblock {\em Derived Algebraic Geometry X: Formal Moduli Problems, \href{https://people.math.harvard.edu/~lurie/papers/DAG-X.pdf}{https://people.math.harvard.edu/~lurie/papers/DAG-X.pdf}}.
\newblock 2011.

\bibitem[Lur17]{HigherAlgebra}
Jacob Lurie.
\newblock Higher algebra, \href{https://www.math.ias.edu/~lurie/papers/HA.pdf}{https://www.math.ias.edu/~lurie/papers/HA.pdf}, 2017.

\bibitem[LV12]{LodayVallette}
Jean-Louis Loday and Bruno Vallette.
\newblock {\em Algebraic operads}, volume 346 of {\em Grundlehren Math. Wiss.}
\newblock Berlin: Springer, 2012.

\bibitem[Nui19]{Liealgebroids}
Joost Nuiten.
\newblock Koszul duality for {Lie} algebroids.
\newblock {\em Adv. Math.}, 354:63, 2019.
\newblock Id/No 106750.

\bibitem[Pos11]{TwoKinds}
Leonid Positselski.
\newblock {\em Two kinds of derived categories, {Koszul} duality, and comodule-contramodule correspondence}, volume 996 of {\em Mem. Am. Math. Soc.}
\newblock Providence, RI: American Mathematical Society (AMS), 2011.

\bibitem[{Pri}10]{Pridham}
J.~P. {Pridham}.
\newblock {Unifying derived deformation theories}.
\newblock {\em {Adv. Math.}}, 224(3):772--826, 2010.

\bibitem[RiL22a]{absolutealgebras}
Victor {R}oca {i}~{L}ucio.
\newblock Absolute algebras, contramodules, and duality squares, \href{https://arxiv.org/abs/2209.09833}{arxiv:2209.09833}, 2022.

\bibitem[RiL22b]{lucio2022curved}
Victor {R}oca {i}~{L}ucio.
\newblock Curved operadic calculus, \textit{to appear in Bulletin de la SMF}, \href{https://arxiv.org/abs/2201.07155}{arxiv.2201.07155}, 2022.

\bibitem[RiL23]{lietheoryp}
Victor {R}oca {i}~{L}ucio.
\newblock Higher {L}ie theory in positive characteristic, \href{https://arxiv.org/abs/2306.07829}{arXiv:2306.07829 }, 2023.

\bibitem[Spi01]{Spitzweck}
Markus Spitzweck.
\newblock {\em Operads, algebras and modules in model categories and motives}.
\newblock Bonn: Univ. Bonn. Mathematisch-Naturwissenschaftliche Fakult{\"a}t (Dissertation), 2001.

\bibitem[Tak77]{takeushi}
Mitsuhiro Takeuchi.
\newblock Formal schemes over fields.
\newblock {\em Commun. Algebra}, 5:1483--1528, 1977.

\bibitem[To{\"e}17]{Toen}
Bertrand To{\"e}n.
\newblock Problems of formal modules.
\newblock In {\em S\'eminaire Bourbaki. Volume 2015/2016. Expos\'es 1104--1119. Avec table par noms d'auteurs de 1948/49 \`a 2015/16}, pages 199--244, ex. Paris: Soci{\'e}t{\'e} Math{\'e}matique de France (SMF), 2017.

\end{thebibliography}
\end{document}